\documentclass{amsart}
\usepackage[toc,page]{appendix}
\usepackage{geometry}
\usepackage[all,cmtip]{xy}
\usepackage{amsmath}
\usepackage{amsfonts}
\usepackage{amssymb}
\usepackage{mathrsfs}   
\usepackage{amsthm}
\usepackage{xcolor}
\usepackage{cite}
\usepackage{stmaryrd}
\usepackage{graphicx}
\usepackage{pgf}
\usepackage{eepic}
\usepackage{pstricks}
\usepackage[all,cmtip]{xy}
\usepackage{epsfig}
\usepackage{fancyhdr}
\usepackage[backref=page]{hyperref}
\renewcommand*{\backref}[1]{}
\renewcommand*{\backrefalt}[4]{({%
    \ifcase #1 Not cited.%
          \or page~#2%
          \else pages #2%
    \fi%
    })}
    
\setlength{\footskip}{30pt}

\setcounter{tocdepth}{1}

 \usepackage{mathptmx}  

\geometry{
 a4paper,
 total={210mm,297mm},
 left=30mm,
 right=30mm,
 top=40mm,
 bottom=40mm,
 }
 \linespread{1.2}
 
 \pagestyle{fancy}
 \fancyhf{}

\theoremstyle{plain}

\newtheorem{theorem}{Theorem}[section]

\newtheorem{proposition}[theorem]{Proposition}
\newtheorem{corollary}[theorem]{Corollary}

\newtheorem{lemma}[theorem]{Lemma}

\newtheorem*{conjectureu}{Conjecture}

\theoremstyle{definition}

\newtheorem{definition}[theorem]{Definition}

\theoremstyle{remark}
\newtheorem{remark}[theorem]{Remark}

\newcommand{\Z}{{\mathbb Z}}
\newcommand{\Q}{{\mathbb Q}}

\newcommand{\F}{{\mathbb F}}

\renewcommand{\P}{{\mathbb P}}
\newcommand{\A}{{\mathbb A}}

\newcommand{\cur}[1]{\mathcal{#1}}

\newcommand{\rig}{\mathrm{rig}}

\newcommand{\spf}[1]{\mathrm{Spf}\left(#1\right)}

\title{Incarnations of Berthelot's conjecture}

\author{Christopher Lazda}
       \address{Dipartimento di Matematica Pura e Applicata \\
        Torre Archimede, Via Trieste, 63 \\ 
        35121 Padova \\ 
        Italia}
       \email{lazda@math.unipd.it}

\fancyhead[RO]{C. Lazda}
\fancyhead[LE]{Berthelot's conjecture}
\fancyfoot[C]{\thepage}

\setlength{\footskip}{30pt}

\begin{document}

\begin{abstract} In this article we give a survey of the various forms of Berthelot's conjecture and some of the implications between them. By proving some comparison results between push-forwards of overconvergent isocrystals and those of arithmetic $\cur{D}$-modules, we manage to deduce some cases of the conjecture from Caro's results on the stability of overcoherence under push-forward via a smooth and proper morphism of varieties. In particular, we show that Ogus' convergent push-forward of an overconvergent $F$-isocrystal under a smooth and projective morphism is overconvergent.
\end{abstract}

\maketitle 

\tableofcontents

\section{Introduction}

In many `reasonable' cohomology theories, one expects that the relative cohomology of a `fibration' $f: X\rightarrow S$ behaves as nicely as possible, that is that the higher direct image sheaves should be locally constant, and their fibres should be the cohomology groups of the fibres of $f$. For example, if one takes $f$ to be a smooth and proper morphism of algebraic varieties, then the higher direct images for de Rham cohomology (in characteristic zero) or $\ell$-adic \'{e}tale cohomology (in characteristic different from $\ell$) are `local systems' in the appropriate sense, with the expected fibres. Berthelot's conjecture is a version of this general philosophy for $p$-adic cohomology: roughly speaking, it states that if we take a smooth and proper morphism $f:X\rightarrow S$ of varieties in characteristic $p$, and an overconvergent $F$-isocrystal $E$ on $X$, then the higher direct images $\mathbf{R}^qf_*E$ should be overconvergent $F$-isocrystals on $S$. 

According to the  various different perspectives that one can take on both the coefficient objects of $p$-adic cohomology and their push-forwards, there are many different ways to state Berthelot's conjecture, some stronger, some weaker, and some (currently) logically independent, and the aim of this short article is two-fold. Firstly, it is to act as a brief survey of the various forms that Berthelot's conjecture can take, and of the special cases and impactions between them all that are currently known. Secondly, it is to show some new (but reasonably straightforward) comparisons between different constructions of push-forwards in $p$-adic cohomology, which will then allow us to deduce some new cases of certain versions of Berthelot's conjecture. While the general form of Berthelot's conjecture still remains very open, the version of it that we manage to prove here still has some interesting applications, see for example \cite{ES15b} or \cite{Pal15b}.

In the first couple of sections, therefore, we review various definitions of coefficients objects and their push-forwards, concentrating on four main perspectives: that of convergent isocrystals, overconvergent isocrystals (in two different ways) and overholonomic $\cur{D}$-modules. For each of these perspectives on coefficients objects, there is a corresponding way to phrase Berthelot's conjecture, and we are thus led to consider 4 types of conjecture. Viewing overconvergent isocrystals simply as modules with integrable connection on some frame leads to the `B' type conjectures, if we view them as modules with overconvergent stratifications, or more generally as collections of realisations with comparison morphisms, then the most natural formulation gives what we call the `S' type conjectures. If we include Frobenius structures and view them as a full subcategory of convergent $F$-isocrystals then we obtain `O' type conjectures, and finally, considering them as certain kinds of overholonomic $\cur{D}$-modules gives `C' type conjectures.

While there are reasonably clear implications between the `B', `S' and `O' type conjectures, the lack of good comparisons between push-forwards of $\cur{D}$-modules and push-forwards of overconvergent isocrystals in general means that there are few straightforward implications between the `C' conjectures and the others. Since it is the `C' conjectures for which, thanks to Caro's work, most cases are known (in particular, all quasi-projective cases) it is therefore especially disappointing that it is these `C' conjectures that are the most difficult to relate to the others. Our rather modest contribution here is to note a few special cases of such comparison theorems between push-forwards, which enables us to deduce `O' type conjectures with a reasonably respectable level of generality (namely, for smooth \emph{projective} morphisms $X\rightarrow S$), and `B' type conjectures with a somewhat less respectable level of generality (see Corollary \ref{bp} for a precise statement).

The main difficulty in extending these results is the somewhat indirect comparison between overconvergent isocrystals and overcoherent isocrystals (which are certain kinds of arithmetic $\cur{D}$-modules). The equivalence of categories constructed by Caro makes fundamental use of both resolution and gluing arguments, and therefore if one is to obtain the required comparisons between push-forwards, one needs to know certain cases of finiteness and base change for rigid higher direct images in order to push these objects through the construction - in other words, one needs to know certain cases of `S' or `B' type conjectures before one starts! The reasons that we could get our arguments to work here is essentially by bootstrapping up the few cases in which one has a direct comparison between overconvergent and overcoherent isocrystals as far as possible, which in `O' type conjectures does in fact give reasonable results, but is still rather inadequate for `B' or `S' type conjectures. One would hope that a direct comparison would lead to an easy implication from the `C' type conjectures proved by Caro to the conjectures of the other types.

\subsection*{Notations and conventions}

Throughout, $k$ will be a perfect field of characteristic $p>0$, $\cur{V}$ will be a complete DVR of mixed characteristic with residue field $k$ and fraction field $K$, and $\pi$ will be a uniformiser for $\cur{V}$. A $k$-variety will mean a separated $k$-scheme of finite type, and a formal $\cur{V}$-scheme will mean a $\pi$-adic formal scheme, separated and of finite type over $\spf{\cur{V}}$. If $X$ is an $\F_p$ scheme, absolute Frobenius will mean some fixed power of the $p$-power absolute Frobenius on $X$. For any $k$-variety $X$ we will denote the reduced subscheme by $X_\mathrm{red}$. For any formal $\cur{V}$-scheme $\frak{X}$, we will denote the special fibre by $\frak{X}_0$, and the generic fibre by $\frak{X}_K$, this is a rigid space over $K$ in the sense of Tate. If $\cur{F}$ is an abelian sheaf on some site, we will denote by $\cur{F}_\Q$ the tensor product $\cur{F}\otimes_{\Z}\Q$.

\section{Categories of isocrystals}\label{coeffs}

In this section, we review the various categories of coefficients that are used in $p$-adic cohomology, and the various comparison theorems between them. We start with the category of convergent isocrystals, following Ogus \cite{Ogu84}.

Let $X$ be a $k$-scheme. The convergent site of $X/\cur{V}$ consists of pairs $(\frak{T},z_\frak{T})$ where $\frak{T}$ is a flat formal $\cur{V}$-scheme and $z_\frak{T}:(\frak{T}_0)_\mathrm{red}\rightarrow X$ is a morphism of $k$-varieties. The topology is induced by the Zariski topology on $\frak{T}$, and the associated topos is denoted $(X/\cur{V})_\mathrm{conv}$. We will usually drop $z_\frak{T}$ from the notation, and refer to an object of the convergent site simply as $\frak{T}$. We can describe sheaves $E$ on this site as `realisations' $E_\frak{T}$ and transition morphisms 
$$ g^{-1}E_\frak{T}\rightarrow E_{\frak{T}'}
$$
associated to $g:\frak{T}'\rightarrow \frak{T}$ in the usual way. In particular we have the canonical sheaf $\cur{K}_{X/V}$ whose realisation on $\frak{T}$ is $\cur{O}_{\frak{T},\Q}$.

\begin{definition} A convergent isocrystal on $X$ is a $\cur{K}_{X/\cur{V}}$-module $E$ such that each realisation $E_\frak{T}$ is a coherent $\cur{O}_{\frak{T},\Q}$-module, and the \emph{linearised} transition morphism
$$ g^*E_\frak{T}\rightarrow E_{\frak{T}'}
$$
associated to any $g:\frak{T}'\rightarrow \frak{T}$ is an isomorphism. The category of such objects is denoted $\mathrm{Isoc}(X/K)$. 
\end{definition}

\begin{proposition}[\cite{Ogu84}, Theorem 2.15] \label{convcon} Suppose that $\frak{X}$ is a smooth formal $\cur{V}$-scheme with special fibre $X$. Then the realisation functor $E\mapsto E_\frak{X}$ induces a fully faithful functor from $\mathrm{Isoc}(X/K)$ to the category of coherent $\cur{O}_{\frak{X},\Q}$-modules with integrable connection.
\end{proposition}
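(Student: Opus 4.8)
First I would upgrade the realisation $E_\frak{X}$, which is coherent over $\cur{O}_{\frak{X},\Q}$ by definition, to a module with integrable connection. Let $\frak{X}(1)$ be the first infinitesimal neighbourhood of the diagonal in $\frak{X}\times_{\cur{V}}\frak{X}$. Since $\frak{X}$ is smooth, $\cur{O}_{\frak{X}(1)}$ is an extension of $\cur{O}_\frak{X}$ by the locally free sheaf $\Omega^1_{\frak{X}/\cur{V}}$, so $\frak{X}(1)$ is a flat formal $\cur{V}$-scheme whose reduced special fibre is $X$; it is therefore an object of the convergent site, carrying two projections $p_1,p_2\colon\frak{X}(1)\to\frak{X}$. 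The linearised transition isomorphism of $E$ applied to $p_1,p_2$ gives an isomorphism $p_1^*E_\frak{X}\isomto p_2^*E_\frak{X}$ restricting to the identity along the diagonal, i.e.\ a connection $\nabla$ on $E_\frak{X}$; running the analogous argument over the second infinitesimal neighbourhood (equivalently, using the cocycle identity for the transition isomorphisms over the triple self-product) shows that $\nabla$ is integrable. Functoriality of all these constructions makes $E\mapsto(E_\frak{X},\nabla)$ the claimed functor.

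The key lemma, which drives both faithfulness and fullness, is that for \emph{any} object $\frak{T}$ of the convergent site there is, Zariski-locally on $\frak{T}$, a morphism $g\colon\frak{T}\to\frak{X}$ \emph{in} the convergent site. Indeed, since $\frak{X}$ is smooth its special fibre $X$ is reduced, so such a $g$ is exactly a morphism of formal $\cur{V}$-schemes inducing $z_\frak{T}$ on reduced special fibres, i.e.\ a lift of $z_\frak{T}\colon(\frak{T}_0)_\mathrm{red}\to X\hookrightarrow\frak{X}$ along the closed immersion $(\frak{T}_0)_\mathrm{red}\hookrightarrow\frak{T}$. The ideal cutting out $(\frak{T}_0)_\mathrm{red}$ contains $\pi$ and, as $\frak{T}_0$ is Noetherian, has a power inside $(\pi)$; hence $\cur{O}_\frak{T}$ is complete for it, and the formal smoothness of $\frak{X}$ over $\cur{V}$ produces $g$ (the relevant obstruction lives in an $H^1$ of a quasi-coherent sheaf on the affine $(\frak{T}_0)_\mathrm{red}$, so after passing to an affine cover of $\frak{T}$ it vanishes). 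For any such $g$, the transition isomorphism of $E$ supplies a canonical identification $g^*E_\frak{X}\isomto E_\frak{T}$.

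Faithfulness is then immediate: if $\phi\colon E\to E'$ has $\phi_\frak{X}=0$, covering an arbitrary $\frak{T}$ by opens carrying maps $g$ identifies $\phi_\frak{T}$ with $g^*\phi_\frak{X}=0$ locally, so $\phi=0$. For fullness, take a horizontal $\psi\colon(E_\frak{X},\nabla)\to(E'_\frak{X},\nabla')$; on each open of a cover of a given $\frak{T}$ choose a lift $g$ and transport $g^*\psi$ through the isomorphisms $g^*E_\frak{X}\isomto E_\frak{T}$ and $g^*E'_\frak{X}\isomto E'_\frak{T}$ to obtain a map $E_\frak{T}\to E'_\frak{T}$. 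The point is that this map does not depend on $g$: two lifts $g_1,g_2$ agree modulo the ideal of $(\frak{T}_0)_\mathrm{red}$, the two resulting identifications of $E_\frak{T}$ differ by the Taylor isomorphism assembled from $\nabla$ and the difference $g_2-g_1$, and horizontality of $\psi$ means it commutes with this isomorphism. Independence of $g$ glues the local maps to a morphism $\psi_\frak{T}$, and the same computation shows the $\psi_\frak{T}$ are compatible with the transition morphisms of $E$ and $E'$, so they assemble into a morphism $E\to E'$ realising $\psi$.

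I expect the only genuine obstacle to be concealed in the last step: that the Taylor (exponential) series attached to the connection of a \emph{convergent} isocrystal actually converges $\pi$-adically over every enlargement $\frak{T}$ — in particular over affinoid generic fibres — so that the two lifts $g_1,g_2$ really do give the same map. This convergence is precisely the analytic content built into Ogus' convergent site and is the one input here that is not purely formal; everything else is standard deformation theory together with Zariski descent.
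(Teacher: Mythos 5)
The paper offers no proof of this proposition --- it is imported verbatim from Theorem 2.15 of \cite{Ogu84} --- so the only meaningful comparison is with Ogus's argument, and your outline is essentially a reconstruction of it: the connection is extracted from the realisation on the first infinitesimal neighbourhood of the diagonal (which is indeed flat over $\cur{V}$ with reduced special fibre $X$, hence an object of the site), integrability comes from the cocycle condition, and both faithfulness and fullness are reduced, via formal smoothness of $\frak{X}$ and the topological nilpotence of the ideal of $(\frak{T}_0)_{\mathrm{red}}$ in $\cur{O}_{\frak{T}}$, to controlling the comparison isomorphism attached to two lifts $g_1,g_2\colon\frak{T}\to\frak{X}$. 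All of that is correct. The one caveat is that the step you defer is not only ``the Taylor series converges'': you also need to know that the isomorphism $g_1^*E_{\frak{X}}\isomto E_{\frak{T}}\isomto g_2^*E_{\frak{X}}$ handed to you by the isocrystal structure \emph{is} the Taylor isomorphism of $\nabla$, which does not follow from convergence alone. In Ogus's treatment this is obtained by factoring $(g_1,g_2)\colon\frak{T}\to\frak{X}\times_{\cur{V}}\frak{X}$ through the universal enlargements of the diagonal and propagating the identification from the infinitesimal neighbourhoods (where the cocycle condition forces it) by a limit argument; without some such device the well-definedness of $\psi_{\frak{T}}$ in your fullness step, and hence the whole proof, remains open. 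As a reduction of the proposition to that single analytic input your sketch is sound, but that input is the substance of Ogus's theorem rather than a routine verification.
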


\begin{remark} There is also a version of this proposition where we embed $X$ into a smooth formal $\cur{V}$-scheme. We will see this appearing later on.
\end{remark}

These objects are functorial in both $X$ and $\cur{V}$, in that a commutative diagram
$$ \xymatrix{ Y \ar[r] \ar[d] & X \ar[d] \\
\spf{\cur{W}} \ar[r] & \spf{\cur{V}} 
}
$$
induces a pullback functor $\mathrm{Isoc}(X/K)\rightarrow \mathrm{Isoc}(Y/L)$ (where $L=\mathrm{Frac}(\cur{W})$). In particular, after choosing a lift to $\cur{V}$ of the absolute Frobenius of $k$, we can talk about convergent isocrystals with Frobenius structure, the category of such objects being denoted $F\textrm{-}\mathrm{Isoc}(X/K)$, and there is an analogue of Proposition \ref{convcon} if $\frak{X}$ is equipped with a lift of absolute Frobenius.

Next we introduce (partially) overconvergent isocrystals, following Berthelot \cite{Ber96b} and Le Stum \cite{LS07}. Before we do so we need to introduce pairs and frames, as well as Berthelot's functor $j^\dagger$ of overconvergent sections.

\begin{definition} A $k$-pair consists of an open embedding $X\rightarrow \overline{X}$ of $k$-varieties. A $\cur{V}$-frame consists of a $k$-pair $(X,\overline{X})$ and a closed embedding $\overline{X}\rightarrow \frak{X}$ of $\overline{X}$ into a formal $\cur{V}$-scheme. We will say that a pair/frame is proper if $\overline{X}$ is, and that a frame is smooth if $\frak{X}$ is smooth in a neighbourhood of $X$. A morphism of pairs/frames is just a commutative diagram, and we will say a morphism of pairs $(X,\overline{X})\rightarrow (S,\overline{S})$ is Cartesian if the associated commutative square is. Smoothness/properness of a morphism of pairs or frames is defined as before. If $(X,\overline{X})$ is a pair, then a frame over $(X,\overline{X})$ is a frame $(Y,\overline{Y},\frak{Y})$ together with a morphism $(Y,\overline{Y})\rightarrow (X,\overline{X})$ of pairs.
\end{definition}

If we have a frame $(X,\overline{X},\frak{X})$, then we can consider the specialisation map $\mathrm{sp}:\frak{X}_K\rightarrow \frak{X}_0$, and for any locally closed subscheme $V\subset \frak{X}_0$ we define the tube
$$ ]V[_\frak{X}:= \mathrm{sp}^{-1}(V).
$$
If $]X[_\frak{X}\subset V \subset ]\overline{X}[_\frak{X}$ is an open subset of $]\overline{X}[_\frak{X}$, then we will call $V$ a strict neighbourhood of $]X[_\frak{X}$ if the covering
$$ ]\overline{X}[_\frak{X} = V \cup ]\overline{X}\setminus X[_\frak{X}
$$
is admissible for the $G$-topology. For any sheaf $\cur{F}$ on $]\overline{X}[_\frak{X}$ we define
$$ j_X^\dagger\cur{F}:=\mathrm{colim}_V j_{V*}j_V^{-1}\cur{F}
$$
where the colimit is taken over all strict neighbourhoods $V$, and $j_V:V\rightarrow ]\overline{X}[_\frak{X}$ denotes the inclusion. If $E$ is a $j^\dagger_X\cur{O}_{]\overline{X}[_\frak{X}}$-module, then an integrable connection on $E$ is just an integrable connection on $E$ as an $\cur{O}_{]\overline{X}[_\frak{X}}$-module. 

\begin{definition} An overconvergent isocrystal on the pair $(X,\overline{X})$ consists of a collection $E_\frak{U}$ of coherent $j^\dagger_Y\cur{O}_{]\overline{Y}[_\frak{Y}}$-modules, one for each frame $(Y,\overline{Y},\frak{Y})$ over $(X,\overline{X})$, together with isomorphisms $u^*E_\frak{Y}\rightarrow E_\frak{Z}$ associated to each morphism $u:(Z,\overline{Z},\frak{Z})\rightarrow (Y,\overline{Y},\frak{Y})$ of frames, satisfying the usual cocycle conditions. The category of such objects is denoted $\mathrm{Isoc}^\dagger((X,\overline{X})/K)$, and we refer to the $E_\frak{Y}$ as the realisations of $E$.
\end{definition}

\begin{proposition}[\cite{LS07}, Proposition 7.2.13] Suppose that $(X,\overline{X},\frak{X})$ is a smooth frame. Then the realisation functor $E\mapsto E_\frak{X}$ induces a fully faithful functor from $\mathrm{Isoc}^\dagger((X,\overline{X})/K)$ to the category $\mathrm{MIC}((X,\overline{X},\frak{X})/K)$ of coherent $j^\dagger\cur{O}_{]\overline{X}[_\frak{X}}$-modules with integrable connection.
\end{proposition}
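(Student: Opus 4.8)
The plan is to follow Ogus' strategy for Proposition~\ref{convcon}, adapted to the overconvergent setting; the key observation is that, since the frame is smooth, the iterated fibre products $\frak{X}(m):=\frak{X}\times_{\cur{V}}\cdots\times_{\cur{V}}\frak{X}$ (with $m+1$ factors), equipped with the diagonal embedding of $\overline{X}$ and the identity morphism of pairs, are again frames over $(X,\overline{X})$, and all the projections between them are morphisms of such frames. Applying the transition isomorphisms of an overconvergent isocrystal $E$ to the two projections $\frak{X}(1)\rightrightarrows\frak{X}$ produces an isomorphism $\epsilon\colon p_1^*E_\frak{X}\isomto p_2^*E_\frak{X}$ of coherent $j^\dagger\cur{O}$-modules on $]\overline{X}[_{\frak{X}(1)}$ which restricts to the identity along the diagonal and satisfies the cocycle condition on $]\overline{X}[_{\frak{X}(2)}$; in other words, $E_\frak{X}$ acquires an overconvergent stratification. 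Restricting $\epsilon$ to the infinitesimal neighbourhoods of the diagonal, and using that $\frak{X}$ is smooth and $K$ has characteristic zero so that Taylor's formula applies, this stratification amounts to an integrable connection $\nabla$ on $E_\frak{X}$; the assignment $E\mapsto(E_\frak{X},\nabla)$ is evidently functorial, giving the claimed functor to $\mathrm{MIC}((X,\overline{X},\frak{X})/K)$.

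To prove full faithfulness I would factor this functor through the category $\cur{C}$ of coherent $j^\dagger\cur{O}_{]\overline{X}[_\frak{X}}$-modules equipped with an overconvergent stratification, and check that both $\mathrm{Isoc}^\dagger((X,\overline{X})/K)\to\cur{C}$ and the forgetful functor $\cur{C}\to\mathrm{MIC}((X,\overline{X},\frak{X})/K)$ are fully faithful. The first is fully faithful by descent: given any frame $(Y,\overline{Y},\frak{Y})$ over $(X,\overline{X})$, the composite $\overline{Y}\to\overline{X}\to\frak{X}$ together with $\overline{Y}\to\frak{Y}$ gives a closed embedding $\overline{Y}\hookrightarrow\frak{X}\times_{\cur{V}}\frak{Y}$, so $(Y,\overline{Y},\frak{X}\times_{\cur{V}}\frak{Y})$ is a frame over $(X,\overline{X})$ dominating both $(X,\overline{X},\frak{X})$ (first projection) and $(Y,\overline{Y},\frak{Y})$ (second projection); chasing the transition isomorphisms of $E$ through these two morphisms shows that $E_\frak{Y}$, and the restriction to it of any morphism of isocrystals, are determined by $(E_\frak{X},\epsilon)$ and by the morphism's value on $\frak{X}$, via pullback along $]\overline{Y}[_{\frak{X}\times\frak{Y}}\to ]\overline{X}[_\frak{X}$ followed by descent along $]\overline{Y}[_{\frak{X}\times\frak{Y}}\to ]\overline{Y}[_\frak{Y}$ with $\epsilon$ as descent datum. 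For the second functor, a horizontal morphism automatically commutes with the infinitesimal stratification by Taylor's formula (again using $\mathrm{char}\,K=0$), and an overconvergent stratification is determined by the infinitesimal one it induces: working locally on affinoid opens of strict neighbourhoods, where the tube of the diagonal is cut out by a regular sequence of coordinates and coherent sheaves are separated for the corresponding adic filtration, the reductions of $\epsilon$ modulo the powers of this ideal are functorially recovered from $\nabla$, hence so is $\epsilon$ itself.

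The delicate point, feeding into both steps, is the analytic geometry of tubes, strict neighbourhoods and products: one must know that the smoothness of $\frak{X}$ makes the tube of the diagonal inside $]\overline{X}[_{\frak{X}(m)}$ a genuine tubular (polydisc-bundle) neighbourhood of $]\overline{X}[_\frak{X}$, compatibly with $j^\dagger$, so that the classical characteristic-zero dictionary between infinitesimal stratifications and integrable connections transports from formal to rigid-analytic neighbourhoods, and so that the relevant structure sheaves are suitably separated along the diagonal. This is precisely where the smoothness hypothesis on the frame enters the argument.
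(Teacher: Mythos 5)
The paper does not actually prove this statement: it is imported verbatim from \cite{LS07}, Proposition 7.2.13. Your argument --- realising $E$ on the products $\frak{X}(m)$ to get an overconvergent stratification, factoring the realisation functor through $\mathrm{Strat}^\dagger((X,\overline{X},\frak{X})/K)$, and then using the strong fibration theorem for the smooth frame together with the characteristic-zero Taylor/separatedness dictionary to get full faithfulness of both factors --- is precisely Le Stum's (and Berthelot's) standard proof, and it is correct.
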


Of course, as before, we have functoriality as well as a version with Frobenius structures, denoted $F\textrm{-}\mathrm{Isoc}^\dagger((X,\overline{X})/K)$. The category $(F\textrm{-})\mathrm{Isoc}(X/K)$ is local on $X$, and $(F\textrm{-})\mathrm{Isoc}^\dagger((X,\overline{X})/K)$ is local on both $X$ and $\overline{X}$. When the pair $(X,\overline{X})$ is proper, $(F\textrm{-})\mathrm{Isoc}^\dagger((X,\overline{X})/K)$ depends only on $X$, we will therefore write $(F\textrm{-})\mathrm{Isoc}^\dagger(X/K)$.

\begin{definition} Let $(X,\overline{X},\frak{X})$ be a smooth frame. Then an overconvergent stratification on a coherent $j_X^\dagger\cur{O}_{]\overline{X}[_\frak{X}}$-module $E$ is an isomorphism
$$ p_2^*E\cong p_1^*E
$$
of $j_X^\dagger\cur{O}_{]\overline{X}[_{\frak{X}^2}}$-modules, where $\overline{X}$ is embedded in $\frak{X}^2$ via the diagonal, and $p_i:\frak{X}^2\rightarrow \frak{X}$ are the two projections. This isomorphism is subject to the usual conditions, for example it should be the identity after being pulled back via $\Delta:\frak{X}\rightarrow \frak{X}^2$, and should satisfy a cocycle condition on $\frak{X}^3$. The category of coherent $j_X^\dagger\cur{O}_{]\overline{X}[_\frak{X}}$-modules with overconvergent stratification is denoted by $\mathrm{Strat}^\dagger((X,\overline{X},\frak{X})/K)$. There is an obvious restriction functor $$\mathrm{Isoc}^\dagger((X,\overline{X})/K)\rightarrow \mathrm{Strat}^\dagger((X,\overline{X},\frak{X})/K)$$ which is an equivalence by Proposition 7.2.2 of \cite{LS07}. 
\end{definition}

By restricting to frames of the form $(\frak{Y}_0,\frak{Y}_0,\frak{Y})$ we also get a natural functor
$$ (F\textrm{-})\mathrm{Isoc}^\dagger((X,X)/K) \rightarrow (F\textrm{-})\mathrm{Isoc}(X/K).
$$

\begin{proposition}[\cite{Ber96b}, 2.3.4] This functor is an equivalence of categories. \end{proposition}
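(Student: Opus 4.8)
The plan is to pass to a local embedded model and to recognise both categories as one and the same category of coherent modules with connection on a tube; the point is that for the pair $(X,X)$ there is no ``overconvergence room''. Since both $(F\text{-})\mathrm{Isoc}^\dagger((X,X)/K)$ and $(F\text{-})\mathrm{Isoc}(X/K)$ are local on $X$, I may assume $X$ affine and choose a closed immersion $X\hookrightarrow\frak{P}$ into a smooth affine formal $\cur{V}$-scheme; write $]X[\,:=\,]X[_\frak{P}$ for the associated tube. The key elementary observation is that for the pair $(X,X)$ one has $\overline{X}\setminus X=\emptyset$, so the only strict neighbourhood of $]X[$ inside $]\overline{X}[_\frak{P}=]X[$ is $]X[$ itself; hence $j^\dagger_X=\iden$ on $]X[$, and likewise on $]X[_{\frak{P}^n}$ for every $n$.

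Granting this, the overconvergent side becomes transparent: by the equivalence $\mathrm{Isoc}^\dagger((X,X)/K)\isomto\mathrm{Strat}^\dagger((X,X,\frak{P})/K)$ (Proposition 7.2.2 of \cite{LS07}, recalled above) together with the previous observation, $\mathrm{Isoc}^\dagger((X,X)/K)$ is equivalent to the category of coherent $\cur{O}_{]X[}$-modules $E$ equipped with an isomorphism $p_2^*E\isomto p_1^*E$ on $]X[_{\frak{P}^2}$ satisfying the cocycle condition on $]X[_{\frak{P}^3}$, i.e.\ with an honest (``radius one'') stratification, the composite equivalence being simply ``restrict to the frame $(X,X,\frak{P})$''.

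On the convergent side I would invoke the embedded description of $\mathrm{Isoc}(X/K)$ --- the version of Proposition \ref{convcon} for a closed immersion of $X$ into a smooth formal scheme alluded to in the Remark, which goes back to \cite{Ogu84} --- identifying $\mathrm{Isoc}(X/K)$ with the full subcategory of coherent $\cur{O}_{]X[}$-modules with integrable connection whose associated (a priori only formal) Taylor isomorphism already converges on $]X[_{\frak{P}^2}$. But such a module is precisely a coherent $\cur{O}_{]X[}$-module carrying a radius-one stratification, the stratification being the Taylor isomorphism and its cocycle identity being automatic from integrability of the connection. Thus both $\mathrm{Isoc}(X/K)$ and $\mathrm{Isoc}^\dagger((X,X)/K)$ are identified, via ``realise on the frame $(X,X,\frak{P})$'', with the same module category, and one checks that these identifications intertwine the functor of the statement with the identity; this gives the equivalence over the chosen affine chart. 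The identifications are compatible with shrinking $X$ and with changing $\frak{P}$, so they glue to the asserted equivalence, and a lift of absolute Frobenius to $\frak{P}$ is carried along by both realisation functors, yielding the $F$-linear version at no extra cost.

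The step I expect to be the real obstacle is exactly this matching on the convergent side: one must pin down that a convergent isocrystal, in its embedded incarnation on $]X[$, is characterised by convergence of the Taylor stratification on the radius-one tube $]X[_{\frak{P}^2}$, and that the resulting dictionary is compatible with the restriction-to-frames functor of the statement --- not merely that the two categories are abstractly equivalent. This is where Ogus's comparison between the convergent site and its linearisations along embeddings does the real work; everything else (locality, the $\mathrm{Strat}^\dagger$-equivalence, triviality of $j^\dagger_X$ for the pair $(X,X)$, and the Frobenius bookkeeping) is routine.
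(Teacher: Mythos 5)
Your argument is correct and follows the same route as the proof the paper cites (Berthelot's): reduce by locality to an embedded affine model, kill $j^\dagger$ since $\overline{X}\setminus X=\emptyset$, identify the overconvergent side with stratifications on the full tube of the diagonal via the $\mathrm{Strat}^\dagger$ equivalence, and match this with the embedded characterisation of convergent isocrystals as modules with connection whose Taylor isomorphism converges on $]X[_{\frak{P}^2}$ --- which is exactly the ``key ingredient'' the paper's subsequent Remark points to. Nothing further is needed.
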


\begin{remark} This give an answer as to what the analogue of Proposition \ref{convcon} should be when $X$ is not smooth over $k$: convergent isocrystals form a full subcategory of the category of coherent $\cur{O}_{]X[_\frak{X}}$-modules with integrable connection, for any closed embedding $X\rightarrow \frak{X}$ into a smooth formal $\cur{V}$-scheme. In fact, it is this characterisation which is the key ingredient in the proof of the previous proposition.
\end{remark}

For any pair $(X,\overline{X})$ we get a canonical restriction functor 
$$ (F\textrm{-})\mathrm{Isoc}^\dagger((X,\overline{X})/K) \rightarrow (F\textrm{-})\mathrm{Isoc}(X/K)
$$
and have the following theorem of Caro and Kedlaya.

\begin{theorem}[\cite{Car11}, Th\'{e}or\`{e}me 2.2.1] The restriction functor
$$  F\textrm{-}\mathrm{Isoc}^\dagger((X,\overline{X})/K)\rightarrow F\textrm{-}\mathrm{Isoc}(X/K)
$$
is fully faithful.
\end{theorem}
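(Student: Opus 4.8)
The plan is to treat faithfulness and fullness separately: the former is elementary and needs no Frobenius structure, while the latter carries all the content. For faithfulness, work Zariski-locally on $\overline{X}$ and choose a smooth frame $(X,\overline{X},\mathfrak{X})$; by Le Stum's proposition the realisation functor embeds $\mathrm{Isoc}^\dagger((X,\overline{X})/K)$ into the coherent $j^\dagger_X\cur{O}_{]\overline{X}[_{\mathfrak{X}}}$-modules with integrable connection, and the underlying convergent isocrystal is recovered from $E_{\mathfrak{X}}$ by restriction to a neighbourhood of the tube $]X[_{\mathfrak{X}}$. Since a section of $j^\dagger_X\cur{O}$ which vanishes on the tube already vanishes on some strict neighbourhood, and a morphism of coherent $j^\dagger$-modules with connection is determined near the tube, any morphism of overconvergent isocrystals becoming zero in $F\textrm{-}\mathrm{Isoc}(X/K)$ was already zero.

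For fullness the Frobenius structure is indispensable --- without it the restriction functor is not full in general. The first step is to reduce to a statement about horizontal sections. The categories at hand are rigid tensor categories with internal Homs $\mathcal{H}om(E,F)=E^\vee\otimes F$ compatible with restriction, so, writing $G=\mathcal{H}om(E,F)$ and $\mathbf{1}$ for the constant isocrystal, one has $\mathrm{Hom}(E,F)=\mathrm{Hom}(\mathbf{1},G)$ on both sides, and $\mathrm{Hom}(\mathbf{1},G)$ is exactly the module of $\nabla$-horizontal, $\varphi$-equivariant global sections of $G$. Hence, with faithfulness supplying uniqueness, it suffices to prove: for every overconvergent $F$-isocrystal $G$ on $(X,\overline{X})$, every $\nabla$-horizontal and $\varphi$-equivariant global section of the associated convergent $F$-isocrystal extends to an overconvergent global section of $G$. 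On a smooth frame, this says that such a section, a priori only defined and convergent on the tube $]X[_{\mathfrak{X}}$, in fact converges on a strict neighbourhood.

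Overconvergence of the section is a condition near the boundary $Z:=\overline{X}\setminus X$, and is local there. By noetherian induction on $\dim Z$, together with de Jong's alterations (or resolution of singularities) and the descent properties of $(F\textrm{-})$isocrystals along proper surjections, one reduces to the case where $Z$ is a smooth divisor; passing then to the generic point of a component of $Z$, the problem becomes local: one has a $(\varphi,\nabla)$-module over a Robba-type ring attached to a relative annulus, and must show that a $\nabla$-horizontal, $\varphi$-equivariant element lying in the ring of functions on the closed tube already lies in the overconvergent (bounded Robba) subring. I expect this last, purely local, step to be the main obstacle; it is precisely where the Frobenius structure is used, via the interplay between Frobenius structures and radii of convergence --- Dwork's transfer theorem and the Christol--Dwork and Robba estimates, as developed by Tsuzuki and Kedlaya in the theory of (unit-root and general) overconvergent $F$-isocrystals. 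Granting this local input, one propagates the conclusion back up through the d\'{e}vissage to obtain the overconvergent extension of the section, which yields fullness and finishes the proof.
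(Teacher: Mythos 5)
The paper offers no proof of this statement: it is imported wholesale from Caro (Th\'{e}or\`{e}me 2.2.1 of \cite{Car11}), which in turn rests on Kedlaya's full faithfulness theorem for overconvergent $F$-isocrystals on smooth varieties. Measured against that actual proof, your outline reproduces the architecture correctly: faithfulness is indeed elementary, fullness does reduce via internal Homs to extending horizontal Frobenius-equivariant sections across the boundary, and the d\'{e}vissage by alterations down to a local statement over a Robba-type ring is how Kedlaya and Caro proceed. But the entire mathematical content of the theorem is concentrated in precisely the step you flag and then grant yourself: that a $\nabla$-horizontal, $\varphi$-equivariant element of the base change of a $(\varphi,\nabla)$-module to the completed (``convergent'') ring already lies in the module over the Robba ring. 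This is not delivered by Dwork's transfer theorem or the Christol--Dwork--Robba estimates; the known argument requires Tsuzuki's full faithfulness theorem for \emph{unit-root} $F$-isocrystals together with the slope filtration machinery (equivalently, the $p$-adic local monodromy theorem) to reduce the general case to the unit-root one, and the unit-root case itself is proved by a ramification-theoretic argument via the correspondence with $p$-adic representations of the fundamental group. Without that input the argument does not close, so as written the proposal is a correct road map with the destination missing.

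Two smaller points. Your assertion that without Frobenius structures the restriction functor ``is not full in general'' contradicts the remark immediately following the theorem in the paper: full faithfulness without Frobenius is \emph{conjectured} and open, not false --- no counterexample is known; what is true is only that the known proof is unavailable in that setting. Also, the theorem is stated for an arbitrary pair $(X,\overline{X})$ with $X$ not assumed smooth; Caro's contribution in \cite{Car11} is exactly the reduction of this general case to Kedlaya's smooth one, and the bookkeeping needed to carry overconvergence back down the alteration is itself a nontrivial descent theorem (Th\'{e}or\`{e}me 2.1.3 of \emph{loc.\ cit.}, used elsewhere in this paper), not a formal consequence of ``descent properties of isocrystals along proper surjections.''
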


\begin{remark}This is also conjectured to hold without Frobenius structures, but this is not currently known.
\end{remark}

Finally, the most complicated category of coefficients is that of Berthelot's arithmetic $\cur{D}$-modules, as developed by Caro. Since the definitions and constructions are so involved, we will content ourselves with giving a brief overview, referring to Berthelot and Caro's work for the details.

If $\frak{P}$ is a smooth formal $\cur{V}$-scheme, then we let $\cur{D}^\dagger_{\frak{P},\Q}$ denote the ring of overconvergent differential operators on $\frak{P}$, as constructed in \S2 of \cite{Ber96a}, and $D^b_\mathrm{coh}(\cur{D}^\dagger_{\frak{P},\Q})$ its (bounded, coherent) derived category. For any closed subscheme $T\subset \frak{P}_0$, we have functors $\mathbf{R}\underline{\Gamma}^\dagger_T$ and $(^\dagger T)$ of `sections with support in $T$' and `sections overconvergent along $T$' respectively, and an exact triangle
$$ \mathbf{R}\underline{\Gamma}_T^\dagger\cur{E} \rightarrow \cur{E} \rightarrow (^\dagger T)\cur{E} \rightarrow \mathbf{R}\underline{\Gamma}_T^\dagger\cur{E}[1]
$$
for any $\cur{E}\in D^b_\mathrm{coh}(\cur{D}^\dagger_{\frak{P},\Q})$. We let $D^b_\mathrm{surcoh}(\cur{D}^\dagger_{\frak{P},\Q})\subset D^b_\mathrm{coh}(\cur{D}^\dagger_{\frak{P},\Q})$ denote the full subcategory of overcoherent objects, as defined in \S3 of \cite{Car04}. 

We will also need a variant: if $T\subset \frak{P}_0$ is a divisor of the special fibre of $\frak{P}$, we may consider the ring $\cur{D}^\dagger_{\frak{P}}(^\dagger T)_\Q$ of differential operators with overconvergent singularities along $T$, as defined in \S4 of \cite{Ber96a}, as well as the categories $D^b_\mathrm{coh}(\cur{D}^\dagger_{\frak{P}}(^\dagger T)_\Q)$ and $D^b_\mathrm{surcoh}(\cur{D}^\dagger_{\frak{P}}(^\dagger T)_\Q)$ as before. There is a forgetful functor 
$$ D^b_\mathrm{coh}(\cur{D}^\dagger_{\frak{P}}(^\dagger T)_\Q) \rightarrow D^b_\mathrm{coh}(\cur{D}^\dagger_{\frak{P},\Q})
$$
which is fully faithful and with essential image those objects $\cur{E}$ such that $\cur{E}\cong(^\dagger T)\cur{E}$ (Lemme 1.2.1 4 of \cite{Car15a}). Be warned, however, that it does not respect the notion of overcoherence in general.

Now let $(X,\overline{X})$ be a $k$-pair, and assume that we have an embedding $\overline{X}\hookrightarrow \tilde{\frak{P}}$ into a smooth and proper formal $\cur{V}$-scheme, and a divisor $\tilde{T}\subset \tilde{\frak{P}}_0$ such that $X=\overline{X}\setminus \tilde{T}$. Let $\frak{P}$ be an open formal subscheme of $\tilde{\frak{P}}$ such that $\overline{X}\rightarrow \tilde{\frak{P}}$ factors through a closed embedding $\overline{X}\rightarrow \frak{P}$, and let $T=\tilde{T}\cap \frak{P}$. Then the full subcategory of $D^b_\mathrm{surcoh}(\cur{D}^\dagger_{\frak{P}}(^\dagger T))$ consisting of objects with support in $\overline{X}$, i.e. such that $\cur{E}\cong \mathbf{R}\underline{\Gamma}^\dagger_{\overline{X}}\cur{E}$, is independent of all choices (i.e. only depends on $(X,\overline{X})$), we therefore denote it $D^b_\mathrm{\mathrm{surcoh}}(\cur{D}^\dagger_{(X,\overline{X})/K})$ and refer to it as the category of overcoherent $\cur{D}^\dagger$-modules on $(X,\overline{X})/K$. When $X=\overline{X}$ we will denote it instead by  $D^b_\mathrm{\mathrm{surcoh}}(\cur{D}_{X/K})$. When $\overline{X}$ is proper, it depends only on $X$ and we will therefore instead write $D^b_\mathrm{surcoh}(\cur{D}^\dagger_{X/K})$.

\begin{remark} We formalise the hypothesis used on pairs in the previous paragraph as follows: A couple $(Y,\overline{Y})$ is said to be `properly $d$-realisable' if there exists smooth and proper formal $\cur{V}$-scheme $\frak{P}$, a (not necessarily closed) immersion $\overline{Y}\rightarrow \frak{P}$ and a divisor $D$ of $\frak{P}_0$ such that $Y=\overline{Y}\setminus D$.
\end{remark}

\begin{lemma} \label{annoying} Let $\frak{S}$ be a smooth affine formal $\cur{V}$-scheme, with special fibre $S$. Then there exists a canonical equivalence of categories
$$ D^b_{\mathrm{surcoh}}(\cur{D}^\dagger_{\frak{S,\Q}}) \cong D^b_{\mathrm{surcoh}}(\cur{D}_{S/K}).
$$
More generally, if $D\subset \overline{S}:=\frak{S}_0$ is a divisor, and $S=\overline{S}\setminus D$, then there exists a canonical equivalence of categories
$$ D^b_{\mathrm{surcoh}}(\cur{D}^\dagger_{\frak{S}}(^\dagger D)_\Q) \cong D^b_{\mathrm{surcoh}}(\cur{D}^\dagger_{(S,\overline{S})/K}).
$$
\end{lemma}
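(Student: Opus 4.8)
The plan is to realise both categories inside a common ambient smooth formal scheme and then appeal to the overcoherent version of the Berthelot--Kashiwara theorem. Since $\frak{S}$ is smooth and affine, first choose a closed immersion $\frak{S}\hookrightarrow\widehat{\A}^N_\cur{V}$ and compose it with the open immersion $\widehat{\A}^N_\cur{V}\hookrightarrow\widehat{\P}^N_\cur{V}$ into the $\pi$-adic completion of projective space, which is smooth and proper over $\cur{V}$. Taking $\tilde{\frak{P}}=\widehat{\P}^N_\cur{V}$, with $\tilde{T}$ the hyperplane at infinity (so that $\frak{P}=\widehat{\A}^N_\cur{V}$ and $T=\emptyset$), exhibits $(S,S)$ as properly $d$-realisable; since $\cur{D}^\dagger_{\widehat{\A}^N_\cur{V}}(^\dagger\emptyset)_\Q=\cur{D}^\dagger_{\widehat{\A}^N_\cur{V},\Q}$, the defining description of $D^b_\mathrm{surcoh}(\cur{D}_{S/K})$ together with its independence of all choices identifies this category with the full subcategory of $D^b_\mathrm{surcoh}(\cur{D}^\dagger_{\widehat{\A}^N_\cur{V},\Q})$ of objects $\cur{E}$ with $\cur{E}\cong\mathbf{R}\underline{\Gamma}^\dagger_S\cur{E}$.

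I would then invoke Caro's overcoherent form of the Berthelot--Kashiwara theorem (\cite{Car04}): for a closed immersion $i\colon\frak{S}\hookrightarrow\frak{P}$ of smooth formal $\cur{V}$-schemes, $i_+$ preserves overcoherence and induces an equivalence between $D^b_\mathrm{surcoh}(\cur{D}^\dagger_{\frak{S},\Q})$ and the full subcategory of $D^b_\mathrm{surcoh}(\cur{D}^\dagger_{\frak{P},\Q})$ of objects supported on $\frak{S}_0$. Applying this with $\frak{P}=\widehat{\A}^N_\cur{V}$ and combining with the previous step produces the first equivalence. Its canonicity follows by the usual argument: two choices of embedding are compared through the diagonal immersion of $\frak{S}$ into the product of the two ambient schemes over $\spf{\cur{V}}$, using the transitivity of $i_+$ and the independence of the target category from choices --- exactly as in the proof that $D^b_\mathrm{surcoh}(\cur{D}^\dagger_{(X,\overline{X})/K})$ is well defined.

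For the second equivalence I would run the same argument while keeping track of the divisor. After enlarging $N$ and possibly replacing the ambient space by a product $\widehat{\A}^M_\cur{V}\times\widehat{\P}^m_\cur{V}$, a standard argument --- using that $D$, being a divisor of the affine scheme $\overline{S}=\frak{S}_0$, is cut out on $\overline{S}$ by a section of a globally generated line bundle, and choosing the embedding compatibly --- produces a divisor $\hat{D}$ of $\frak{P}_0$ whose intersection with $\overline{S}$ is exactly $D$. Completing $\frak{P}$ to a smooth proper formal scheme and adjoining the complementary divisor at infinity to the closure of $\hat{D}$ then realises $(S,\overline{S})$ with $T=\hat{D}$, so that $D^b_\mathrm{surcoh}(\cur{D}^\dagger_{(S,\overline{S})/K})$ is the full subcategory of $D^b_\mathrm{surcoh}(\cur{D}^\dagger_{\frak{P}}(^\dagger\hat{D})_\Q)$ of objects $\cur{E}$ with $\cur{E}\cong\mathbf{R}\underline{\Gamma}^\dagger_{\overline{S}}\cur{E}$. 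One then applies the version of Berthelot--Kashiwara ``with overconvergent singularities along a divisor'': for $i\colon\frak{S}\hookrightarrow\frak{P}$ and the divisor $\hat{D}$, whose intersection with $\frak{S}_0$ is $D$, $i_+$ gives an equivalence
$$ D^b_\mathrm{surcoh}(\cur{D}^\dagger_{\frak{S}}(^\dagger D)_\Q)\isomto\{\cur{E}\in D^b_\mathrm{surcoh}(\cur{D}^\dagger_{\frak{P}}(^\dagger\hat{D})_\Q):\cur{E}\cong\mathbf{R}\underline{\Gamma}^\dagger_{\overline{S}}\cur{E}\}, $$
and canonicity follows as before.

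The bookkeeping --- translating between the ``$(X,\overline{X})$'' description and a fixed realisation, and choosing the realisations compatibly --- is routine, if a little tedious; the real content is imported from Caro's work. The main obstacle is precisely that $i_+$ along a closed immersion of smooth formal schemes preserves \emph{overcoherence}, and not merely coherence --- this is the delicate input that makes $D^b_\mathrm{surcoh}(\cur{D}^\dagger_{(X,\overline{X})/K})$ well defined in the first place --- together with, for the second part, the need to extend $D$ to a divisor restricting to $D$ on the nose rather than to $D$ plus spurious components.
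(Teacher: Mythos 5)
Your proposal is correct and follows essentially the same route as the paper: realise $D^b_{\mathrm{surcoh}}(\cur{D}_{S/K})$ (resp.\ its divisor variant) as the subcategory of overcoherent complexes on an ambient affine space supported on $S$, and conclude by the Berthelot--Kashiwara theorem (Th\'{e}or\`{e}me 3.1.6 of \cite{Car04}). The paper's proof is just a terser version of the same argument; your extra care with extending the divisor and checking canonicity via the diagonal fills in details the paper leaves implicit.
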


\begin{proof} Note that this is not immediate from the definitions! The first is not difficult: if we choose an affine embedding $\frak{S}\hookrightarrow \widehat{\A}^n_\cur{V}$ then $D^b_{\mathrm{surcoh}}(\cur{D}_{S/K})$ can be identified with the full subcategory of $D^b_{\mathrm{surcoh}}(\cur{D}^\dagger_{\frak{S,\Q}})$ consisting of objects with support in $\frak{S}$, the claimed equivalence therefore follows from the Berthelot-Kashiwara theorem (Th\'{e}or\`{e}me 3.1.6 of \cite{Car04}). The second is proved entirely similarly.
\end{proof}

Whenever $X$ is smooth, we will let $\mathrm{Isoc}^{\dagger\dagger}((X,\overline{X})/K) \subset D^b_\mathrm{surcoh}(\cur{D}^\dagger_{(X,\overline{X})/K})$ denote the full subcategory of `overcoherent isocrystals' as in D\'{e}finition 1.2.4 of \cite{Car15a}, these are certain kinds of overcoherent $\cur{D}$-modules, and we will denote by $D^b_\mathrm{isoc}(\cur{D}^\dagger_{(X,\overline{X})/K})$ the full subcategory of $D^b_\mathrm{surcoh}(\cur{D}^\dagger_{(X,\overline{X})/K})$ objects whose cohomology sheaves are overcoherent isocrystals. We have a canonical equivalence of categories
$$ \mathrm{sp}_{(X,\overline{X}),+} :\mathrm{Isoc}^\dagger((X,\overline{X})/K) \rightarrow \mathrm{Isoc}^{\dagger\dagger}((X,\overline{X})/K)
$$
whose description we will need in the following two special cases.

\begin{itemize} \item Assume that $\frak{X}$ is a smooth formal $\cur{V}$-scheme, so that we may identify objects of $\mathrm{Isoc}(X/K)$ with certain $\cur{O}_{\frak{X},\Q}$-modules with integrable connection, and objects of $D^b_\mathrm{surcoh}(\cur{D}_{X/K})$ with a full subcategory of $D^b_\mathrm{coh}(\cur{D}^\dagger_{\frak{X},\Q})$. Then $\mathrm{sp}_{(X,X),+}$ simply takes a module with integrable connection to the corresponding $\cur{D}$-module, as in Proposition 4.1.4 of \cite{Ber96a}.  \label{sp1}
\item Assume that $\frak{X}$ is a smooth formal $\cur{V}$-scheme, that $T\subset \overline{X}:=\frak{X}_0$ is a divisor, and set $X=\overline{X}\setminus T$. Let $\mathrm{sp}_*:\frak{X}_K\rightarrow\frak{X}$ be the specialisation map. Then thanks to Proposition 4.4.2 of \cite{Ber96a}, $\mathrm{sp}_*$ induces an equivalence of categories between $\mathrm{Isoc}^\dagger((X,\overline{X})/K)$ and certain coherent $\cur{O}_{\frak{X},\Q}(^\dagger T)$-modules with integrable connection. Since we may identify objects of $D^b_\mathrm{surcoh}(\cur{D}^\dagger_{(X,\overline{X})/K})$ with a full subcategory of $D^b_\mathrm{coh}(\cur{D}^\dagger_{\frak{X}}(^\dagger T))$, the functor $\mathrm{sp}_{(X,\overline{X}),+}$ is again that taking an integrable connection to the associated $\cur{D}$-module, as in Th\'{e}or\`{e}me 4.4.5 of \emph{loc. cit}. \label{sp2}
\end{itemize}

\begin{remark} One can also construct  $D^b_\mathrm{isoc}(\cur{D}^\dagger_{(X,\overline{X})})$ for non-smooth $X$, although the definition is more involved, and it is not clear whether or not we have the equivalence of categories 
$$ \mathrm{Isoc}^\dagger((X,\overline{X})/K) \cong \mathrm{Isoc}^{\dagger\dagger}((X,\overline{X})/K)
$$
in this case.
\end{remark}

To define $D^b_\mathrm{\mathrm{surcoh}}(\cur{D}^\dagger_{(X,\overline{X})/K})$ and $D^b_\mathrm{isoc}(\cur{D}^\dagger_{(X,\overline{X})/K})$ in general we use Zariski descent: by localising on $\overline{X}$ and $X$ we may assume that we are in the `properly $d$-realisable' situation as above, and the corresponding categories glue. For more details on how to do this, see for example Remarque 3.2.10 of \cite{Car04}.

\section{Push-forwards in $p$-adic cohomology}

For the various different categories of $p$-adic coefficients, there are different ways of viewing higher direct images, and in this section we will review the basic constructions
 of such push-forwards. Again, we start with convergent isocrystals. 

Suppose that $X$ is a $k$-variety, $\frak{S}$ is a formal $\cur{V}$-scheme, and $f:X\rightarrow \frak{S}$ is a morphism of formal $\cur{V}$-schemes. Then we can define the category of convergent isocrystals on $X/\frak{S}$ exactly as in \S\ref{coeffs}, only taking formal schemes with a given structure morphism to $\frak{S}$. There is a canonical morphism of topoi
$$ (X/\cur{V})_\mathrm{conv} \rightarrow (X/\frak{S})_\mathrm{conv}
$$
induced by `forgetting' the structure morphism to $\frak{S}$; push-forward is exact and sends isocrystals to isocrystals. We then consider the morphism of topoi
$$ f_{\frak{S},\mathrm{conv}}:(X/\frak{S})_\mathrm{conv} \rightarrow \frak{S}_\mathrm{Zar}
$$
induced by the functor taking an open subset of $\frak{S}$ to the object $(f^{-1}\frak{U},\frak{U})$  of $(X/\frak{S})_\mathrm{conv}$. For any convergent isocrystal on $X/\frak{S}$, we can therefore consider the $\cur{O}_{\frak{S},\Q}$-modules
$$  \mathbf{R}^qf_{\frak{S},\mathrm{conv}*}E\in \frak{S}_\mathrm{Zar}.
$$

\begin{proposition} \label{derham} Suppose that $f:\frak{X}\rightarrow\frak{S}$ is a smooth morphism of formal $
\cur{V}$-schemes lifting $f:X\rightarrow \frak{S}$. 
\begin{enumerate} 
\item There is an equivalence of categories $E\mapsto E_\frak{X}$ between convergent isocrystals on $X/\frak{S}$ and a full subcategory of the category of coherent $\cur{O}_{\frak{X},\Q}$-modules with integrable connection relative to $\frak{S}$.
\item For any convergent isocrystal $E$ on $X/\frak{S}$, there is a canonical isomorphism
$$ \mathbf{R}^qf_{\frak{S},\mathrm{conv}*}E \cong \mathbf{R}^qf_{*} ( E_\frak{X} \otimes\Omega^*_{\frak{X}/\frak{S}}).
$$
\end{enumerate}
\end{proposition}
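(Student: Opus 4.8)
The plan is to reduce the statement to the classical comparison between convergent cohomology and de Rham cohomology (relative to a base), following Ogus' original arguments, and then to handle the relative setting by base change. First I would address part (1). The point is that the convergent site of $X/\frak{S}$ is, by definition, the same kind of object as the convergent site of $X/\cur{V}$, except that all formal schemes come equipped with a structure map to $\frak{S}$; in particular a lift $f:\frak{X}\rightarrow\frak{S}$ of $f:X\rightarrow\frak{S}$ makes $(\frak{X},z_\frak{X})$ (with $z_\frak{X}$ the reduction of the identity) into an object of $(X/\frak{S})_\mathrm{conv}$. One then runs verbatim the proof of Proposition \ref{convcon}: the realisation functor $E\mapsto E_\frak{X}$ lands in coherent $\cur{O}_{\frak{X},\Q}$-modules equipped with the integrable connection coming from the two projections of the formal completion of $\frak{X}\times_\frak{S}\frak{X}$ along the diagonal (so that the connection is \emph{relative} to $\frak{S}$), and Ogus' argument that this functor is fully faithful, with essential image those modules satisfying the appropriate convergence condition, goes through unchanged since nothing in it interacts with the absolute base beyond the existence of enough tubes. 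The only thing one genuinely needs to check is that smoothness of $\frak{X}$ over $\frak{S}$ (rather than over $\cur{V}$) suffices for the local description of tubes and for the Poincar\'e lemma relative to $\frak{S}$, which it does.

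For part (2), I would first treat the case where $X\to\frak{S}$ is itself liftable, which is exactly the hypothesis of the Proposition. The strategy is the one used for the absolute comparison theorem: cover $(X/\frak{S})_\mathrm{conv}$ by the objects obtained from a simplicial formal scheme built out of $\frak{X}$. Concretely, if $\frak{X}^{(n)}$ denotes the formal completion along the diagonal of the $(n+1)$-fold fibre product $\frak{X}\times_\frak{S}\cdots\times_\frak{S}\frak{X}$, then the cosimplicial system $\frak{X}^{(\bu)}$ gives a hypercovering of the final object of the localised topos $(X/\frak{S})_\mathrm{conv}$, and the realisation of $E$ on this system, together with the fact that the linearised transition maps are isomorphisms, identifies $\mathbf{R}f_{\frak{S},\mathrm{conv}*}E$ with the total complex of $f_*$ applied to $E_{\frak{X}^{(\bu)}}$. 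This double complex is, by the formal Poincar\'e lemma for $\frak{X}/\frak{S}$ (which says the completed de Rham complex of $\frak{X}^{(n+1)}/\frak{X}^{(n)}$ is a resolution of $\cur{O}$), quasi-isomorphic to $E_\frak{X}\otimes\Omega^*_{\frak{X}/\frak{S}}$; this is precisely the relative analogue of Ogus \cite{Ogu84}, and the computation of $\mathbf{R}^q$ follows by taking cohomology. I would write this out citing the relevant statement of \cite{Ogu84} rather than reproving the Poincar\'e lemma.

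The main obstacle is that $X\to\frak{S}$ need not lift globally, only the Proposition assumes such a lift exists; so in the body of the text the cleanest route, given that the hypothesis \emph{is} that $f:\frak{X}\to\frak{S}$ lifts $f:X\to\frak{S}$, is to keep everything global and simply invoke the liftable case — there is no gluing to do. One still has to be slightly careful that the isomorphism of part (2) is canonical and independent of auxiliary choices in the hypercovering; this follows because any two choices of simplicial resolution are dominated by a third, and the transition isomorphisms of the isocrystal make all the induced comparison maps agree, exactly as in the absolute case. I expect the genuinely delicate point to be bookkeeping the \emph{relative} connection correctly — making sure that the connection on $E_\frak{X}$ produced in part (1) is the one whose de Rham complex $E_\frak{X}\otimes\Omega^*_{\frak{X}/\frak{S}}$ appears in part (2), i.e. that the two constructions are compatible — but this is a matter of unwinding definitions rather than a substantive difficulty, and I would dispatch it by noting that both the stratification defining the isocrystal and the de Rham complex are built from the same infinitesimal neighbourhoods $\frak{X}^{(n)}$ of the diagonal in $\frak{X}\times_\frak{S}\cdots\times_\frak{S}\frak{X}$.
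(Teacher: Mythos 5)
Your outline is mathematically sound, but it is worth pointing out that the paper does not actually prove this proposition: the proof consists of citing Proposition 2.21 and Corollary 2.33 of \cite{Shi08a} (Shiho's relative log convergent cohomology, specialised to trivial log structure and $\cur{P}_0=X$), which are precisely the relative analogues of Ogus' absolute results that you set out to re-derive. So what you have written is essentially a sketch of the proof of the cited results rather than a genuinely different route; the paper buys brevity by outsourcing, while your version makes the mechanism (linearisation along a cosimplicial object built from fibre products over $\frak{S}$, plus the Poincar\'e lemma) visible. One technical point deserves correction in your sketch: in the \emph{convergent} theory, the cosimplicial object covering the final object of $(X/\frak{S})_\mathrm{conv}$ is not the formal completion of $\frak{X}\times_\frak{S}\cdots\times_\frak{S}\frak{X}$ along the diagonal (that is the crystalline or infinitesimal picture), but the system of universal enlargements of $X$ in these products, whose generic fibres exhaust the tube $]X[$. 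It is exactly on these enlargements that the convergence condition on the connection is needed for the Poincar\'e lemma to hold, and the same condition cuts out the essential image in part (1): the equivalence is onto the subcategory of \emph{convergent} integrable connections relative to $\frak{S}$, not all coherent modules with connection. With that adjustment, your argument is the one carried out in \cite{Shi08a} and, in the absolute case, in \cite{Ogu84}.
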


\begin{proof} Note that Proposition 2.21 of \cite{Shi08a} (in the exceptionally simple case where the log structure is trivial and $\cur{P}_0=X$) implies that there is an equivalence of categories between convergent isocrystals and coherent $\cur{O}_{\frak{X}_K}$ modules with a convergent integrable connection, which implies (1), (2) then follows immediately from Corollary 2.33 of \emph{loc. cit}. \end{proof}

Hence, by localising, whenever $X$ is smooth over $\frak{S}_0$, $\frak{S}$ is smooth, and $E\in \mathrm{Isoc}(X/K)$, the $\cur{O}_{\frak{S},\Q}$-modules $\mathbf{R}^qf_{\frak{S},\mathrm{conv}*}E$ are equipped with a canonical connection, the Gauss--Manin connection. In fact, the assumption that $X$ is smooth over $\frak{S}_0$ is unnecessary, but we will not need this directly . Also, if $\frak{S}$ comes equipped with a lift $\sigma_\frak{S}$ of the absolute Frobenius of $\frak{S}_0$, and $E\in F\textrm{-}\mathrm{Isoc}^\dagger(X/K)$, then the sheaf $\mathbf{R}^qf_{\frak{S},\mathrm{conv}*}E$ has a natural Frobenius morphism
$$ \sigma_\frak{S}^* \mathbf{R}^qf_{\frak{S},\mathrm{conv}*}E \rightarrow \mathbf{R}^qf_{\frak{S},\mathrm{conv}*}E
$$
which is compatible with the connection. 

Now assume that we have a smooth and proper morphism $f:X\rightarrow S$ of $k$-varieties. Then in \S3 of \cite{Ogu84}, Ogus constructs, for any convergent $F$-isocrystal $E\in F\textrm{-}\mathrm{Isoc}(X/K)$, and $q\geq0$, a convergent isocrystal $\mathbf{R}^qf_{\mathrm{conv}*}E\in F\textrm{-}\mathrm{Isoc}(X/K)$ using crystalline cohomology (actually he only does this for the constant $F$-isocrystal, but essentially the same construction works in general, using Th\'{e}or\`{e}me 2.4.2 of \cite{Ber96b}). The construction is compatible with base change, in that if we have a Cartesian diagram
$$ \xymatrix{ X'\ar[r]^{g'}\ar[d]_{f'} & X\ar[d]^f \\ S'\ar[r]^g & S 
}
$$
then there is a natural isomorphism
$$ g^*\mathbf{R}^qf_{\mathrm{conv}*}E\cong \mathbf{R}^qf'_{\mathrm{conv}*}g'^*E
$$
in $F\textrm{-}\mathrm{Isoc}(S'/K)$.

\begin{proposition} \label{pfconv} Suppose that $\frak{S}$ is smooth over $\cur{V}$, and that $\sigma_\frak{S}$ is a lift of Frobenius. Then for any smooth and proper morphism $f:X\rightarrow\frak{S}_0$ of $k$-varieties, the realisation $(\mathbf{R}^qf_{\mathrm{conv}*}E)_\frak{S}$ is isomorphic to $\mathbf{R}^qf_{\frak{S},\mathrm{conv}*}E$ with its canonical Frobenius and connection. 
\end{proposition}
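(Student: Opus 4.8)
The plan is to prove the statement Zariski-locally on $\frak{S}$: first reduce to $\frak{S}$ affine, then unwind Ogus' construction to see that $(\mathbf{R}^qf_{\mathrm{conv}*}E)_\frak{S}$ is the relative crystalline cohomology of $X/\frak{S}$, and finally reassemble $\mathbf{R}^qf_{\frak{S},\mathrm{conv}*}E$ from the relative de Rham complexes of local lifts by means of Proposition \ref{derham} and the crystalline Poincar\'{e} lemma.

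Both $\frak{S}\mapsto(\mathbf{R}^qf_{\mathrm{conv}*}E)_\frak{S}$ and $\frak{S}\mapsto\mathbf{R}^qf_{\frak{S},\mathrm{conv}*}E$ are compatible with restriction to open formal subschemes of $\frak{S}$, so we may assume $\frak{S}$ affine. By the construction of \S3 of \cite{Ogu84}, extended to non-constant coefficients via Th\'{e}or\`{e}me 2.4.2 of \cite{Ber96b}, the realisation of $\mathbf{R}^qf_{\mathrm{conv}*}E$ on any enlargement $\frak{T}$ of $S$ is the $q$-th relative crystalline cohomology of $X\times_S(\frak{T}_0)_\mathrm{red}$ over $\frak{T}$ with coefficients in $E$; evaluated at $\frak{T}=\frak{S}$ with its tautological structure morphism this is the relative crystalline cohomology of $X/\frak{S}$. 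Choose a finite cover of $X$ by affine opens; since each member and each intersection is smooth and affine over $\frak{S}_0$, it lifts to a formal $\cur{V}$-scheme smooth over $\frak{S}$, on which the relative de Rham complex of Proposition \ref{derham}(2) computes, by the crystalline Poincar\'{e} lemma, the relative crystalline cohomology as well. The total complex of the associated \v{C}ech double complex (well defined by the independence of convergent cohomology from the chosen lift) therefore computes both $(\mathbf{R}^qf_{\mathrm{conv}*}E)_\frak{S}$ and, by Proposition \ref{derham}(2) together with \v{C}ech descent on $X$, also $\mathbf{R}^qf_{\frak{S},\mathrm{conv}*}E$; this yields the isomorphism of underlying $\cur{O}_{\frak{S},\Q}$-modules.

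It remains to match the extra structures. The connection on the isocrystal realisation $(\mathbf{R}^qf_{\mathrm{conv}*}E)_\frak{S}$ is the one corresponding, via Grothendieck's dictionary, to the stratification coming from the isocrystal's transition isomorphisms over a formal neighbourhood of the diagonal in $\frak{S}\times_{\cur{V}}\frak{S}$, whereas the Gauss--Manin connection on $\mathbf{R}^qf_{\frak{S},\mathrm{conv}*}E$ is the one produced, through Proposition \ref{derham}, from the canonical filtration of the absolute de Rham complex; these agree by the standard comparison between the two constructions of the Gauss--Manin connection, and since the isomorphism above is built functorially out of the relative de Rham complex it carries the one to the other. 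For Frobenius, the lift $\sigma_\frak{S}$ exhibits $\frak{S}$ as an enlargement of $S$ equipped with a lift of the absolute Frobenius $F_S$, so evaluating the $F$-isocrystal structure of $\mathbf{R}^qf_{\mathrm{conv}*}E$ on it produces a morphism $\sigma_\frak{S}^*(\mathbf{R}^qf_{\mathrm{conv}*}E)_\frak{S}\to(\mathbf{R}^qf_{\mathrm{conv}*}E)_\frak{S}$; I would identify this, using the functoriality of Ogus' construction under the Frobenius diagram of $X/S$, with the morphism induced on relative crystalline cohomology by the Frobenius of $X$ and $\sigma_\frak{S}$, which is precisely the data defining the Frobenius on $\mathbf{R}^qf_{\frak{S},\mathrm{conv}*}E$ recalled above.

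I expect this last identification to be the main obstacle: pinning down Ogus' $F$-isocrystal structure on the convergent push-forward and recognising it as the Frobenius built from the Frobenius of $X$ and the chosen lift $\sigma_\frak{S}$ requires genuinely unwinding the construction of \S3 of \cite{Ogu84} and verifying that the crystalline-to-de Rham comparison is Frobenius-equivariant with non-constant coefficients. By contrast, the underlying-module and connection statements only amount to propagating functoriality through standard comparison theorems.
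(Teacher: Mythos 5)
Your proposal is correct and follows essentially the same route as the paper's own (much terser) proof: the paper likewise observes that the underlying module with connection is the Gauss--Manin connection on relative crystalline cohomology, and deduces the Frobenius compatibility from the base-change property of $\mathbf{R}^qf_{\mathrm{conv}*}$ applied to the Frobenius diagram. You have simply filled in the \v{C}ech/crystalline Poincar\'{e} lemma details that the paper leaves implicit.
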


\begin{proof} If we ignore Frobenius structures, then this just follows from the fact that $\mathbf{R}^qf_{\mathrm{conv}*}E$ is constructed using crystalline cohomology, and hence the induced connection is simply the Gauss--Manin connection. Compatibility with Frobenius then follows from compatibility with base change in general.
\end{proof}

Next we turn to push-forward for overconvergent isocrystals.

\begin{definition} Let $f:(X,\overline{X},\frak{X})\rightarrow (S,\overline{S},\frak{S})$ be a smooth morphism of smooth frames, and $E\in (F)\textrm{-}\mathrm{Isoc}^\dagger((X,\overline{X})/K)$. Then define
$$ \mathbf{R}^qf_{\frak{S},\rig*}E := \mathbf{R}f_*(E_\frak{X}\otimes \Omega^*_{]\overline{X}[_\frak{X}/]\overline{S}[_\frak{S}}).
$$
\end{definition}

This only depends on the induced morphism $f:(X,\overline{X})\rightarrow (S,\overline{S},\frak{S})$ and not on the choice of $\frak{X}$, and if $\overline{X}$ is proper, then it in fact only depends on $f:X\rightarrow (S,\overline{S},\frak{S})$. The construction is local on $\overline{X}$, and hence to define $\mathbf{R}^qf_{\frak{S},\rig*}E$ for an arbitrary pair $(X,\overline{X})$ over $(S,\overline{S},\frak{S})$ (i.e. one not necessarily admitting an extension to a smooth morphism of frames $(X,\overline{X},\frak{X})\rightarrow (S,\overline{S},\frak{S})$) we can use descent. The details are somewhat tedious, so we won't go into them here. For a detailed description of how this works, see \cite{CT03}.

Note that $\mathbf{R}^qf_{\frak{S},\rig*}E$ is a $j^\dagger_S\cur{O}_{]\overline{S}[_\frak{S}}$-module, and is equipped with a canonical connection, the Gauss--Manin connection. When $\frak{S}$ admits a lift of Frobenius, then these sheaves are also equipped with a Frobenius morphism. Using the restriction functor 
$$  (F\textrm{-})\mathrm{Isoc}^\dagger((X,\overline{X})/K)\rightarrow (F\textrm{-})\mathrm{Isoc}(X/K)
$$
we can also define $\mathbf{R}^qf_{\frak{S},\mathrm{conv}*}E$ for any $E\in (F\textrm{-})\mathrm{Isoc}^\dagger((X,\overline{X})/K)$, and $\mathbf{R}^qf_{\mathrm{conv}*}E$ whenever $X\rightarrow S$ is smooth and proper and $E\in F\textrm{-}\mathrm{Isoc}^\dagger((X,\overline{X})/K)$. The relation between these is as follows.

\begin{proposition}\label{occ} Let $(S,\overline{S},\frak{S})$ be a smooth frame, and suppose that $f:(X,\overline{X})\rightarrow (S,\overline{S})$ is a Cartesian morphism of pairs. Assume that $\frak{S}$ admits a lift of the absolute Frobenius of $\overline{S}$, and let $\frak{S}'$ be an open subset of $\frak{S}$, stable under Frobenius, such that $\frak{S}'\cap \overline{S}=S$. Then for any $E\in F\textrm{-}\mathrm{Isoc}^\dagger((X,\overline{X})/K)$ the restriction of $\mathbf{R}^qf_{\frak{S},\rig*}E$ to $]S[_\frak{S} = ]S[_{\frak{S'}}$ is isomorphic to the realisation of $\mathbf{R}^qf_{\mathrm{conv}*}E\in F\textrm{-}\mathrm{Isoc}(S/K)\cong F\textrm{-}\mathrm{Isoc}^\dagger((S,S)/K)$ on $(S,S,\frak{S}')$. 
\end{proposition}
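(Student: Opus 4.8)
The plan is to restrict both sides to the tube $]S[_\frak{S}=\,]S[_{\frak{S}'}$, reduce by Zariski descent to the case where $f$ underlies a smooth morphism of smooth frames, and then recognise both objects as the relative de Rham (equivalently, rigid) cohomology of the fibration $]X[\,\to\,]S[$, equipped with the Gauss--Manin connection and the evident Frobenius. So first I would observe that the assertion is local on $S$ and on $\overline{S}$: the rigid push-forward $\mathbf{R}^qf_{\frak{S},\rig*}E$ is by construction local on $\overline{X}$ and commutes with restriction to open formal subschemes of $\frak{S}$, and by the stated compatibility of Ogus's construction with base change together with the locality of $(F\textrm{-})\mathrm{Isoc}$ on the base, the realisation of $\mathbf{R}^qf_{\mathrm{conv}*}E$ on $(S,\overline{S},\frak{S})$ has the same properties. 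We may therefore assume that the Cartesian morphism $f$ underlies a smooth morphism of smooth frames $(X,\overline{X},\frak{X})\to(S,\overline{S},\frak{S})$ with $\frak{X}\to\frak{S}$ smooth in a neighbourhood of $X$, and that $\frak{S}'$ is smooth over $\cur{V}$; then $\frak{X}':=f^{-1}(\frak{S}')$ is an open formal subscheme of $\frak{X}$ with $\frak{X}'\cap\overline{X}=X$ (by Cartesianity) and smooth over $\cur{V}$ near $X$, so that $(X,X,\frak{X}')$ is a smooth frame. Throughout we use that $f\colon X\to S$ is smooth and proper, as it must be for $\mathbf{R}^qf_{\mathrm{conv}*}E$ to be defined, and that $f$ is smooth, as required for the rigid push-forward.

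Next I would compute the restriction to $]S[_\frak{S}$ of $\mathbf{R}^qf_{\frak{S},\rig*}E=\mathbf{R}^qf_*\bigl(E_\frak{X}\otimes\Omega^*_{]\overline{X}[_\frak{X}/]\overline{S}[_\frak{S}}\bigr)$. Restriction to the admissible open $]S[_\frak{S}\subset\,]\overline{S}[_\frak{S}$ commutes with $\mathbf{R}f_*$, and since $\overline{X}\times_{\overline{S}}S=X$ and the specialisation maps of $\frak{X}$ and $\frak{S}$ are compatible with $f$, the preimage of $]S[_\frak{S}$ inside $]\overline{X}[_\frak{X}$ is exactly $]X[_\frak{X}=\,]X[_{\frak{X}'}$. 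As $j^\dagger_X\cur{O}_{]\overline{X}[_\frak{X}}$ restricts to $\cur{O}_{]X[_\frak{X}}$ on $]X[_\frak{X}$, and relative differentials localise, this yields
$$\bigl(\mathbf{R}^qf_{\frak{S},\rig*}E\bigr)\big|_{]S[_\frak{S}}\;\cong\;\mathbf{R}^qf_*\bigl((\mathrm{res}\,E)_{]X[_{\frak{X}'}}\otimes\Omega^*_{]X[_{\frak{X}'}/]S[_{\frak{S}'}}\bigr),$$
where $\mathrm{res}\,E\in F\textrm{-}\mathrm{Isoc}(X/K)$ is the convergent isocrystal underlying $E$ and $(\mathrm{res}\,E)_{]X[_{\frak{X}'}}=E_\frak{X}|_{]X[_\frak{X}}$ is its realisation at the closed embedding $X\hookrightarrow\frak{X}'$ into a formal scheme smooth over $\cur{V}$.

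It then remains to identify this relative de Rham complex with the realisation of $\mathbf{R}^qf_{\mathrm{conv}*}E=\mathbf{R}^qf_{\mathrm{conv}*}(\mathrm{res}\,E)$ on the smooth frame $(S,S,\frak{S}')$. By construction the realisations of Ogus's convergent isocrystal $\mathbf{R}^qf_{\mathrm{conv}*}(\mathrm{res}\,E)$ compute the relative crystalline cohomology of $X/S$ with coefficients in $\mathrm{res}\,E$, and over the base $\frak{S}'$, into which $S$ is closed-embedded and for which the induced lift $\frak{X}'\to\frak{S}'$ is smooth near $X$, the relative form of the crystalline--de Rham comparison theorem (\cite{Ber96b}, 2.4.2; compare Proposition \ref{derham}) identifies this realisation with $\mathbf{R}^qf_*\bigl((\mathrm{res}\,E)_{]X[_{\frak{X}'}}\otimes\Omega^*_{]X[_{\frak{X}'}/]S[_{\frak{S}'}}\bigr)$, carrying the Gauss--Manin connection. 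Since the Gauss--Manin connection is precisely the one borne by $\mathbf{R}^qf_{\frak{S},\rig*}E$, the two agree as modules with integrable connection. Finally, as $\frak{S}'$ is stable under $\sigma_\frak{S}$, both Frobenius structures are the one induced on relative de Rham cohomology by $\sigma_\frak{S}|_{\frak{S}'}$ together with the Frobenius structure of $E$, via the functoriality of the relative de Rham complex; hence they coincide.

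I expect the main obstacle to be this last identification: pinning down the realisation of Ogus's globally constructed convergent push-forward at the somewhat non-standard frame $(S,S,\frak{S}')$, whose formal scheme does not have $S$ as its special fibre, and matching it, compatibly with Frobenius, with the relative rigid cohomology of the tube. This is a relative incarnation of the Berthelot--Ogus comparison between crystalline and de Rham (resp. rigid) cohomology; by contrast the Zariski-descent reductions, the behaviour of $j^\dagger$ and of relative differentials under restriction to $]X[$, and the compatibility of the push-forward with restriction of the base are all routine.
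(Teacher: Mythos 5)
Your proof follows essentially the same route as the paper's: restrict to the tube $]S[_{\frak{S}}$ (equivalently, reduce to the case $S=\overline{S}$, $\frak{S}'=\frak{S}$), identify the restriction of the rigid push-forward with the relative de Rham cohomology of $]X[_{\frak{X}'}\rightarrow\,]S[_{\frak{S}'}$, and then invoke the relative comparison between de Rham/rigid and crystalline/convergent cohomology --- the paper packages this last step as a single citation of Corollary 2.34 of \cite{Shi08a}. The step you flag as the main obstacle is indeed where all the content lies, and it is precisely what Shiho's result supplies.
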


\begin{proof} We may assume that $S=\overline{S}$, and $\frak{S}'=\frak{S}$, where the claim follows from Corollary 2.34 of \cite{Shi08a}. (Shiho actually treats the more general case of log schemes, which includes the above as a special case). 
\end{proof}

\begin{remark} Shiho's relative log convergent cohomology is used in \cite{CT14} to obtain a Clemens-Schmidt type exact sequence in $p$-adic cohomology.
\end{remark}

Again, the most involved of the push-forward constructions is the version in Berthelot's theory of arithmetic $\cur{D}$-modules, and we will only give the briefest of overviews here. So suppose that $f:\frak{P}'\rightarrow \frak{P}$ is a smooth morphism of formal $\cur{V}$-schemes. Then Berthelot constructs in (4.3.7) of \cite{Ber02} a $(f^{-1}\cur{D}^\dagger_{\frak{P},\Q},\cur{D}^\dagger_{\frak{P}',\Q})$-bimodule $\cur{D}^\dagger_{\frak{P}'\rightarrow \frak{P},\Q}$ and defines the push-forward of a complex $\cur{E}\in D^b_\mathrm{coh}(\cur{D}^\dagger_{\frak{P}',\Q})$ to be
$$ f_+ \cur{E} := \mathbf{R}f_*(\cur{D}^\dagger_{\frak{P}'\rightarrow \frak{P},\Q}\otimes^\mathbf{L}_{\cur{D}^\dagger_{\frak{P}',\Q}} \cur{E}).
$$
There is a similar version for differential operators overconvergent along a divisor, assuming that the pullback of the divisor on $\frak{P}$ is contained in that on $\frak{P}'$.

If $f:(X,\overline{X})\rightarrow (S,\overline{S})$ is a \emph{proper} morphism of properly $d$-realisable pairs, then we may construct a diagram
$$ \xymatrix{ \overline{X} \ar[r]\ar[d]_f & \frak{P}'\ar[r]\ar[d] & \tilde{\frak{P}}' \ar[d]^g \\
\overline{S} \ar[r] & \frak{P} \ar[r] & \tilde{\frak{P}} 
}
$$
with both left had horizontal arrows closed immersions, both right hand horizontal arrows open immersions, $\tilde{\frak{P}}'\rightarrow\tilde{\frak{P}}$ a smooth and proper morphism between smooth and proper formal $\cur{V}$ schemes, and the right hand square Cartesian, such that there exist divisors $T,D$ of $\tilde{\frak{P}}_0',\tilde{\frak{P}}_0$ respectively with $X=\overline{X}\setminus T$, $S=\overline{S}\setminus D$, and $g^{-1}(D)\subset T$, as in Lemme 4.2.8 of \cite{Car15a}. We may then define the push-forward
$$ f_+:D^b_\mathrm{surcoh}(\cur{D}^\dagger_{(X,\overline{X})/K}) \rightarrow D^b_\mathrm{surcoh}(\cur{D}^\dagger_{(S,\overline{S})/K})
$$
as simply the push-forward $g_+$ associated to the lift $g:\frak{P}'\rightarrow \frak{P}$: this does indeed land in the category $D^b_\mathrm{surcoh}(\cur{D}^\dagger_{(S,\overline{S})/K})\subset D^b_\mathrm{surcoh}(\cur{D}^\dagger_{\frak{S}}(^\dagger D)_\Q)$, and does not depend on any of the choices (see Proposition 4.2.7 of \emph{loc. cit.}). 

\section{Versions of Berthelot's conjecture}\label{conjs}

According to the different interpretations of the category of overconvergent ($F$)-isocrystals, there are correspondingly different versions of Berthelot's conjecture, each of which is most naturally adapted to a particular viewpoint on the category of isocrystals. In this section, we review some of the different versions of Berthelot's conjecture, and discuss some of the easier implications among them. We start with Berthelot's original formulation, which is the one most closely related to the viewpoint of overconvergent isocrystals as $j^\dagger\cur{O}$-modules with connection.

\begin{conjectureu}[B(F), \cite{Ber86} \S4.3] Let $(S,\overline{S},\frak{S})$ be a smooth and proper $\cur{V}$-frame, and $f:X\rightarrow S$ a smooth and proper morphism of $k$-varieties. Then the $j-S^\dagger\cur{O}_{]\overline{S}[_\frak{S}}$-module with integrable connection $\mathbf{R}^qf_{\frak{S},\rig*}\cur{O}_{X/K}^\dagger$ 
arises from a unique overconvergent ($F$)-isocrystal
$$ \mathbf{R}^qf_{\rig*}\cur{O}_{X/K}^\dagger \in (F)	\textrm{-}\mathrm{Isoc}^\dagger(S/K).
$$
In other words, $\mathbf{R}^qf_{\rig*}\cur{O}_{X/K}^\dagger$ is coherent, the connection is overconvergent, and the resulting object in $\mathrm{Isoc}^\dagger(S/K) \cong \mathrm{MIC}^\dagger((S,\overline{S},\frak{S})/K)$ only depends on $S$ and not on the choice of frame $(S,\overline{S},\frak{S})$. (Moreover, this object has a canonical Frobenius structure.)
\end{conjectureu}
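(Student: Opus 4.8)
The plan is to deduce the conjecture from Caro's stability theorem for arithmetic $\cur{D}$-modules, using the equivalence $\mathrm{sp}_+$ between overconvergent and overcoherent isocrystals to transport between the two notions of push-forward; I would carry this out under the (standard) additional hypothesis that $S$, hence $X$, is smooth over $k$, which is what makes $\mathrm{Isoc}^{\dagger\dagger}$ available. First I would fix the geometry. Since $f$ is proper and $\overline{S}$ is proper over $k$, choose a proper compactification $\overline{X}$ of $X$ over $\overline{S}$, so that $f$ extends to a proper morphism of pairs $(X,\overline{X})\to(S,\overline{S})$; by the remark following the definition of $\mathbf{R}^qf_{\frak{S},\rig*}$, the object in question is independent of this choice. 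As $\mathrm{Isoc}^\dagger((S,\overline{S})/K)=\mathrm{Isoc}^\dagger(S/K)$ is local on $\overline{S}$ and $D^b_\mathrm{surcoh}(\cur{D}^\dagger_{(S,\overline{S})/K})$ is built by Zariski descent, I would localise on $\overline{S}$ and $S$ to reduce to a properly $d$-realisable situation; choosing lifts, this gives the diagram of formal schemes used to define $f_+$ on pairs, with a smooth proper lift $g\colon\frak{P}'\to\frak{P}$ of $f$. This reduction is where one needs $f$ to be, at least Zariski-locally, projective.

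Next I would compare the push-forwards. Put $\cur{E}:=\mathrm{sp}_{(X,\overline{X}),+}\cur{O}^\dagger_{X/K}\in\mathrm{Isoc}^{\dagger\dagger}((X,\overline{X})/K)$. Using the explicit form of $\mathrm{sp}_+$ recorded in the two bulleted special cases above (\ref{sp1} and \ref{sp2}), under which $\cur{E}$ is nothing but the $\cur{D}^\dagger(^\dagger T)$-module structure on $\cur{O}_{]\overline{X}[_\frak{X}}^\dagger$, together with the standard fact that Berthelot's transfer bimodule $\cur{D}^\dagger_{\frak{P}'\to\frak{P},\Q}$ resolves the relative de Rham complex, I would produce a canonical isomorphism in $D^b_\mathrm{coh}(\cur{D}^\dagger_{\frak{P}}(^\dagger D)_\Q)$ between the complex underlying $f_+\cur{E}$ and $\mathbf{R}f_*(\cur{O}_{]\overline{X}[_\frak{X}}^\dagger\otimes\Omega^*_{]\overline{X}[_\frak{X}/]\overline{S}[_\frak{S}})$, that is, the complex computing $\mathbf{R}^qf_{\frak{S},\rig*}\cur{O}^\dagger_{X/K}$ with its Gauss--Manin connection (up to Berthelot's normalising shift for $f_+$). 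Since $\cur{O}^\dagger_{X/K}$ carries a canonical Frobenius and every construction here is Frobenius-equivariant, this isomorphism is compatible with Frobenius.

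Then I would conclude. By Caro's theorem on the stability of overcoherence --- indeed of overcoherent isocrystals --- under push-forward along a smooth and proper morphism, every cohomology sheaf $\mathcal{H}^q f_+\cur{E}$ lies in $\mathrm{Isoc}^{\dagger\dagger}((S,\overline{S})/K)$; applying $\mathrm{sp}_{(S,\overline{S}),+}^{-1}$ produces an object $\mathbf{R}^qf_{\rig*}\cur{O}^\dagger_{X/K}\in F\textrm{-}\mathrm{Isoc}^\dagger(S/K)$ whose realisation on $(S,\overline{S},\frak{S})$ is, by the comparison step, exactly $\mathbf{R}^qf_{\frak{S},\rig*}\cur{O}^\dagger_{X/K}$ with its connection and Frobenius. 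Uniqueness follows from full faithfulness of $\mathrm{Isoc}^\dagger(S/K)\to\mathrm{MIC}((S,\overline{S},\frak{S})/K)$, and independence of the frame from the fact that $\mathrm{Isoc}^\dagger(S/K)$ is intrinsic to $S$.

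The main obstacle is precisely the first two steps: making the reduction to a properly $d$-realisable situation and the comparison $f_+\cur{E}\simeq\mathbf{R}f_*(\cur{O}^\dagger\otimes\Omega^*)$ work in sufficient generality. Caro's equivalence $\mathrm{Isoc}^\dagger\simeq\mathrm{Isoc}^{\dagger\dagger}$ and the construction of $f_+$ on pairs both rest on resolution of singularities and gluing, so to carry $\cur{O}^\dagger_{X/K}$ through them \emph{compatibly with push-forward} one already needs finiteness and base change for rigid higher direct images, i.e.\ cases of the very conjecture one is trying to prove. Absent a direct comparison between overconvergent and overcoherent isocrystals, one can therefore only bootstrap from the few cases --- essentially liftable smooth projective morphisms between liftable frames --- in which the two push-forwards can be matched by hand; this is why the present method yields the conjecture only for smooth \emph{projective} $f$, and in the genuine `B' form only with substantially weaker generality.
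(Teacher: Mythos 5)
The statement you are addressing is stated in the paper as a \emph{conjecture}, not a theorem: Conjecture B(F) has no proof in the paper and remains open. What the paper actually establishes by the strategy you outline are two strictly weaker results, Corollary \ref{ofp} (Conjecture OF for projective $f$) and Corollary \ref{bp} (Conjecture B1(F) under the hypotheses that $\frak{S}$ is smooth, $\overline{S}=\frak{S}_0$, and the induced morphism $\overline{X}\rightarrow\overline{S}$ is smooth and projective). Your outline is essentially a faithful description of that strategy, and your final paragraph correctly identifies why it does not close; but that means the proposal is not a proof of B(F) and should not be presented as one.

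The gap is concrete and occurs exactly where you flag it. First, the comparison step --- identifying $f_+\cur{E}$ with $\mathbf{R}f_*(\cur{O}^\dagger_{]\overline{X}[_{\frak{X}}}\otimes\Omega^*_{]\overline{X}[_{\frak{X}}/]\overline{S}[_{\frak{S}}})$ --- is only available (in the paper, and as far as is known at all) after localising to a situation where $\overline{X}\rightarrow\overline{S}$ admits a smooth formal lift $\frak{X}\rightarrow\frak{S}$; this forces $\overline{X}\rightarrow\overline{S}$ to be smooth, $\overline{S}=\frak{S}_0$ with $\frak{S}$ smooth affine, and, in order to glue the local computations on the $\cur{D}$-module side, projective. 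In Conjecture B(F) the frame $(S,\overline{S},\frak{S})$ is an arbitrary smooth proper frame ($S$ need not be smooth over $k$, $\frak{S}$ need only be smooth near $S$) and $f$ is merely proper, so none of these reductions apply; your standing assumption that $S$ is smooth is already a restriction not present in the statement. Second, localising on $\overline{S}$ is harmless for checking that a given module with connection is overconvergent, but Caro's equivalence $\mathrm{sp}_+$ and the construction of $f_+$ on pairs are themselves built by resolution and gluing, so carrying $\cur{O}^\dagger_{X/K}$ through them compatibly with push-forward already requires finiteness and base change for the rigid higher direct images on the pieces, i.e.\ instances of the very conjecture. The honest conclusion is therefore the one the paper draws: the method yields Conjecture OF for smooth projective $f$ and Conjecture B1(F) only under the liftability and projectivity hypotheses of Corollary \ref{bp}, while B(F) in the stated generality remains unproved.
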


\begin{remark} When referring to this and any other form of Berthelot's conjecture, we will use, for example, `Conjecture B' to refer to the conjecture without Frobenius structure, and `Conjecture  BF' to refer to the conjecture with Frobenius structure.
\end{remark}

We also have the following slightly more general formulation of Berthelot's original conjecture, due to Tsuzuki.

\begin{conjectureu}[B1(F), \cite{Tsu03}] Suppose that $(X,\overline{X})\rightarrow (S,\overline{S})$ is a proper, Cartesian morphism of $k$-pairs with $X\rightarrow S$ smooth, and that $(S,\overline{S},\frak{S})$ is a smooth $\cur{V}$-frame. Let $E\in (F)\textrm{-}\mathrm{Isoc}^\dagger((X,\overline{X})/K)$. Then the $j_S^\dagger\cur{O}_{]\overline{S}[_\frak{S}}$-module with integrable connection $\mathbf{R}^qf_{\frak{S},\rig*}E$ 
arises from a unique overconvergent ($F$)-isocrystal
$$ \mathbf{R}^qf_{\rig*}E \in (F)\textrm{-}\mathrm{Isoc}^\dagger((S,\overline{S})/K).
$$
If $\overline{S}$ is proper, then $\mathbf{R}^qf_{\rig*}E$ moreover only depends on $f:X\rightarrow S$ and $E$.
\end{conjectureu}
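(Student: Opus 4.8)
The plan is to split the statement into a formal part and a geometric part. Uniqueness is immediate from the full faithfulness of the realisation functor $\mathrm{Isoc}^\dagger((S,\overline{S})/K)\to\mathrm{MIC}((S,\overline{S},\frak{S})/K)$ (Proposition~7.2.13 of \cite{LS07}): an overconvergent ($F$)-isocrystal whose realisation on the smooth frame $(S,\overline{S},\frak{S})$ is the given module with connection $\mathbf{R}^qf_{\frak{S},\rig*}E$ is pinned down by that realisation. Granting existence, the independence of $\mathbf{R}^qf_{\rig*}E$ of the frame — and hence, when $\overline{S}$ is proper, its dependence on nothing but $f:X\to S$ and $E$, since $(F\textrm{-})\mathrm{Isoc}^\dagger((S,\overline{S})/K)$ then depends only on $S$ — follows from the full faithfulness theorem of Caro and Kedlaya (Th\'eor\`eme~2.2.1 of \cite{Car11}) together with Proposition~\ref{occ}, which locally on $\frak{S}$ identifies the restriction of the putative answer to $F\textrm{-}\mathrm{Isoc}(S/K)$ with Ogus' convergent push-forward $\mathbf{R}^qf_{\mathrm{conv}*}E$, manifestly independent of all choices. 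So the content to be proved is \emph{existence}: that $\mathbf{R}^qf_{\frak{S},\rig*}E$ is a coherent $j_S^\dagger\cur{O}_{]\overline{S}[_{\frak{S}}}$-module and that its Gauss--Manin connection is overconvergent, i.e. that this module with connection lies in the essential image of $\mathrm{Isoc}^\dagger((S,\overline{S})/K)$.

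For existence I would route everything through arithmetic $\cur{D}$-modules, and I expect to need a Frobenius structure throughout (so really this proves the $F$-version). Restrict to the case where $S$, hence $X$, is smooth over $k$, where the functors $\mathrm{sp}_{\bullet,+}$ are available; localising on $\overline{S}$ and $S$, assume we are in the properly $d$-realisable situation, and via Lemme~4.2.8 of \cite{Car15a} lift $f$ to a smooth and proper morphism $g:\frak{P}'\to\frak{P}$ of smooth formal $\cur{V}$-schemes compatibly with the divisors cutting out $X$ and $S$, so that $f_+=g_+$ by definition. Apply $\mathrm{sp}_{(X,\overline{X}),+}$ to turn $E$ into an overcoherent $F$-isocrystal $\cur{E}$; Caro's stability of overcoherence under push-forward along smooth and proper morphisms gives $f_+\cur{E}\in D^b_\mathrm{surcoh}(\cur{D}^\dagger_{(S,\overline{S})/K})$ with cohomology sheaves that are again overcoherent $F$-isocrystals, so $f_+\cur{E}\in D^b_\mathrm{isoc}(\cur{D}^\dagger_{(S,\overline{S})/K})$. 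Pushing each $\cur{H}^qf_+\cur{E}$ through $\mathrm{sp}_{(S,\overline{S}),+}^{-1}$ produces candidate overconvergent $F$-isocrystals on $(S,\overline{S})$, and it remains to match the module with connection underlying $\mathrm{sp}_{(S,\overline{S}),+}^{-1}\cur{H}^qf_+\cur{E}$ with $\mathbf{R}^qf_{\frak{S},\rig*}E$. Using the explicit description of the functors $\mathrm{sp}_{\bullet,+}$ on smooth frames recorded above, this comes down to a comparison between the $\cur{D}$-module push-forward $g_+$ and the relative de Rham push-forward $\mathbf{R}g_*(-\otimes\Omega^*_{\frak{P}'/\frak{P}})$ of the corresponding coherent $\cur{O}(^\dagger T)$-module with connection, compatibly with passing to tubes and with $j^\dagger$ — essentially Berthelot's comparison of $\cur{D}$-module and de Rham direct images.

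The hard part, and the reason B1(F) is open in general, is exactly this last comparison once one is no longer working inside a single smooth ambient frame. Caro's equivalence $\mathrm{Isoc}^\dagger((X,\overline{X})/K)\cong\mathrm{Isoc}^{\dagger\dagger}((X,\overline{X})/K)$ is assembled from resolution of singularities and gluing along locally closed pieces, so to carry a comparison of push-forwards through it one must already know that the rigid higher direct images are coherent and commute with the restriction and base-change operations used in that assembly — which is an instance of the very conjecture one is trying to prove. Thus the $\cur{D}$-module strategy only closes the loop where a direct, frame-level lift $g$ of $f$ is available and $g_+$ can be computed and matched with de Rham cohomology; this is precisely what happens when $f$ is smooth and \emph{projective}, where one works on the relative projective space, and in that generality one does obtain the statement (cf. Corollary~\ref{bp}). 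The fully general B1(F), with an arbitrary proper Cartesian $f$ with $X\to S$ smooth and an arbitrary smooth frame $(S,\overline{S},\frak{S})$ (let alone a non-smooth base, where even $\mathrm{Isoc}^\dagger$ and $\mathrm{Isoc}^{\dagger\dagger}$ are not known to agree), would need either a direct comparison of overconvergent and overcoherent isocrystals avoiding resolution, or a genuinely different attack — for instance an inductive reduction to the relative curve case in the spirit of Tsuzuki \cite{Tsu03}, which however feeds back into the same finiteness and base-change inputs.
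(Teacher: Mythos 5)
The first thing to say is that the statement you were asked to prove is stated in the paper as a \emph{conjecture} (Conjecture B1(F), due to Tsuzuki), not as a theorem: the paper offers no proof of it, and indeed its introduction stresses that the general form remains open. So there is no proof in the paper to compare yours against, and your proposal --- to its credit --- does not actually claim to close the argument. What you have written is an accurate diagnosis rather than a proof. The formal reductions you make are sound: uniqueness does follow from full faithfulness of the realisation functor on a smooth frame (\cite{LS07}, Proposition 7.2.13), and your argument for independence of choices via the Caro--Kedlaya full faithfulness theorem together with Proposition \ref{occ} is essentially how the paper handles the analogous point (note, as you do, that this forces a Frobenius structure, so at best one gets B1F this way, and Conjecture B1 without Frobenius is untouched). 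Your proposed route to existence --- pass to the properly $d$-realisable situation, apply $\mathrm{sp}_{(X,\overline{X}),+}$, invoke Caro's Theorem \ref{cfc} to get $f_+\cur{E}\in D^b_{\mathrm{isoc}}$, and then match $\mathrm{sp}_{(S,\overline{S}),+}^{-1}\cur{H}^{q}f_+\cur{E}$ with $\mathbf{R}^qf_{\frak{S},\rig*}E$ --- is exactly the strategy the paper implements in its final section, and the obstruction you identify (that Caro's equivalence $\mathrm{Isoc}^\dagger\cong\mathrm{Isoc}^{\dagger\dagger}$ is built by resolution and gluing, so carrying a comparison of push-forwards through it presupposes coherence and base change for rigid higher direct images, i.e.\ the conjecture itself) is precisely the one described in the paper's introduction.

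The genuine gap, then, is the one you name yourself: the comparison between the $\cur{D}$-module push-forward and the relative rigid (de Rham) push-forward is only available when $f$ admits a frame-level lift on which both sides can be computed explicitly, and in the paper this is arranged only for $\overline{X}\rightarrow\overline{S}$ smooth and \emph{projective} with $\overline{S}=\frak{S}_0$ and $\frak{S}$ smooth (Lemma \ref{compconv} and the lemma preceding Corollary \ref{bp}, which reduce to relative projective space, Berthelot--Kashiwara, and (4.3.6) of \cite{Ber02}). Your sketch elides the actual content of that comparison --- in the paper it requires the base change argument through the rings $\cur{B}_{\frak{X}}(T,r)$ to identify $\mathrm{sp}^*\mathbf{R}^qf_*$ with $\mathbf{R}^qf_{K*}\mathrm{sp}^*$ --- so even the special case is not proved by what you have written, only located. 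As a submission for Conjecture B1(F) itself your text should be reframed: it is a correct account of why the conjecture is open and of which special cases (Tsuzuki's liftable case, and Corollary \ref{bp}) are within reach, not a proof.
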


Note that if we have a smooth and proper morphism $f:X\rightarrow S$, and extend to a morphism $\bar{f}:\overline{X}\rightarrow \overline{S}$ between compactifications, then the diagram
$$ \xymatrix{ X\ar[r]\ar[d] & \overline{X} \ar[d] \\ S \ar[r] & \overline{S}
}
$$
is Cartesian, and hence Conjecture B1(F) does contain Conjecture B(F) as a special case.

We can also think of overconvergent isocrystals as coherent $j^\dagger\cur{O}$-modules with an overconvergent stratification, and with this viewpoint, a more natural formulation is the following version, due to Shiho.

\begin{conjectureu}[S(F), \cite{Shi08a} Conjecture 5.5] Suppose that $(X,\overline{X})\rightarrow (S,\overline{S})$ is a Cartesian morphism of pairs over $k$, with $\overline{X}\rightarrow \overline{S}$ proper and $X\rightarrow S$ smooth. Let $E$ be an overconvergent ($F$)-isocrystal on $(X,\overline{X})/K$, and $q\geq0$. Then there exists a unique overconvergent ($F$)-isocrystal $\tilde{E}$ on $(S,\overline{S})$ such that for all frames $(T,\overline{T},\frak{T})$ over $S$, with $\frak{T}$ smooth over $\cur{V}$ in a neighbourhood of $T$, the restriction of $\tilde{E}$ to $\mathrm{Strat}^\dagger(T,\overline{T},\frak{T})$ is given by
$$ p_2^* \mathbf{R}^qf'_{\frak{T},\rig*} E|_{(X_T,\overline{X}_{\overline{T}})} \cong  \mathbf{R}^qf'_{\frak{T}\times_\cur{V}\frak{T},\rig*} E|_{(X_T,\overline{X}_{\overline{T}})}\cong p_1^* \mathbf{R}^qf'_{\frak{T},\rig*} E|_{(X_T,\overline{X}_{\overline{T}})}.
$$	
Here $p_i:\frak{T}\times_\cur{V}\frak{T}\rightarrow \frak{T}$ are the projection maps, and $f'$ refers to the natural map of pairs $(X_T,\overline{X}_{\overline{T}}) \rightarrow (T,\overline{T}) $.  If $\overline{S}$ is proper, then $\tilde{E}$ only depends on $f:X\rightarrow S$ and $E$.
\end{conjectureu}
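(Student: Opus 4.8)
The plan is to recognise Conjecture S(F) as, up to bookkeeping, a reformulation of Conjecture B1(F), and then to prove the accessible cases via Caro's theory of overcoherent $\cur{D}^\dagger$-modules. A first round of reductions: since $(F\textrm{-})\mathrm{Isoc}^\dagger((S,\overline{S})/K)$ depends only on $S$ when $\overline{S}$ is proper, the final sentence of the statement is automatic once $\tilde{E}$ has been produced, and one is then free to change the compactification $\overline{S}$. Both the target category and the $j^\dagger_S\cur{O}_{]\overline{S}[_\frak{S}}$-modules $\mathbf{R}^q f'_{\frak{T},\rig*}E$ being local on $\overline{S}$ and on $S$, I would localise until $(S,\overline{S})$ is properly $d$-realisable, fix a smooth proper frame $(S,\overline{S},\frak{S})$, and work with $\frak{S}$ and its self-products $\frak{S}^2$, $\frak{S}^3$; the descent used to define $\mathbf{R}^q f_{\frak{S},\rig*}$ in general then handles the frames $(T,\overline{T},\frak{T})$ that do not extend to a morphism of frames.

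Next I would note that S(F) is essentially B1(F). Granting Conjecture B1(F), the $j_S^\dagger\cur{O}_{]\overline{S}[_\frak{S}}$-module with connection $\mathbf{R}^q f_{\frak{S},\rig*}E$ is the realisation of a \emph{unique} object of $(F\textrm{-})\mathrm{Isoc}^\dagger((S,\overline{S})/K)$; functoriality of the rigid push-forward along the two projections of $\frak{S}^2$ (and the diagonal, and the analogous maps on $\frak{S}^3$), combined with that uniqueness, forces the base-change isomorphisms to assemble into an overconvergent stratification, i.e. into the object of $\mathrm{Strat}^\dagger((S,\overline{S},\frak{S})/K)$ demanded by the statement, and $\tilde{E}$ is the corresponding overconvergent isocrystal under Le Stum's equivalence $\mathrm{Isoc}^\dagger\cong\mathrm{Strat}^\dagger$. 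Conversely, restricting any candidate $\tilde{E}$ to a single smooth frame recovers B1(F). So a proof of S(F) is a proof of B1(F).

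To establish the cases of B1(F) within reach, I would pass to arithmetic $\cur{D}$-modules, exactly as flagged in the introduction. Take $f$ smooth and \emph{projective}; apply $\mathrm{sp}_{(X,\overline{X}),+}$ to regard $E$ as an overcoherent isocrystal, and form Caro's push-forward $f_+$, which by his stability theorems lands in $D^b_{\mathrm{isoc}}(\cur{D}^\dagger_{(S,\overline{S})/K})$. The crucial point is then a comparison theorem --- the genuinely new input --- identifying, at the level of $j_S^\dagger\cur{O}_{]\overline{S}[_\frak{S}}$-modules with integrable connection, the cohomology of $f_+\mathrm{sp}_{(X,\overline{X}),+}E$ with $\mathbf{R}^q f_{\frak{S},\rig*}E$; here one uses the explicit descriptions of $\mathrm{sp}_{(S,\overline{S}),+}$ recorded in the two bulleted cases above, and Proposition \ref{occ} for the Frobenius structure and the link with Ogus' convergent push-forward. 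Because $f_+\mathrm{sp}_{(X,\overline{X}),+}E$ has overcoherent-isocrystal cohomology, this forces $\mathbf{R}^q f_{\frak{S},\rig*}E$ to be the realisation of an overconvergent $F$-isocrystal with its Gauss--Manin connection, whence $\mathbf{R}^q f_{\mathrm{conv}*}E$ is overconvergent, and the stratification is transported as in the previous paragraph.

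The main obstacle is precisely this comparison. Caro's equivalence $\mathrm{Isoc}^\dagger\cong\mathrm{Isoc}^{\dagger\dagger}$ is not constructed pointwise but through alteration/resolution and gluing, so to see that $\mathrm{sp}_+$ intertwines $\mathbf{R}^q f_+$ with $\mathbf{R}^q f_{\frak{S},\rig*}$ one must push an overconvergent isocrystal through that construction, and for \emph{that} one already needs finiteness and base change for rigid higher direct images --- that is, partial B/S-type statements --- as an ingredient. The realistic outcome of this plan is therefore not full S(F) but its smooth and projective case with Frobenius structure, obtained by bootstrapping the handful of situations in which a direct comparison between overconvergent and overcoherent isocrystals is available; removing the projectivity hypothesis, or the Frobenius structure, would seem to demand a genuinely direct comparison between the two push-forwards.
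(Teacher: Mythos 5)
The statement you have been asked to prove is Conjecture S(F), which is an \emph{open conjecture}: the paper does not prove it, and neither does your proposal --- you concede as much in your final paragraph, where the ``realistic outcome'' is only a smooth projective case with Frobenius structure. That concession is honest, but it means there is no proof here at all. What the paper actually establishes are Corollary \ref{ofp} (Conjecture OF when $f$ is projective), Corollary \ref{bp} (Conjecture B1(F) under the additional hypotheses that $\frak{S}$ is smooth, $\overline{S}=\frak{S}_0$, and $\overline{X}\rightarrow\overline{S}$ is smooth and projective), and the degenerate case $S=\overline{S}$ of Conjecture SF via Proposition \ref{occ}. None of these is Conjecture S(F), and the comparison lemmas you sketch (identifying $\cur{H}^{q-d}(f_+\tilde{E})$ with the convergent or rigid push-forward) are indeed the paper's new input, but they feed into the weaker conjectures OF and B1(F), not into S(F).

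Beyond that, your claimed equivalence between S(F) and B1(F) is a genuine error, and it is the step on which your whole reduction rests. The paper records the one-way implications $\textrm{S1(F)}\Rightarrow\textrm{S(F)}\Rightarrow\textrm{B1(F)}$; the converse you assert --- that B1(F) together with ``uniqueness'' forces the base-change maps to assemble into an overconvergent stratification --- requires knowing that the comparison morphisms $p_i^*\mathbf{R}^qf'_{\frak{T},\rig*}E\rightarrow\mathbf{R}^qf'_{\frak{T}\times_\cur{V}\frak{T},\rig*}E$ are isomorphisms. That is precisely the base-change statement which the paper isolates as a \emph{separate} strengthening (the ``c'' decoration, as in Conjecture B1(F)c). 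Applying B1(F) to the frame $(\overline{S},\overline{S},\frak{S}^2)$ only tells you that $\mathbf{R}^qf'_{\frak{S}^2,\rig*}E$ underlies \emph{some} overconvergent isocrystal; it does not identify it with $p_1^*$ or $p_2^*$ of the object produced on $\frak{S}$, and full faithfulness of the realisation functor gives uniqueness of a preimage of a \emph{given} module with connection, not an isomorphism between two a priori different ones. So even granting B1(F) in its entirety, your construction of the stratification does not close, and the statement remains, as its name indicates, a conjecture.
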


Next, by viewing overconvergent isocrystals as collections of $j^\dagger\cur{O}$-modules on each frame over $(S,\overline{S})$ we arrive at the following, stronger version of Shiho's conjecture.

\begin{conjectureu}[S1(F)] Suppose that $(X,\overline{X})\rightarrow (S,\overline{S})$ is a proper, Cartesian morphism of $k$-pairs, with $X\rightarrow S$ smooth. Let $E$ be an overconvergent ($F$)-isocrystal on $(X,\overline{X})/K$, and $q\geq0$. Then there exists a unique overconvergent ($F$)-isocrystal $\tilde{E}$ on $(S,\overline{S})$ such that for all frames $(T,\overline{T},\frak{T})$ over $(S,\overline{S})$, with $\frak{T}$ smooth over $\cur{V}$ in a neighbourhood of $T$, we have
$$ \tilde{E}_{(T,\overline{T},\frak{T})} \cong \mathbf{R}^qf'_{\frak{T},\rig*} E|_{(X_T,\overline{X}_{\overline{T}})},
$$	
with transition morphisms given by the natural base change morphisms. Here $f'$ is as above. If $\overline{S}$ is proper, then $\tilde{E}$ only depends on $f:X\rightarrow S$ and $E$.
\end{conjectureu}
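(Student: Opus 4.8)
The plan is to deduce Conjecture S1(F) from Conjecture B1(F) together with a cohomological base-change statement for the rigid higher direct images $\mathbf{R}^qf_{\frak{S},\rig*}$. First I would reduce to the situation where $(S,\overline{S})$ carries a smooth $\cur{V}$-frame. Since the categories $(F)\textrm{-}\mathrm{Isoc}^\dagger$ are local on both $S$ and $\overline{S}$, and the rigid push-forward together with its base-change morphisms is compatible with Zariski localisation on $\overline{S}$, it suffices to construct the candidate $\tilde{E}$ Zariski-locally on $\overline{S}$ and glue. Localising, we may assume $\overline{S}$ is affine and choose a closed embedding $\overline{S}\hookrightarrow\frak{S}$ into a smooth (indeed affine-space) formal $\cur{V}$-scheme, so that $(S,\overline{S},\frak{S})$ is a smooth frame; localising further on $\overline{X}$ we may also lift $\overline{X}\rightarrow\overline{S}$ to a smooth morphism of smooth frames and descend as in \cite{CT03}.

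Next I would invoke Conjecture B1(F) for this smooth frame: it produces $\tilde{E}\in(F)\textrm{-}\mathrm{Isoc}^\dagger((S,\overline{S})/K)$, equipped with a canonical Frobenius structure, whose realisation on $(S,\overline{S},\frak{S})$ is $\mathbf{R}^qf_{\frak{S},\rig*}E$. It remains to check that, for an arbitrary frame $(T,\overline{T},\frak{T})$ over $(S,\overline{S})$ with $\frak{T}$ smooth near $T$, the realisation $\tilde{E}_{(T,\overline{T},\frak{T})}$ coincides with $\mathbf{R}^qf'_{\frak{T},\rig*}E|_{(X_T,\overline{X}_{\overline{T}})}$ compatibly with the transition maps. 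Interposing the frame $(T,\overline{T},\frak{T}\times_{\cur{V}}\frak{S})$, which maps to $(T,\overline{T},\frak{T})$ over the same pair and to $(S,\overline{S},\frak{S})$ by a base-change morphism, this splits into two pieces: (i) the independence of both $\mathbf{R}^qf_{\rig*}$ and the $\mathrm{Isoc}^\dagger$-realisation functor from the choice of smooth formal scheme over a fixed pair, which is built into their constructions; and (ii) a genuine base change, namely that the natural map
$$ g^*\mathbf{R}^qf_{\frak{S},\rig*}E \longrightarrow \mathbf{R}^qf'_{\frak{T}\times_{\cur{V}}\frak{S},\rig*}\bigl(E|_{(X_T,\overline{X}_{\overline{T}})}\bigr) $$
associated to $g\colon\frak{T}\times_{\cur{V}}\frak{S}\rightarrow\frak{S}$ is an isomorphism of $j^\dagger\cur{O}$-modules.

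For (ii), granting that $\mathbf{R}f_*(E_\frak{X}\otimes\Omega^*_{]\overline{X}[_\frak{X}/]\overline{S}[_\frak{S}})$ is perfect on the tube and that, by B1(F), \emph{every} cohomology sheaf $\mathbf{R}^if_{\frak{S},\rig*}E$ is an overconvergent isocrystal and hence locally free, the standard spectral-sequence argument (the higher Tor's against the cohomology sheaves vanish) yields the base-change isomorphism in all degrees simultaneously; the Frobenius structure is what makes the local-freeness input available. Feeding this back, the locally constructed $\tilde{E}$ have matching realisations on overlaps — these realisations being the intrinsically defined rigid push-forwards — so they glue by descent for overconvergent isocrystals. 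Uniqueness of $\tilde{E}$ is inherited from the full faithfulness of the realisation functor $\mathrm{Isoc}^\dagger((S,\overline{S})/K)\rightarrow\mathrm{MIC}^\dagger((S,\overline{S},\frak{S})/K)$ used in B1(F), and when $\overline{S}$ is proper the fact that $\tilde{E}$ depends only on $f\colon X\rightarrow S$ follows from the corresponding property of $\mathrm{Isoc}^\dagger(S/K)$.

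The main obstacle is precisely (ii): as the introduction already emphasises, finiteness and base change for rigid higher direct images are the genuinely hard ingredients, and the reduction is partly circular, since using ``B1(F) $\Rightarrow$ local freeness $\Rightarrow$ base change $\Rightarrow$ S1(F)'' really needs B1(F) for all the auxiliary base changes rather than for a single frame. Unconditionally one can only force this through in favourable cases — e.g. for $f$ projective, where one abandons the de Rham description above and instead routes the construction through arithmetic $\cur{D}$-modules, using Caro's stability of overcoherence under proper push-forward, as in the paper's main theorem.
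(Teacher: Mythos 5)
The statement you are asked to prove is recorded in the paper as an \emph{open conjecture}: the paper contains no proof of S1(F), and in fact states the chain of implications
$$\textrm{Conjecture S1(F)} \Rightarrow \textrm{Conjecture S(F)}\Rightarrow \textrm{Conjecture B1(F)}\Rightarrow\textrm{Conjecture B(F)},$$
together with the observation that Conjecture S1(F)c $\Leftrightarrow$ Conjecture S1(F). Your proposal runs this chain backwards: you try to deduce S1(F) from B1(F) plus base change for rigid higher direct images. But base change is precisely the extra content that makes S1(F) strictly stronger than B1(F): the requirement that $\tilde{E}_{(T,\overline{T},\frak{T})}\cong \mathbf{R}^qf'_{\frak{T},\rig*}E|_{(X_T,\overline{X}_{\overline{T}})}$ hold for \emph{all} frames over $(S,\overline{S})$, with transition maps the base change morphisms, is a base change statement in disguise (this is why the paper asserts S1(F)c $\Leftrightarrow$ S1(F)). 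So your step (ii) is not an auxiliary lemma to be supplied along the way; it is the whole problem.

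Concretely, the argument for (ii) breaks at two points. First, you ``grant'' that $\mathbf{R}f_*(E_\frak{X}\otimes\Omega^*_{]\overline{X}[_\frak{X}/]\overline{S}[_\frak{S}})$ is a perfect complex on the tube: this coherence/finiteness statement for rigid higher direct images is not known in general and is not supplied by B1(F), which only asserts that the individual cohomology sheaves on one chosen frame underlie overconvergent isocrystals. Second, even granting perfectness and local freeness over $(S,\overline{S},\frak{S})$, the Tor-vanishing argument produces base change along $g:\frak{T}\times_\cur{V}\frak{S}\rightarrow\frak{S}$ only if one already controls the push-forward on the target frame, i.e.\ you need B1(F)-type inputs for every auxiliary frame $(T,\overline{T},\frak{T})$ simultaneously --- which, as you yourself note, is circular. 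The honest conclusion is that no proof is available here: the paper's actual contribution is confined to the much weaker Conjecture OF in the projective case (Corollary \ref{ofp}, via arithmetic $\cur{D}$-modules and Caro's theorem) and a restricted case of B1(F) (Corollary \ref{bp}); neither yields S1(F) even for projective $f$, because the comparison with $\cur{D}$-module push-forwards is established only over particular smooth affine frames and does not control the realisations on arbitrary frames over $(S,\overline{S})$.
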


At least with Frobenius structures, then thanks to full-faithfulness of the restriction from overconvergent to convergent $F$-isocrystals, we get the following (much weaker) form of the conjecture.

\begin{conjectureu}[OF] Let $:(X,\overline{X})\rightarrow (S,\overline{S})$  be a proper, Cartesian morphism of $k$-pairs, with $X\rightarrow S$ smooth, and $E\in F\textrm{-}\mathrm{Isoc}^\dagger((X,\overline{X})/K)$. Then $\mathbf{R}^qf_{\mathrm{conv}*}E$ is overconvergent along $\overline{S}\setminus S$, i.e. is in the essential image of the functor
$$  F\textrm{-}\mathrm{Isoc}^\dagger((S,\overline{S})/K) \rightarrow  F\textrm{-}\mathrm{Isoc}(S/K).
$$
\end{conjectureu}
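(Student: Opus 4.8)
The plan is to produce the overconvergent isocrystal witnessing Conjecture OF by routing the push-forward through Caro's theory of arithmetic $\cur{D}$-modules, and then comparing the resulting $\cur{D}$-module push-forward with Ogus' convergent one over the smooth locus $S$. First I would apply the equivalence $\mathrm{sp}_{(X,\overline{X}),+}$ to convert $E\in F\textrm{-}\mathrm{Isoc}^\dagger((X,\overline{X})/K)$ into an overcoherent isocrystal $\cur{M}$, regarded as an object of $D^b_\mathrm{isoc}(\cur{D}^\dagger_{(X,\overline{X})/K})\subset D^b_\mathrm{surcoh}(\cur{D}^\dagger_{(X,\overline{X})/K})$. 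Since $f:(X,\overline{X})\to(S,\overline{S})$ is a proper Cartesian morphism of pairs (which, after localising on $\overline{S}$ and $S$, we may assume to be properly $d$-realisable), the arithmetic push-forward $f_+$ of the previous section is defined, and Caro's stability of overcoherence under proper push-forward guarantees $f_+\cur{M}\in D^b_\mathrm{surcoh}(\cur{D}^\dagger_{(S,\overline{S})/K})$.

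The first genuine step is then to show that the cohomology sheaves $\cur{H}^q(f_+\cur{M})$ are again overcoherent isocrystals, so that $f_+\cur{M}\in D^b_\mathrm{isoc}(\cur{D}^\dagger_{(S,\overline{S})/K})$; granting this, the equivalence $\mathrm{sp}_{(S,\overline{S}),+}$ returns overconvergent $F$-isocrystals $\tilde{E}^q\in F\textrm{-}\mathrm{Isoc}^\dagger((S,\overline{S})/K)$. Frobenius structures ride along throughout, since both $\mathrm{sp}_{+}$ and $f_+$ are Frobenius-equivariant, so this part poses no extra difficulty beyond the isocrystal-finiteness of the cohomology, which should follow from the characterisation of overcoherent isocrystals together with the local de Rham description below.

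The heart of the argument is to identify the restriction of $\tilde{E}^q$ to $F\textrm{-}\mathrm{Isoc}(S/K)$ with $\mathbf{R}^qf_{\mathrm{conv}*}E$; this is precisely what the conjecture demands, as it exhibits the convergent push-forward in the essential image of the restriction functor. Over the open locus $S$, where $X\to S$ is smooth, both push-forwards should be computed by the same relative de Rham complex: on one side by Proposition \ref{derham} together with Proposition \ref{occ}, and on the other by Berthelot's $\cur{D}$-module push-forward $g_+$ expressed through the transfer bimodule $\cur{D}^\dagger_{\frak{P}'\to\frak{P},\Q}$, which for a smooth lift reduces to $\mathbf{R}g_*$ of the relative de Rham complex. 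Matching these via the description of $\mathrm{sp}_{(S,S),+}$ in the first of the two special cases recorded above would give the comparison, at least whenever the smooth morphism $X\to S$ admits a smooth formal lift compatible with a lift of $f$.

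The main obstacle is precisely the passage from this liftable comparison to the general proper case. When $f$ is projective one builds a global smooth lift by embedding in relative projective space, and the de Rham comparison is then immediate; for a merely proper $f$ no such lift need exist. The natural remedy is Chow's lemma, producing a projective blow-up $X'\to X$ with $X'\to S$ projective, together with cohomological descent to recover the push-forward along $f$ from that along $X'\to S$. The difficulty, as flagged in the introduction, is that descending the comparison requires finiteness and base-change statements for the rigid higher direct images at each stage of the hypercover, so that one must already know instances of the `S'- or `B'-type conjectures in order to glue. Breaking this circularity, rather than the $\cur{D}$-module formalism or the de Rham comparison itself, is where I expect the real difficulty to lie, and is the reason the clean statement of Conjecture OF is accessible only in the projective case.
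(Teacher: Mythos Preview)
The statement is a \emph{conjecture}, and the paper does not prove it in full generality; it only establishes the projective case (Corollary~\ref{ofp}). Your proposal is therefore not a proof of the stated conjecture, but rather an outline of a strategy together with an honest assessment of where it breaks down. On both counts you are essentially aligned with the paper: the route via $\mathrm{sp}_{(X,\overline{X}),+}$, Caro's Theorem~\ref{cfc} (which supplies the step you call ``the first genuine step'', namely $f_+\cur{M}\in D^b_\mathrm{isoc}$), and the de Rham comparison over a smooth lift is exactly how Corollary~\ref{ofp} is proved, and your diagnosis of the obstruction in the merely proper case---circularity through needed instances of `B' or `S' type conjectures when trying to descend along a Chow cover---matches the discussion in the introduction.

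Two points of detail where your sketch diverges from the paper's argument are worth noting. First, you reduce to the properly $d$-realisable situation by localising on $\overline{S}$ and $S$, but the paper does more before localising: it applies Chow's lemma to $\overline{X}$ (not to $X$) to make $\overline{X}\to\overline{S}$ projective, and then invokes a de Jong alteration $\overline{S}'\to\overline{S}$ together with Th\'{e}or\`{e}me~2.1.3 of \cite{Car11} to reduce to $\overline{S}$ smooth. Without the latter step one cannot assume $\overline{S}$ lifts smoothly to $\overline{\frak{S}}$, which is needed for the concrete comparison (Lemma~\ref{compconv}). Second, your proposed use of Chow's lemma---to replace a proper $X\to S$ by a projective $X'\to S$---is different from the paper's use of it and, as you correctly observe, does not go through; the paper only uses Chow's lemma to adjust the compactification $\overline{X}$ once $X\to S$ is already assumed projective.
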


Finally, by translating into the language of arithmetic $\cur{D}$-modules, we have the following version of Berthelot's conjecture (this has actually been essentially proven by Caro, as we shall see later, but we include it here as a conjecture for the purposes of exposition).

\begin{conjectureu}[C(F)] Suppose that $f: (X,\overline{X})\rightarrow (S,\overline{S})$ is a Cartesian morphism of properly $d$-realisable pairs over $k$, with $\overline{X}\rightarrow \overline{S}$ proper and $X\rightarrow S$ smooth. Then the functor
$$ f_+ :( F\textrm{-})D^b_\mathrm{surcoh}(\cur{D}^\dagger_{(X,\overline{X})/K}) \rightarrow (F\textrm{-})D^b_\mathrm{surcoh}(\cur{D}^\dagger_{(S,\overline{S})/K})
$$
sends $(F\textrm{-})D^b_\mathrm{isoc}(\cur{D}^\dagger_{(X,\overline{X})/K})$ into $(F\textrm{-})D^b_\mathrm{isoc}(\cur{D}^\dagger_{(S,\overline{S})/K})$.
\end{conjectureu}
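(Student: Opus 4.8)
The plan is to recognise Conjecture~CF as (essentially) a theorem of Caro and to reduce it to his stability results. Since the subcategories $D^b_\mathrm{isoc}(\cur{D}^\dagger_{(-,-)/K})\subset D^b_\mathrm{surcoh}(\cur{D}^\dagger_{(-,-)/K})$ and the functor $f_+$ are all built by Zariski descent, the assertion is local on $\overline{S}$ and then on $\overline{X}$, so one reduces to the case where $(S,\overline{S})$ and $(X,\overline{X})$ are properly $d$-realisable; by Lemme~4.2.8 of \cite{Car15a} one then has a diagram as in \S3 with $g\colon\tilde{\frak{P}}'\rightarrow\tilde{\frak{P}}$ a smooth and proper morphism of smooth and proper formal $\cur{V}$-schemes, divisors $T\subset\tilde{\frak{P}}'_0$, $D\subset\tilde{\frak{P}}_0$ with $g^{-1}(D)\subset T$, $X=\overline{X}\setminus T$, $S=\overline{S}\setminus D$, and with $f_+$ computed as the $\cur{D}^\dagger(^\dagger D)$-linear direct image along $g$. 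In this model the fact that $f_+$ preserves $D^b_\mathrm{surcoh}$ is already recorded in \S3 (Caro's stability of $(F\textrm{-})$overcoherence under smooth and proper direct image, Proposition~4.2.7 of \emph{loc.\ cit.}), so the only remaining point is that $f_+$ carries $(F\textrm{-})D^b_\mathrm{isoc}(\cur{D}^\dagger_{(X,\overline{X})/K})$ into $(F\textrm{-})D^b_\mathrm{isoc}(\cur{D}^\dagger_{(S,\overline{S})/K})$.

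For this I would invoke the characterisation of overcoherent isocrystals among overcoherent $F$-$\cur{D}$-modules on a smooth variety (D\'efinition~1.2.4 of \cite{Car15a}): with a Frobenius structure it is a condition on characteristic varieties and may be tested on transversal slices. Concretely, for a smooth locally closed $Z\subset S$, proper base change for the arithmetic $\cur{D}$-module direct image gives (up to shift) a canonical isomorphism $i_Z^!f_+\cur{E}\cong f_{Z+}\,i_{X_Z}^!\cur{E}$, where $f_Z\colon(X_Z,\overline{X}_Z)\rightarrow(Z,\overline{Z})$ is the base change of $f$ (still smooth, with $\overline{X}_Z\rightarrow\overline{Z}$ still proper) and $i_{X_Z}^!\cur{E}$ is again, up to shift, of isocrystal type since $X_Z\hookrightarrow X$ is a smooth immersion. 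One then runs a dévissage on $\dim S$: for $\dim Z=0$ the right-hand side lands, by the surcoherence stability already quoted, in $D^b_\mathrm{surcoh}$ over a point, where the isocrystal condition is automatic (equivalently, this is the finiteness of rigid cohomology of the fibre with overconvergent $F$-coefficients), and the inductive step feeds the lower-dimensional instances back in; combining these, Caro's dévissage forces each $\cur{H}^q(f_+\cur{E})$ to be $\cur{O}^\dagger$-coherent, hence an overcoherent isocrystal.

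The main obstacle — and the reason this is still phrased as a conjecture — is precisely this isocrystal-preservation step. To make the dévissage run one needs the base change isomorphism $i_Z^!f_+\cong f_{Z+}i_{X_Z}^!$ together with a sufficiently uniform identification of $f_{Z+}$ of an overcoherent isocrystal with (relative) rigid cohomology, and one must keep careful track of cohomological amplitudes so as to deduce the slice condition for each individual $\cur{H}^q(f_+\cur{E})$ rather than merely for the total complex $f_+\cur{E}$. This machinery is available in the quasi-projective setting thanks to Caro's construction of the six operations for overholonomic $F$-complexes, which is exactly why Conjecture~CF is known there; the general case would in addition require the analogous base change and finiteness over an arbitrary smooth and proper base, together with the compatibility of $g_+$ with the gluing of the $d$-realisable charts. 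Without Frobenius structures neither the slice characterisation of isocrystals nor the finiteness input is presently available in the needed strength, so there I would expect only substantially weaker partial results.
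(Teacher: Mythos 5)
The paper itself offers no proof of this statement: Conjecture~C(F) is stated as a conjecture purely for expository purposes, and the only justification ever given is the citation of Caro's Th\'eor\`emes~4.4.2 and 4.4.3 of \cite{Car15a}, recorded as Theorem~\ref{cfc} (which in fact asserts the stronger Conjecture~C(F)c, with base change). Your opening move --- recognising the statement as Caro's theorem --- therefore coincides exactly with the paper's treatment, and if your submission is read as an attribution it is correct.

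Read as a proof, however, it has an admitted gap. The reduction ``to the case where $(S,\overline{S})$ and $(X,\overline{X})$ are properly $d$-realisable'' is vacuous, since that is already a hypothesis of the conjecture as stated; what Proposition~4.2.7 and Lemme~4.2.8 of \cite{Car15a} give you is only that $f_+$ is well defined and lands in $D^b_\mathrm{surcoh}(\cur{D}^\dagger_{(S,\overline{S})/K})$. The entire content of the conjecture is the remaining step --- that $f_+$ preserves the isocrystal subcategories --- and your d\'evissage on $\dim S$ via slices and $i_Z^!f_+\cong f_{Z+}i_{X_Z}^!$ is a plausible outline of Caro's strategy but is not carried out: you yourself flag the cohomological-amplitude bookkeeping and the identification of $f_{Z+}$ with relative rigid cohomology as ``the main obstacle.'' One further inaccuracy: your closing claim that the non-Frobenius case is essentially out of reach contradicts the paper, which records Caro's theorem as covering both variants and which observes in \S\ref{conjs} that Conjecture~C \emph{implies} Conjecture~CF via the forgetful functor, so the Frobenius-free statement is the stronger one here, not a weaker by-product. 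The honest conclusion is that your argument establishes nothing beyond what the citation of \cite{Car15a} already does, and the speculative portion should either be completed or removed.
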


Thus we have 5 conjectures B, B1, S, S1, C relating to overconvergent isocrystals without Frobenius, and 6 conjectures BF, B1F, SF, S1F, OF, CF relating to those with Frobenius structures. We have the straightforward implications
$$ \textrm{Conjecture S1(F)} \Rightarrow \textrm{Conjecture S(F)}\Rightarrow \textrm{Conjecture B1(F)}\Rightarrow\textrm{Conjecture B(F)}.$$

\begin{lemma} Conjecture \textrm{B1F} $\Rightarrow$ Conjecture \textrm{OF}.
\end{lemma}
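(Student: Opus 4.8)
The plan is to reduce Conjecture OF to a statement that is local on the pair $(S,\overline{S})$ and then to read it off from Conjecture B1F using the comparison in Proposition \ref{occ}. First I would note that the hypotheses of Conjecture OF already force $X\to S$ to be proper: it is the base change of the proper morphism $\overline{X}\to\overline{S}$ along $S\hookrightarrow\overline{S}$, the square being Cartesian, so $\mathbf{R}^qf_{\mathrm{conv}*}E\in F\textrm{-}\mathrm{Isoc}(S/K)$ is defined. Next, since $F\textrm{-}\mathrm{Isoc}^\dagger((S,\overline{S})/K)$ and $F\textrm{-}\mathrm{Isoc}(S/K)$ are both local on $(S,\overline{S})$ and the restriction functor between them is fully faithful by the theorem of Caro and Kedlaya (Th\'{e}or\`{e}me 2.2.1 of \cite{Car11}), the property of lying in its essential image can be checked after restricting to the members of a Zariski cover of $\overline{S}$ (whose restrictions to $S$ cover $S$): the local lifts are then unique up to unique isomorphism, so the comparison isomorphisms on overlaps --- those restricting to the identity on the convergent side --- satisfy the cocycle condition, and the local lifts glue to a global one. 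It therefore suffices to treat the case in which $\overline{S}$ admits a closed embedding into a smooth affine formal $\cur{V}$-scheme. Concretely, after replacing $\overline{S}$ by an affine open (and $S,X,\overline{X},E$ by their restrictions), I would choose a closed embedding $\overline{S}\hookrightarrow\A^N_k$, compose it with $\A^N_k\hookrightarrow\widehat{\A}^N_\cur{V}=:\frak{S}$ to realise $(S,\overline{S},\frak{S})$ as a smooth frame, and equip $\frak{S}$ with the lift of absolute Frobenius raising each coordinate to its $q$-th power, $q$ being the fixed power of $p$.

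In this situation I would set $\frak{S}'=\frak{S}\setminus C$, where $C\subset\frak{S}_0$ is the Zariski closure of the closed subset $\overline{S}\setminus S$; then $\frak{S}'\cap\overline{S}=S$. Since the chosen lift of Frobenius is the identity on the underlying space of $\frak{S}_0$ it preserves $C$, hence $\frak{S}'$, and since the $q$-th power map preserves the ideal of $\overline{S}$ in $\frak{S}_0$ it also restricts to a lift of the absolute Frobenius of $\overline{S}$; so all the hypotheses of Proposition \ref{occ} are met. That proposition identifies the restriction of $\mathbf{R}^qf_{\frak{S},\rig*}E$ to $]S[_\frak{S}=]S[_{\frak{S}'}$ with the realisation on the frame $(S,S,\frak{S}')$ of $\mathbf{R}^qf_{\mathrm{conv}*}E$, regarded in $F\textrm{-}\mathrm{Isoc}^\dagger((S,S)/K)$ via the equivalence with $F\textrm{-}\mathrm{Isoc}(S/K)$. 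On the other hand, Conjecture B1F applied to the smooth frame $(S,\overline{S},\frak{S})$ produces an overconvergent $F$-isocrystal $\mathbf{R}^qf_{\rig*}E$ on $(S,\overline{S})$ whose realisation on $(S,\overline{S},\frak{S})$ is $\mathbf{R}^qf_{\frak{S},\rig*}E$; restricting it along the frame morphism $(S,S,\frak{S}')\to(S,\overline{S},\frak{S})$ lying over the morphism of pairs $(S,S)\to(S,\overline{S})$, and using that $j_S^\dagger$ is trivial on $]S[_\frak{S}$, I get that the realisation on $(S,S,\frak{S}')$ of $\mathbf{R}^qf_{\rig*}E|_{(S,S)}$ is likewise the restriction of $\mathbf{R}^qf_{\frak{S},\rig*}E$ to $]S[_\frak{S}$.

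Comparing the two descriptions, $\mathbf{R}^qf_{\mathrm{conv}*}E$ and $\mathbf{R}^qf_{\rig*}E|_{(S,S)}$ have the same realisation on the smooth formal scheme $\frak{S}'$, compatibly with connection and Frobenius; as the realisation functor attached to the closed embedding $S\hookrightarrow\frak{S}'$ is fully faithful (the embedded version of Proposition \ref{convcon} recorded in the remark following it), this yields an isomorphism $\mathbf{R}^qf_{\mathrm{conv}*}E\cong\mathbf{R}^qf_{\rig*}E|_{(S,S)}$ in $F\textrm{-}\mathrm{Isoc}(S/K)$. Hence $\mathbf{R}^qf_{\mathrm{conv}*}E$ lies in the essential image of the restriction functor $F\textrm{-}\mathrm{Isoc}^\dagger((S,\overline{S})/K)\to F\textrm{-}\mathrm{Isoc}(S/K)$, which is the local form of Conjecture OF; gluing the local lifts as above then finishes the argument. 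I do not expect a genuine conceptual obstacle here: the only slightly delicate points are the Frobenius bookkeeping needed to produce a local smooth frame carrying a lift of Frobenius together with a Frobenius-stable open $\frak{S}'$ (which is why it is convenient to work inside formal affine space), and the verification that the essential-image condition is local, which is precisely where the full faithfulness theorem of Caro and Kedlaya enters.
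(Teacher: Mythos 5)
Your argument is correct and is essentially the paper's own proof, which simply says that the question is local on $\overline{S}$, reduces to the situation of Proposition \ref{occ}, and concludes; you have just spelled out the localisation (via full faithfulness of the restriction functor to justify gluing), the construction of the Frobenius-stable frame $(S,\overline{S},\frak{S})$ with its open $\frak{S}'$, and the final comparison of realisations. No discrepancy with the paper's route.
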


\begin{proof}  The question is local on $\overline{S}$, we may therefore assume that we are in the situation of Proposition \ref{occ}, and the lemma immediately follows.
\end{proof}

There are also some implications between the `Frobenius' conjectures and the conjectures without Frobenius, for example we have the obvious observation that Conjecture BF  $\Rightarrow$ Conjecture B, and the base change part of Conjecture S1 means that Conjecture S1 $\Rightarrow$ Conjecture S1F. Also, the existence of the commutative diagram
$$ \xymatrix{  F\textrm{-}D^b_\mathrm{overhol}(\cur{D}^\dagger_{(X,\overline{X})/K}) \ar[r]\ar[d] & D^b_\mathrm{overhol}(\cur{D}^\dagger_{(X,\overline{X})/K}) \ar[d] \\ F\textrm{-}D^b_\mathrm{overhol}(\cur{D}^\dagger_{(S,\overline{S})/K}) \ar[r] & D^b_\mathrm{overhol}(\cur{D}^\dagger_{(S,\overline{S})/K})
}
$$
shows that Conjecture C $\Rightarrow$ Conjecture CF. 

To all of these conjectures we may also append a `base change' statement, which states that the resulting overconvergent $(F\textrm{-})$isocrystals satisfy a suitable base change property via morphisms of varieties $T\rightarrow S$, pairs $(T,\overline{T})\rightarrow (S,\overline{S})$ or triples $(T,\overline{T},\frak{T})\rightarrow (S,\overline{S},\frak{S})$.

For example, in the base of Conjecture B1(F), this states that if $g:(T,\overline{T},\frak{T})\rightarrow (S,\overline{S},\frak{S})$ is a morphism of triples, and $E\in (F\textrm{-})\mathrm{Isoc}^\dagger((X,\overline{X})/K)$ with pullback $E_{(T,\overline{T})}\in (F\textrm{-})\mathrm{Isoc}^\dagger((X_T,\overline{X}_{\overline{T}})/K)$, then in addition to Conjecture B1(F) holding for both the pushforward of $E$ via $f:(X,\overline{X})\rightarrow (S,\overline{S},\frak{S})$ and the pushforward of $E_{(T,\overline{T})}$ via $f_{(T,\overline{T})}:(X_T,\overline{X}_{\overline{T}})\rightarrow (T,\overline{T},\frak{T})$, we have an isomorphism
\[ g^*\mathbf{R}f_{\rig*} E \cong \mathbf{R}f_{(T,\overline{T})\rig*} E_{(T,\overline{T})}\]
of overconvergent ($F\textrm{-}$)isocrystals on $(T,\overline{T})/K$.

Similarly, in the case of Conjecture $C(F)$ this states that if 
\[  \xymatrix{ (X_T,\overline{X}_{\overline{T}}) \ar[r]^{g'}\ar[d]_{f'} & (X,\overline{X})\ar[d]^f  \\ (T,\overline{T})\ar[r]^g & (S,\overline{S})  } \]
is a Cartesian square of properly $d$-realisable pairs, and $E\in ( F\textrm{-})D^b_\mathrm{surhol}(\cur{D}^\dagger_{(X,\overline{X})/K})$ with pullback $g'^! E \in ( F\textrm{-})D^b_\mathrm{surhol}(\cur{D}^\dagger_{(X_T,\overline{X}_{\overline{T}})/K})$, then in addition to Conjecture C(F) holding for both $E$ and $g'^!E$, we have an isomorphism
\[ g^!f_+E \cong f'_+g'^!E \]
in $(F\textrm{-})D^b_\mathrm{surhol}(\cur{D}^\dagger_{(T,\overline{T})/K})$ (for the definition of the extraordinary inverse image functors $g^!$ and $g'^!$ see for example \S4.3 of \cite{Ber02}). We invite the reader to formulate precise `base change' versions of Conjectures B(F), S(F), S1(F) and OF.

We will denote by `c' a conjecture including a base change statement, for example we will refer to Conjecture B1Fc. We therefore have the implications Conjecture B(1)c $\Rightarrow$ Conjecture B(1)Fc, Conjecture Sc $\Rightarrow$ Conjecture SFc and Conjecture S1(F)c $\Leftrightarrow $ Conjecture S1(F).

\section{Previously known results}

In this section, we collect together some previously known cases of the conjectures stated above. We start with the original case noted by Berthelot.

\begin{theorem}[\cite{Ber86}, Th\'{e}or\`{e}me 5] Assume that there exists a morphism $f:\overline{\frak{X}}\rightarrow \overline{\frak{S}}$ of proper formal $\cur{V}$-schemes, and a smooth open formal subscheme $\frak{S}\subset \overline{\frak{S}}$ such that $f:\frak{X}:=f^{-1}\frak{S}\rightarrow \frak{S}$ is smooth and lifts the given morphism $f:X\rightarrow S$. Then Conjecture B(F) holds.
\end{theorem}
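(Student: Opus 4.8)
The plan is to recognise $\mathbf{R}^qf_{\frak{S},\rig*}\cur{O}_{X/K}^\dagger$ as relative rigid cohomology computed on the given global proper lift $\overline{\frak{X}}\to\overline{\frak{S}}$, to establish its coherence and the overconvergence of the Gauss--Manin connection there, and then to import Ogus' convergent push-forward to pin down the restriction over $]S[$ and to produce the Frobenius structure.

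First I would fix $q\geq0$ and set $\cur{H}^q:=\mathbf{R}^qf_{\frak{S},\rig*}\cur{O}_{X/K}^\dagger$. Since $\mathbf{R}^qf_{\frak{S},\rig*}$ is independent of the chosen formal lift of $\overline{X}$, one may compute it using $\overline{\frak{X}}$ itself; as $\overline{\frak{X}}$ and $\overline{\frak{S}}$ are proper one has $]\overline{X}[_{\overline{\frak{X}}}=\overline{\frak{X}}_K$ and $]\overline{S}[_{\overline{\frak{S}}}=\overline{\frak{S}}_K$, and $f\colon\overline{\frak{X}}_K\to\overline{\frak{S}}_K$ is a proper morphism of rigid spaces. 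The square
\[ \xymatrix{ X\ar[r]\ar[d] & \overline{X}\ar[d] \\ S\ar[r] & \overline{S} } \]
is Cartesian, so $\overline{X}\setminus X=f^{-1}(\overline{S}\setminus S)$ and the preimages $f^{-1}(W)$ of strict neighbourhoods $W$ of $]S[_{\overline{\frak{S}}}$ in $\overline{\frak{S}}_K$ are strict neighbourhoods of $]X[_{\overline{\frak{X}}}$ in $\overline{\frak{X}}_K$, cofinal among all such. Using that $\mathbf{R}f_*$ commutes with the filtered colimit computing $j_X^\dagger$ and with base change along the $W$, I would deduce
\[ \cur{H}^q\;\cong\;\mathrm{colim}_W\,\mathbf{R}^qf_{W*}\bigl(\Omega^*_{f^{-1}(W)/W}\bigr), \]
the colimit over strict neighbourhoods $W$ of $]S[_{\overline{\frak{S}}}$, with $f_W\colon f^{-1}(W)\to W$ the (proper) base change of $f$.

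The next, and crucial, step is coherence: each $\mathbf{R}^qf_{W*}(\Omega^*_{f^{-1}(W)/W})$ is a coherent $\cur{O}_W$-module by Kiehl's proper mapping theorem, and one must show that the colimit stabilises, i.e. that $\cur{H}^q\cong j_S^\dagger\cur{F}$ for a coherent module $\cur{F}$ on some strict neighbourhood. Together with the parallel verification that the Taylor series of the Gauss--Manin connections on the $\mathbf{R}^qf_{W*}(\Omega^*_{f^{-1}(W)/W})$ converge on a single fixed strict neighbourhood of the diagonal, this will show that $(\cur{H}^q,\nabla)$ lies in the essential image of the realisation functor $\mathrm{Isoc}^\dagger((S,\overline{S})/K)\to\mathrm{MIC}^\dagger((S,\overline{S},\overline{\frak{S}})/K)$; since $\overline{S}$ is proper, the resulting object of $\mathrm{Isoc}^\dagger(S/K)$ then depends only on $S$ and not on the frame, as required. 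To complete the argument I would identify the restriction of $\cur{H}^q$ to $]S[_{\overline{\frak{S}}}=]S[_\frak{S}$: applying Proposition~\ref{occ} with the frame $(S,\overline{S},\overline{\frak{S}})$ and $\frak{S}$ in the role of $\frak{S}'$ (localising on $\overline{S}$, where a Frobenius lift exists, if need be), and Propositions~\ref{derham} and~\ref{pfconv} to match the de Rham, convergent and crystalline incarnations of the push-forward, this restriction is identified, compatibly with connection and Frobenius, with the realisation on $(S,S,\frak{S})$ of Ogus' convergent $F$-isocrystal $\mathbf{R}^qf_{\mathrm{conv}*}\cur{O}_{X/K}\in F\textrm{-}\mathrm{Isoc}(S/K)$; this in particular endows $\cur{H}^q$ with its canonical Frobenius structure.

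The hard part will be the coherence/finiteness assertion of the third paragraph --- equivalently, the overconvergence of the Gauss--Manin connection --- for it is exactly here that the hypothesis of a \emph{global} smooth proper lift $\overline{\frak{X}}\to\overline{\frak{S}}$ is used essentially: via Kiehl's theorem and the cofinality of the $f^{-1}(W)$ one reduces to a Monsky--Washnitzer-type finiteness statement for the relative de Rham cohomology, which is the technical core of Berthelot's argument in \cite{Ber86}.
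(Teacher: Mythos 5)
First, a point of calibration: the paper does not prove this statement at all --- it sits in the section of previously known results and is simply quoted from \cite{Ber86} --- so there is no in-paper argument to compare yours against; the comparison has to be with Berthelot's original proof, and your outline does capture its shape (compute $\mathbf{R}^qf_{\frak{S},\rig*}$ on the global proper lift, use properness of $\overline{\frak{X}}_K\rightarrow\overline{\frak{S}}_K$ to get cofinality of preimages of strict neighbourhoods, invoke Kiehl's finiteness theorem). That said, you have the difficulty in the wrong place. The coherence and stabilisation of the colimit are comparatively soft: the hypercohomology spectral sequence of $\Omega^*_{f^{-1}(W)/W}$ has coherent $E_1$-terms with $\cur{O}_W$-linear differentials, and restriction to a smaller strict neighbourhood is base change along an open immersion, so the system is essentially constant and the colimit is $j_S^\dagger$ of a single coherent sheaf. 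The genuine technical core, which your sketch names only in passing, is the overconvergence of the Gauss--Manin connection, i.e.\ the construction of the stratification on a strict neighbourhood of the diagonal in $]\overline{S}[_{\overline{\frak{S}}\times\overline{\frak{S}}}$ via the two pullbacks of $\overline{\frak{X}}$ and the fibration theorems; deferring this is deferring most of the content of Th\'{e}or\`{e}me 5.

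Second, your derivation of the Frobenius structure has a real gap. Identifying the restriction of $\cur{H}^q$ to $]S[_{\frak{S}}$ with the realisation of Ogus' convergent $F$-isocrystal only produces a Frobenius isomorphism between the \emph{convergent} realisations of $F^*\cur{H}^q$ and $\cur{H}^q$. To promote it to an isomorphism of overconvergent isocrystals you would need full faithfulness of $\mathrm{Isoc}^\dagger(S/K)\rightarrow\mathrm{Isoc}(S/K)$ \emph{without} Frobenius structures (the objects are not yet known to be $F$-isocrystals, so the Caro--Kedlaya theorem cannot be invoked), and the paper explicitly notes that this is open. The Frobenius must instead be constructed directly, as Berthelot does: functoriality of the rigid push-forward with respect to absolute Frobenius on $(S,\overline{S})$ and the relative Frobenius of $X/S$, together with the fact that relative Frobenius induces an isomorphism on rigid cohomology. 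This uses the base-change properties of $\mathbf{R}^qf_{\rig*}$ afforded by the global lift rather than a detour through the convergent category.
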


In the paper where he introduced Conjecture B1(F), Tsuzuki also proved the following case.

\begin{theorem}[\cite{Tsu03}, Theorem 4.1.1]  In the situation of Conjecture B1(F), suppose that there exists a smooth and proper morphism $(X,\overline{X},\frak{X})\rightarrow (S,\overline{S},\frak{S})$ of smooth $\cur{V}$-frames, such that the square
$$ \xymatrix{  \overline{X} \ar[r] \ar[d] & \frak{X} \ar[d] \\
 \overline{S} \ar[r] & \frak{S}
}
$$
is Cartesian. Then Conjecture B1(F)c holds.
\end{theorem}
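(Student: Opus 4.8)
The plan is to take the relative rigid cohomology $\mathbf{R}^qf_{\frak{S},\rig*}E$ --- which is already a $j_S^\dagger\cur{O}_{]\overline{S}[_\frak{S}}$-module carrying its Gauss--Manin connection --- as the candidate for $\mathbf{R}^qf_{\rig*}E$, and to reduce the whole statement (coherence of this sheaf, overconvergence of its connection, existence of the Frobenius structure, and the base change statement) to two facts:
\begin{itemize}
\item[(a)] (\emph{finiteness}) $\mathbf{R}^qf_{\frak{S},\rig*}E$ is a coherent $j_S^\dagger\cur{O}_{]\overline{S}[_\frak{S}}$-module;
\item[(b)] (\emph{base change}) for every morphism of smooth $\cur{V}$-frames $g:(T,\overline{T},\frak{T})\to(S,\overline{S},\frak{S})$ there is a canonical isomorphism $g^*\mathbf{R}^qf_{\frak{S},\rig*}E\cong\mathbf{R}^qf_{\frak{T},\rig*}(E|_{(X_T,\overline{X}_{\overline{T}})})$, functorial in $g$ and compatible with Gauss--Manin connections.
\end{itemize}
A preliminary point is that, \emph{thanks to the Cartesian hypothesis on the square}, the base-changed datum $(X_T,\overline{X}_{\overline{T}},\frak{X}\times_\frak{S}\frak{T})\to(T,\overline{T},\frak{T})$ is again a smooth and proper morphism of smooth frames with Cartesian square, so (a) and (b) apply equally over every such $\frak{T}$; the base change part of Conjecture B1(F)c then follows from (b) together with full faithfulness of the realisation functor $\mathrm{Isoc}^\dagger((T,\overline{T})/K)\to\mathrm{MIC}((T,\overline{T},\frak{T})/K)$.

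Granting (a) and (b), the overconvergent isocrystal is produced as follows. The two projections $p_i:\frak{S}^2:=\frak{S}\times_\cur{V}\frak{S}\to\frak{S}$ are morphisms of smooth frames once $\overline{S}$ is embedded diagonally (as $p_i\circ\Delta=\iden$), and each restricts to the identity on the pair $(S,\overline{S})$, so (b) yields canonical isomorphisms
$$ p_1^*\mathbf{R}^qf_{\frak{S},\rig*}E\;\cong\;\mathbf{R}^qf_{\frak{S}^2,\rig*}E\;\cong\;p_2^*\mathbf{R}^qf_{\frak{S},\rig*}E, $$
where $\mathbf{R}^qf_{\frak{S}^2,\rig*}E$ is the relative rigid cohomology of the pair $(X,\overline{X})$ over the frame $(S,\overline{S},\frak{S}^2)$ (well defined independently of a lift, by the remark following the definition). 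This composite is an overconvergent stratification $\varepsilon$ on $\tilde{E}:=\mathbf{R}^qf_{\frak{S},\rig*}E$; the cocycle condition on $\frak{S}\times_\cur{V}\frak{S}\times_\cur{V}\frak{S}$ and the unit condition along $\Delta$ follow formally from the functoriality in (b). By the equivalence $\mathrm{Strat}^\dagger((S,\overline{S},\frak{S})/K)\cong\mathrm{Isoc}^\dagger((S,\overline{S})/K)$, the pair $(\tilde{E},\varepsilon)$ arises from an overconvergent isocrystal $\mathbf{R}^qf_{\rig*}E\in\mathrm{Isoc}^\dagger((S,\overline{S})/K)$ whose realisation on $(S,\overline{S},\frak{S})$ is $\tilde{E}$ with its Gauss--Manin connection (the connection underlying $\varepsilon$ being Gauss--Manin is essentially the classical definition of the latter), so that connection is in particular overconvergent; uniqueness follows from full faithfulness of the realisation functor. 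For Frobenius structures one works locally on $\overline{S}$ so that $\frak{S}$ carries a lift $\sigma$ of absolute Frobenius; base change (b) along $\sigma$, composed with $\mathbf{R}^qf_{\frak{S},\rig*}$ of the Frobenius isomorphism of $E$, equips $\tilde{E}$ with a Frobenius structure compatible with $\varepsilon$, which descends to $\mathbf{R}^qf_{\rig*}E$. Finally, when $\overline{S}$ is proper the category $\mathrm{Isoc}^\dagger((S,\overline{S})/K)$ depends only on $S$, which gives the last clause.

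It remains to establish (a) and (b), and this is the one point at which the hypothesis of a smooth and proper lift is genuinely used. Since $\overline{X}\to\overline{S}$ is proper, I would first replace $\frak{X}$ --- harmlessly, by the independence from the lift --- by a projective formal $\frak{S}$-scheme containing $\overline{X}$ as a closed subscheme, for instance some $\P^N_\frak{S}$ (using that $\overline{S}\hookrightarrow\frak{S}$ is a closed immersion), so that the induced morphism of tubes $]\overline{X}[_\frak{X}\to\,]\overline{S}[_\frak{S}$ becomes proper. One then runs the standard finiteness argument for relative rigid cohomology: choose a cofinal system of strict neighbourhoods $W$ of $]S[_\frak{S}$ in $]\overline{S}[_\frak{S}$ and compatible strict neighbourhoods $V\subseteq f^{-1}(W)$ of $]X[_\frak{X}$ lying over them, write $\mathbf{R}^qf_{\frak{S},\rig*}E\cong\mathrm{colim}_W\mathbf{R}^qf_*(E_V\otimes\Omega^\bullet_{V/W})$, and invoke Kiehl's coherence theorem for proper morphisms of rigid spaces together with the usual stabilisation argument for the colimit; this is the relative avatar of Berthelot's finiteness theorem for rigid cohomology, and it is the essential nontrivial input. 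For (b) one additionally invokes flat base change for coherent cohomology along these strict neighbourhoods --- legitimate because $\frak{T}\to\frak{S}$ is flat near $T$, being smooth there --- together with the compatibility of $j^\dagger$ and of Gauss--Manin with the pullbacks in play. The main obstacle is precisely this package (a)--(b): controlling both the coherence and the base change behaviour of relative rigid cohomology on a cofinal system of strict neighbourhoods over which the morphism of tubes is proper. Once that is in hand everything else is formal, and it is exactly the absence of such a lift --- hence of any handle on these strict neighbourhoods or on finiteness --- that prevents the same argument from settling the conjecture in general.
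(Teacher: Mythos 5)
The paper offers no proof of this statement: it is quoted verbatim from Tsuzuki, so the only thing to compare your attempt against is \cite{Tsu03} itself. Your overall architecture --- take $\tilde{E}=\mathbf{R}^qf_{\frak{S},\rig*}E$ as the candidate, reduce everything to coherence and base change of this realisation, manufacture an overconvergent stratification from the two projections $\frak{S}\times_\cur{V}\frak{S}\rightarrow\frak{S}$, pass through the equivalence $\mathrm{Strat}^\dagger\cong\mathrm{Isoc}^\dagger$, and obtain the Frobenius structure by base change along a local Frobenius lift --- is the standard reduction and is indeed how Tsuzuki's coherence and base change results get converted into an isocrystal. That formal half is sound (modulo the unchecked but true point that the lift $\frak{X}\times_\cur{V}\frak{S}$ over $\frak{S}\times_\cur{V}\frak{S}$, with $\overline{X}$ embedded via the graph of $f$, again satisfies the Cartesian hypothesis).

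The genuine gaps are in your (a) and (b). First, replacing $\frak{X}$ by some $\P^N_\frak{S}$ is both unavailable and self-defeating: $\overline{X}\rightarrow\overline{S}$ is only assumed proper, not projective, so no such closed embedding need exist; and even if it did, the replacement destroys the Cartesian condition $\overline{X}=\overline{S}\times_{\frak{S}}\frak{X}$, which is the one hypothesis doing all the work. It is precisely this condition that identifies $]\overline{X}[_{\frak{X}}$ with $f_K^{-1}(]\overline{S}[_{\frak{S}})$ and $]\overline{X}\setminus X[_{\frak{X}}$ with $f_K^{-1}(]\overline{S}\setminus S[_{\frak{S}})$, hence makes $f_K^{-1}$ of a cofinal system of strict neighbourhoods of $]S[_{\frak{S}}$ into a cofinal system of strict neighbourhoods of $]X[_{\frak{X}}$ --- exactly what your colimit-plus-Kiehl argument requires. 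After the replacement one has $\overline{S}\times_{\frak{S}}\P^N_{\frak{S}}=\P^N_{\overline{S}}\neq\overline{X}$ and none of this holds; the Cartesian hypothesis cannot be traded for projectivity of the ambient space. Second, your base change statement (b) is asserted for \emph{every} morphism of smooth frames $g:(T,\overline{T},\frak{T})\rightarrow(S,\overline{S},\frak{S})$ but is justified only by flat base change, on the grounds that ``$\frak{T}\rightarrow\frak{S}$ is flat near $T$, being smooth there.'' That is not what smoothness of the frame $(T,\overline{T},\frak{T})$ means: $\frak{T}$ is smooth over $\cur{V}$, not over $\frak{S}$, and Conjecture B1(F)c demands base change along arbitrary morphisms of triples --- for instance the inclusion of a closed point of $\frak{S}$, which is the case one actually cares about and is visibly not flat. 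This non-flat case is the genuinely difficult half of Tsuzuki's Theorem 4.1.1 and occupies most of his paper (a d\'{e}vissage exploiting the already-established coherence, not a formal flat base change argument). Since the stratification construction itself only invokes the smooth projections and functoriality of the base change maps, the isocrystal half of your conclusion survives this gap; the ``c'' in B1(F)c does not.
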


Most recently, Caro has proven the following version of Conjecture C(F).

\begin{theorem}[\cite{Car15a}, Th\'{e}or\`{e}mes 4.4.2 and 4.4.3] Conjecture C(F)c holds.\label{cfc}
\end{theorem}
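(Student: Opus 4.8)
The plan is to deduce the statement entirely within Caro's formalism for arithmetic $\cur{D}^\dagger$-modules, in three stages: localisation, stability of overcoherence together with base change, and stability of the isocrystal condition. Throughout I work with Frobenius structures, the Frobenius-free versions being handled by the parallel inputs coming from Caro's theory of holonomy without Frobenius.

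First I would reduce to a situation in which $f_+$ is literally one of Berthelot's relative pushforwards. Both categories occurring in Conjecture C(F) are defined by Zariski descent on $\overline{S}$ and on $S$, and $f$ is Cartesian, so one may localise until $(X,\overline{X})$ and $(S,\overline{S})$ are properly $d$-realisable and we are in the situation described just before the definition of $f_+$: a smooth and proper morphism $\tilde{\frak{P}}'\to\tilde{\frak{P}}$ of smooth and proper formal $\cur{V}$-schemes, an open $\frak{P}\subset\tilde{\frak{P}}$ through which $\overline{S}\hookrightarrow\tilde{\frak{P}}$ factors by a closed immersion, its preimage $\frak{P}'$, and divisors $T\subset\tilde{\frak{P}}'_0$, $D\subset\tilde{\frak{P}}_0$ with $X=\overline{X}\setminus T$, $S=\overline{S}\setminus D$ and $g^{-1}(D)\subset T$. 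There $f_+$ is by definition the pushforward $g_+$ along $g:\frak{P}'\to\frak{P}$ for differential operators overconvergent along the divisors, and via Lemma~\ref{annoying} the categories $D^b_{\mathrm{isoc}}(\cur{D}^\dagger_{(X,\overline{X})/K})$ and $D^b_{\mathrm{isoc}}(\cur{D}^\dagger_{(S,\overline{S})/K})$ are identified with full subcategories of $D^b_{\mathrm{coh}}(\cur{D}^\dagger_{\frak{P}'}(^\dagger T)_\Q)$ and $D^b_{\mathrm{coh}}(\cur{D}^\dagger_{\frak{P}}(^\dagger D)_\Q)$ respectively. For the base-change clause, the isomorphism $g^!f_+E\cong f'_+g'^!E$ attached to a Cartesian square is the standard proper base change isomorphism in Berthelot's theory; it is compatible with the descent data and with the equivalences of Lemma~\ref{annoying}, so once the bare Conjecture C(F) is known the $c$-version follows formally, the only extra point being that both sides land in $D^b_{\mathrm{isoc}}$.

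Next, $g_+E$ is still overcoherent: this is Caro's stability theorem asserting that, in the presence of Frobenius, the category $D^b_{\mathrm{surcoh}}$ — indeed the smaller category of overholonomic complexes — is stable under pushforward along a proper morphism, and properness of $\overline{X}\to\overline{S}$ forces the support of $g_+E$ into $\overline{S}$, so that $f_+E\in D^b_{\mathrm{surcoh}}(\cur{D}^\dagger_{(S,\overline{S})/K})$. The whole remaining difficulty is therefore to upgrade this to membership in $D^b_{\mathrm{isoc}}$: to show that the cohomology sheaves of $f_+E$ are overcoherent \emph{isocrystals}, not merely overcoherent $\cur{D}$-modules.

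That last step is the main obstacle, and I would attack it by propagating the isocrystal property outward from a dense open subscheme of $S$. By generic smoothness, de Jong's alteration theorem and shrinking of $\overline{S}$, there is a dense open $U\subseteq S$ over which $f$ acquires — possibly after a finite covering of $U$ — a model to which Tsuzuki's Theorem~4.1.1 applies; combining that with the de Rham descriptions of the rigid and convergent pushforwards in Propositions~\ref{derham} and~\ref{occ} and with the comparison functor $\mathrm{sp}_{(S,\overline{S}),+}$ of \S\ref{coeffs}, one identifies $f_+E|_U$ with $\mathrm{sp}_+$ of an overconvergent $F$-isocrystal, hence sees that it lies in $D^b_{\mathrm{isoc}}$ over $U$. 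To push this conclusion down and then propagate it over all of $S$ and along the boundary divisor $D$, one runs a Noetherian induction on the closed complement of $U$: the inductive step amounts to proving that no cohomology sheaf of $f_+E$ develops a subquotient supported on a proper closed subscheme, and this is verified by restriction to smooth curves, where one is again in the liftable, semistable-reduction situation. It is precisely this control of the behaviour of $f_+E$ along closed subsets of $S$ and along $D$, together with the need to have Frobenius available in order to invoke the stability, full-faithfulness and curve-testing results, that constitutes the hard technical core of the argument.
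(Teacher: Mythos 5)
This theorem is not proved in the paper at all: it is quoted from Caro, the ``proof'' being precisely the citation of Th\'{e}or\`{e}mes 4.4.2 and 4.4.3 of \cite{Car15a}. There is therefore no argument in the text to compare yours against, and what you have written is an attempt to reconstruct in outline a long and difficult theorem of Caro's. Judged on its own terms, your first two stages are acceptable as reductions: localising to the properly $d$-realisable situation and identifying $f_+$ with Berthelot's $g_+$ is exactly how the functor is defined, and stability of overcoherence (indeed overholonomicity) under proper push-forward is a legitimate prior input from earlier work of Caro and Caro--Tsuzuki. But note that in its original form that stability rests on Kedlaya's semistable reduction theorem and is available only \emph{with} Frobenius structures, so your remark that the Frobenius-free case is ``handled by parallel inputs'' conceals a genuine difficulty rather than resolving one; recall from \S\ref{conjs} that Conjecture C $\Rightarrow$ Conjecture CF, so the Frobenius-free statement is the \emph{stronger} one.

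The serious gap is in your third stage, which you correctly identify as the heart of the matter. Membership of $D^b_{\mathrm{isoc}}$ is not detected by the absence of subquotients supported on proper closed subschemes, nor by restriction to smooth curves, and you give no mechanism by which a Noetherian induction on the complement of a good dense open $U\subseteq S$ would close. The inductive step would require controlling both $(^\dagger Z)f_+E$ and $\mathbf{R}\underline{\Gamma}^\dagger_Z f_+E$ for $Z=S\setminus U$, and the latter is essentially equivalent to the statement over a smaller base \emph{together with} a comparison between $f_+$ and relative rigid cohomology compatible with the d\'{e}vissage --- precisely the kind of comparison whose general absence the introduction of this paper laments, and which the paper only establishes in special liftable situations (Lemma \ref{compconv} and its sequel). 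In other words, ``propagate from a dense open'' is where Caro's actual work lies, and your sketch replaces it with an assertion. The base-change clause is likewise not formal: for a non-smooth morphism $g$ of pairs one needs the overcoherence/overholonomicity machinery just to know that $g^!$ preserves the relevant categories. As it stands the proposal is a plausible table of contents for Caro's papers, not a proof.
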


It is also worth mentioning here that Shiho in \cite{Shi08a} proved a weaker version of Conjecture S(F), under certain assumptions on $f$ and $E$, whose statement is somewhat technical and which we will therefore not recall here. There is also a variant on Conjecture C(F) that Caro proved in \cite{Car15a}, which slightly relaxes the condition on $(X,\overline{X})$ and $(S,\overline{S})$ of of begin properly $d$-realisable, but depends on choices of embeddings into formal $\cur{V}$-schemes. 

There are a few more special cases of these conjectures which have been proven by Matsuda-Trihan and by Etesse.

\begin{theorem}[\cite{MT04}, Corollaire 3 and \cite{Ete02}, Th\'{e}or\`{e}me 7] In the situation of Conjecture OF, assume that $S$ is smooth, $\overline{S}$ is proper, $E=\cur{O}_{(X,\overline{X})/K}^\dagger$ is the constant isocrystal, and that the ramification index of $\cur{V}$ is $\leq p-1$. Then Conjecture OF holds in either of the following 2 cases:
\begin{enumerate}\item $S$ is an affine curve.
\item $X$ is an abelian scheme over $S$.
\end{enumerate}
\end{theorem}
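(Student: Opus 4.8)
The plan is to reduce both cases to the single assertion that the convergent $F$-isocrystal $M:=\mathbf{R}^qf_{\mathrm{conv}*}E$ attached to the constant isocrystal $E=\cur{O}^\dagger_{(X,\overline{X})/K}$ lies in the essential image of the restriction functor $F\textrm{-}\mathrm{Isoc}^\dagger((S,\overline{S})/K)\to F\textrm{-}\mathrm{Isoc}(S/K)$. Exactly as in the proof that Conjecture B1F implies Conjecture OF, this is local on $\overline{S}$ — one glues the local solutions using the full faithfulness of the Caro--Kedlaya restriction functor \cite{Car11} — so we may freely shrink $\overline{S}$ in what follows.

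\smallskip
\noindent\emph{Case (1): $S$ an affine curve.} Here overconvergence of $M$ is a question about its restriction to each of the finitely many punctured formal discs at the points of $\overline{S}\setminus S$: one must show this restriction has finite — equivalently, since we are dealing with the constant coefficient system, unipotent — local monodromy in the sense of the $p$-adic local monodromy theorem. First I would pass to an alteration: using de Jong, together with the base-change compatibility of $\mathbf{R}^qf_{\mathrm{conv}*}$, after replacing $S$ by a connected finite cover $S'$ and $X\times_S S'$ by a modification over the boundary one may assume that the resulting family extends to a proper, log-smooth morphism $\overline{X}\to\overline{S'}$ of fine log schemes, the log structure on $\overline{S'}$ being that of the boundary $\overline{S'}\setminus S'$ (semistable reduction over the curve). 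In that situation relative logarithmic crystalline cohomology exhibits $M|_{S'}$ as the isocrystal underlying a log-convergent $F$-isocrystal on $(\overline{S'},\overline{S'}\setminus S')$ with nilpotent residue along the boundary — the $p$-adic incarnation of the monodromy theorem for the trivial local system — and the hypothesis on the ramification index of $\cur{V}$ first enters here, keeping the comparison between logarithmic convergent and logarithmic crystalline cohomology well behaved. Next I would invoke the equivalence (Kedlaya, on the basis of Christol--Mebkhout and Matsuda) between overconvergent isocrystals on $(S',\overline{S'})$ and log-convergent isocrystals on $(\overline{S'},\overline{S'}\setminus S')$ with nilpotent residues, where $e\leq p-1$ again intervenes to rule out $p$-adic Liouville obstructions to the canonical extension; this shows $M|_{S'}$ is overconvergent. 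Finally one descends overconvergence back along $S'\to S$ using the known descent of overconvergence along finite generically \'etale covers of curves.

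\smallskip
\noindent\emph{Case (2): $X\to S$ an abelian scheme.} The Künneth decomposition for convergent $F$-isocrystals gives $M\cong\bigwedge^q\mathbf{R}^1f_{\mathrm{conv}*}E$, so since the category of overconvergent $F$-isocrystals is stable under exterior powers it suffices to treat $q=1$. Now $\mathbf{R}^1f_{\mathrm{conv}*}E$ is, up to duality, the $F$-isocrystal attached to the contravariant Dieudonné crystal of the $p$-divisible group $X[p^\infty]$, equivalently the relative de Rham realisation $H^1_{\mathrm{dR}}$ of the abelian scheme. The strategy is to produce a genuinely \emph{algebraic} (weakly complete) lift rather than a merely formal one: since $S$ is smooth affine and, after an auxiliary \'etale localisation and addition of level structure, the classifying morphism of $X\to S$ to the (smooth, quasi-projective over $\cur{V}$) moduli space of polarised abelian schemes with level structure lifts over the weak completion of a smooth affine chart of that moduli space, one pulls back the \emph{universal} abelian scheme there. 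Its relative de Rham cohomology then carries an overconvergent integrable connection essentially by construction, and one identifies it with $\mathbf{R}^1f_{\mathrm{conv}*}E$ via the crystalline comparison for abelian schemes — the step where $e\leq p-1$ is needed so that the relevant Dieudonné-theoretic comparison is an isomorphism rather than merely an isogeny. Descending the \'etale localisation and checking compatibility with Frobenius then completes the argument.

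\smallskip
\noindent\emph{Main obstacle.} In case (1) the genuine difficulty is not the local $p$-adic monodromy analysis but the two global reductions framing it: arranging log-smooth semistable reduction of $\overline{X}\to\overline{S}$ over the curve, which forces one onto an alteration $S'\to S$ of the given curve (itself a substantial input, and in characteristic $p$ it introduces possibly wild ramification that must be controlled), and then descending overconvergence from $S'$ back to $S$. In case (2) the subtlety is to extract a lift that is genuinely weakly complete rather than merely formal — so that overconvergence is manifest — and to check that the crystalline comparison for abelian schemes respects this extra structure. Full details of these two arguments are carried out in \cite{MT04} and \cite{Ete02} respectively.
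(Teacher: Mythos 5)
The paper does not prove this theorem: it is stated verbatim as a known result of Matsuda--Trihan and Etesse, with the proofs living entirely in \cite{MT04} and \cite{Ete02}. Your sketch is a fair outline of the strategies of those two references (alteration to semistable reduction, relative log-crystalline cohomology, and the nilpotent-residue/unipotent canonical extension plus descent for the curve case; K\"unneth reduction to $q=1$ and a weakly complete lift of the abelian scheme for the second case), and since you likewise defer all substantive steps to those references, there is nothing in the paper's own treatment to compare against or diverge from.
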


\begin{theorem}[\cite{Ete12}, Th\'{e}or\`{e}me 3.4.8.2] In the situation of Conjecture B1(F), assume that $S$ is smooth, and that $(S,\overline{S},\frak{S})$ is a Monsky-Washnitzer frame, with $\cur{S}$ the associated lift of $S$ to a smooth scheme over $\cur{V}$. Assume that $X\rightarrow S$ lifts to a flat morphism $\cur{X}\rightarrow \cur{S}$. Then Conjecture B1(F) holds.
\end{theorem}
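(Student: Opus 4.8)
The plan is to use the algebraic lift to replace the relative rigid cohomology by an algebraic relative de Rham cohomology, and then to extract overconvergence from the regularity of the resulting Gauss--Manin connection together with the Frobenius structure.

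First I would dispose of the formal points. For the smooth frame $(S,\overline{S},\frak{S})$, full faithfulness of $\mathrm{Isoc}^\dagger((S,\overline{S})/K)\to\mathrm{MIC}((S,\overline{S},\frak{S})/K)$ gives uniqueness of an overconvergent isocrystal with a prescribed realisation on $\frak{S}$ (and, when $\overline{S}$ is proper, independence of the frame once existence is known), and it also transports any Frobenius structure; so only existence is at stake. By finiteness of rigid cohomology---or by the algebraic model below---$\mathbf{R}^qf_{\frak{S},\rig*}E$ is a coherent $j_S^\dagger\cur{O}_{]\overline{S}[_\frak{S}}$-module with integrable connection, and Proposition \ref{occ}, applied with the Frobenius lift supplied by the Monsky--Washnitzer frame, identifies its restriction to $]S[_\frak{S}$ with the realisation of the convergent $F$-isocrystal $\mathbf{R}^qf_{\mathrm{conv}*}E$ produced by Ogus (we first restrict $E$ along $F\textrm{-}\mathrm{Isoc}^\dagger((X,\overline{X})/K)\to F\textrm{-}\mathrm{Isoc}(X/K)$). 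Since a coherent $j^\dagger\cur{O}$-module is determined by its restriction to $]S[_\frak{S}$, this reduces the theorem to showing that $\mathbf{R}^qf_{\mathrm{conv}*}E$ is overconvergent along $\overline{S}\setminus S$; in other words, the theorem follows from Conjecture OF for this particular datum.

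Next I would bring in the algebraic lift. Flatness of $\cur{X}\to\cur{S}$ together with smoothness and properness of $X\to S$ forces $\cur{X}\to\cur{S}$ to be smooth and proper, and since $\cur{X}$ is proper over the affine $\cur{S}$, formal GAGA algebraises the coherent realisation of $E$ on the $\pi$-adic completion $\widehat{\cur{X}}$---together with its connection, which is a morphism of coherent sheaves---to an algebraic coherent $\cur{O}_{\cur{X}_K}$-module with integrable connection $(\mathcal{E},\nabla)$. Relative rigid GAGA, cohomology and base change then identify the realisation of $\mathbf{R}^qf_{\mathrm{conv}*}E$ over $]S[_\frak{S}$ with the weak completion of the algebraic relative de Rham cohomology $\mathbf{R}^q\cur{f}_{\mathrm{dR}*}(\mathcal{E},\nabla)$ on $\cur{S}_K$, carrying its Gauss--Manin connection. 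So the whole question becomes: this algebraic module with integrable connection on the smooth affine lift $\cur{S}$ of $S$ is overconvergent along $\overline{S}\setminus S$.

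The hard part will be exactly this overconvergence. The Gauss--Manin connection of a smooth proper algebraic family has regular singularities along any boundary (Deligne, Katz), and the presence of a Frobenius structure pins down the exponents to be $p$-adically non-Liouville and the residues nilpotent in the logarithmic sense---precisely the conditions under which, by the $p$-adic theory of (geometric) regular connections (Christol--Mebkhout, Baldassarri--Chiarellotto, Kedlaya), the associated module over the weak completion of $\cur{S}$ extends to a coherent $j_S^\dagger\cur{O}_{]\overline{S}[_\frak{S}}$-module with overconvergent connection, i.e.\ an overconvergent $F$-isocrystal on $(S,\overline{S})$. The subtlety is that there is no algebraic lift of $\overline{S}$ on which to perform Deligne's canonical extension directly, so one must transfer this regularity to the rigid-analytic boundary $]\overline{S}[_\frak{S}$: concretely I would shrink $S$, use alterations to put $\overline{S}\setminus S$ into normal-crossings position, reduce the local statement to the case of good reduction treated by Berthelot's and Tsuzuki's theorems, and then glue. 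It is this transfer of regularity across the boundary that I expect to be the genuine obstacle.
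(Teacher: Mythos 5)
The paper itself offers no proof of this theorem --- it is quoted from Etesse --- but your proposal does not reconstruct one; it has two genuine gaps. First, your reduction of Conjecture B1(F) to Conjecture OF runs the implication backwards. Knowing that the convergent $F$-isocrystal $\mathbf{R}^qf_{\mathrm{conv}*}E$ extends to an overconvergent one does not tell you that the specific object $\mathbf{R}^qf_{\frak{S},\rig*}E$ --- a priori just some $j_S^\dagger\cur{O}_{]\overline{S}[_\frak{S}}$-module with integrable connection on a strict neighbourhood --- is coherent with \emph{overconvergent} connection, nor that it is the realisation of that extension: restriction to $]S[_\frak{S}$ is not full on coherent $j^\dagger\cur{O}$-modules with connection, and the Caro--Kedlaya full faithfulness you invoke applies only after both sides are known to be overconvergent isocrystals. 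This is exactly why the paper records Conjecture B1F $\Rightarrow$ Conjecture OF and not the converse, and why Corollary \ref{ofp} (OF in the projective case) does not upgrade to Corollary \ref{bp} without extra hypotheses. Second, and more seriously, the step extracting overconvergence from regularity of the Gauss--Manin connection is not an argument: the Christol--Mebkhout/Baldassarri--Chiarellotto/Kedlaya results concern one-dimensional boundaries or connections already satisfying strong radius-of-convergence conditions, and there is no general theorem producing an overconvergent extension along an arbitrary characteristic-$p$ boundary $\overline{S}\setminus S$ from algebraic regularity on a lift of $S$ alone; your proposed workaround (``shrink, alter, reduce to good reduction, glue'') is essentially a restatement of the open conjecture. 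Note also that the statement includes Conjecture B1 \emph{without} Frobenius structures, while your strategy leans on Frobenius at every stage.

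What the flat lift actually buys --- and what Etesse's proof uses --- is different in kind: the family lifts over the weak completion of $\cur{S}$, so both the coherence of $\mathbf{R}^qf_{\frak{S},\rig*}E$ (via formal/rigid GAGA and Kiehl finiteness applied to the lifted family) \emph{and} the overconvergent stratification are constructed directly, the Taylor isomorphism on $\frak{S}\times_{\cur{V}}\frak{S}$ being obtained by comparing the two pullbacks of the lifted family, exactly as in Berthelot's Th\'{e}or\`{e}me 5 and Tsuzuki's theorem quoted just above it in the paper. No regular-singularity input is needed; the lift is precisely what lets one bypass the analytic difficulties your sketch runs into. A minor further point: flatness plus properness of the special fibre does not make $\cur{X}\rightarrow\cur{S}$ proper (only its formal completion), so the GAGA step must be run on (weak) formal completions rather than on $\cur{X}_K$ as you describe.
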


\begin{remark} If $S$ is smooth, and $f:X\rightarrow S$ is any smooth and proper morphism which locally lifts to a flat morphism $\cur{X}\rightarrow \cur{S}$ of $\cur{V}$-schemes (for example, if $X$ is a complete intersection in some projective space over $S$), then this is enough to guarantee the existence of unique higher direct image isocrystals $\mathbf{R}^qf_{\rig*}E\in (F\textrm{-})\mathrm{Isoc}^\dagger(X/K)$ 
\end{remark}

\begin{theorem} If $S=\overline{S}$ then Conjectures BF, B1F, SF and S1F are true.
\end{theorem}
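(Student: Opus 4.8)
The plan is to reduce each of the four conjectures, under the hypothesis $S=\overline{S}$, to Ogus's construction of convergent higher direct images, using the equivalence between overconvergent $F$-isocrystals on a proper $k$-variety $S$ (viewed as the pair $(S,S)$) and convergent $F$-isocrystals on $S$ recalled in \S\ref{coeffs} (see \cite{Ber96b}, 2.3.4).

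First I would observe that in Conjectures B1F, SF and S1F the morphism of pairs $(X,\overline{X})\to(S,\overline{S})$ is Cartesian, so the assumption $S=\overline{S}$ forces $X=\overline{X}$; in Conjecture BF the same conclusion is automatic, since $f$ is proper and $S=\overline{S}$ is proper over $k$, hence $X$ is proper and is its own compactification. Thus in every case $f\colon X\to S$ is a \emph{smooth and proper} morphism of $k$-varieties and the coefficient object $E$ lies in $F\textrm{-}\mathrm{Isoc}^\dagger((X,X)/K)\cong F\textrm{-}\mathrm{Isoc}(X/K)$. Regarding $E$ as a convergent $F$-isocrystal, Ogus's construction (recalled in \S3, together with its compatibility with base change) supplies $\mathbf{R}^qf_{\mathrm{conv}*}E\in F\textrm{-}\mathrm{Isoc}(S/K)$, and I take the candidate higher direct image $\mathbf{R}^qf_{\rig*}E$ to be the overconvergent $F$-isocrystal on $(S,S)/K$ corresponding to it under the above equivalence. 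Intrinsicness (independence of the frame, dependence only on $f$ and $E$) is then built in, since $\mathbf{R}^qf_{\mathrm{conv}*}E$ is intrinsic to $f$ and the equivalence used depends only on $S$, and the uniqueness asserted in each statement follows from full faithfulness of the realisation functors recalled in \S\ref{coeffs}.

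It then remains to match this candidate with the push-forwards $\mathbf{R}^qf_{\frak{S},\rig*}E$ appearing in each conjecture. For Conjectures BF and B1F one must see that on a smooth frame $(S,S,\frak{S})$ the realisation of $\mathbf{R}^qf_{\rig*}E$ is the module with connection $\mathbf{R}^qf_{\frak{S},\rig*}E$; since $S=\overline{S}$ the functor $j^\dagger_S$ is the identity (as $]S[_\frak{S}=]\overline{S}[_\frak{S}$), and this is precisely Proposition \ref{occ} (equivalently Proposition \ref{pfconv}), at least when $\frak{S}$ carries a lift of Frobenius. To dispense with that extra hypothesis I would use that the required identification is Zariski-local on $\frak{S}$: a smooth formal $\cur{V}$-scheme admits a Frobenius lift locally, the realisation functor of Proposition \ref{convcon} and the formation of $\mathbf{R}^qf_{\frak{S},\rig*}E$ both commute with Zariski restriction, and the locally obtained Frobenius structures agree because they are restrictions of the single $F$-isocrystal $\mathbf{R}^qf_{\mathrm{conv}*}E$ already defined on $S$. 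For Conjectures SF and S1F the same argument is run frame by frame: for a frame $(T,\overline{T},\frak{T})$ over $(S,\overline{S})=(S,S)$ with $\frak{T}$ smooth near $T$, the base change $(X_T,\overline{X}_{\overline{T}})\to(T,\overline{T})$ is again a Cartesian morphism of pairs with proper closed part and smooth open part, so Proposition \ref{occ} identifies $\mathbf{R}^qf'_{\frak{T},\rig*}E|_{(X_T,\overline{X}_{\overline{T}})}$ with the realisation on $(T,\overline{T},\frak{T})$ of $\mathbf{R}^qf'_{\mathrm{conv}*}(E|_{X_T})$, which by base-change compatibility of Ogus's construction is the pullback of $\mathbf{R}^qf_{\rig*}E$; the overconvergent stratification carried by $\mathbf{R}^qf_{\rig*}E$ as an isocrystal then matches the one appearing in Shiho's statement because the comparison over $\frak{T}\times_{\cur{V}}\frak{T}$ is itself an instance of base change.

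I expect the only point genuinely needing care — rather than a serious obstacle — to be the removal of the Frobenius-lift hypothesis in Proposition \ref{occ}, that is, the locality-plus-gluing argument identifying $\mathbf{R}^qf_{\frak{S},\rig*}E$ with the realisation of the globally defined $F$-isocrystal $\mathbf{R}^qf_{\mathrm{conv}*}E$. The only other nuisance is the purely formal task of matching the explicit base-change transition morphisms in Conjectures SF and S1F with the base-change isomorphisms of Ogus's construction; everything else is a direct translation across the equivalence $F\textrm{-}\mathrm{Isoc}^\dagger((S,S)/K)\cong F\textrm{-}\mathrm{Isoc}(S/K)$.
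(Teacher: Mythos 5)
Your proposal is correct and follows essentially the same route as the paper, which simply deduces the theorem from Proposition \ref{occ} together with the equivalence $F\textrm{-}\mathrm{Isoc}^\dagger((S,S)/K)\cong F\textrm{-}\mathrm{Isoc}(S/K)$; you have merely spelled out the details (the reduction to $X=\overline{X}$, the frame-by-frame matching, and the local removal of the Frobenius-lift hypothesis) that the paper leaves implicit.
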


\begin{proof} Follows more or less immediately from Proposition \ref{occ}.
\end{proof}

\section{Main results}

We start with the following lemma.

\begin{lemma} \label{compconv} Let $\frak{S}$ be a smooth affine formal $\cur{V}$-scheme with special fibre $S$, and let $f:X\rightarrow S$ be a smooth and projective morphism of $k$-varieties, of constant relative dimension $d$. Then for any convergent isocrystal $E\in \mathrm{Isoc}(X/K)$, with associated arithmetic $\cur{D}$-module $\tilde{E}\in D^b_{\mathrm{isoc}}(\cur{D}_{X/K})$, there is a natural isomorphism
$$ \mathbf{R}^qf_{\frak{S},\mathrm{conv}*}E \cong \cur{H}^{q-d}(f_+\tilde{E})
$$
of $\cur{O}_{\frak{S},\Q}$-modules with integrable connection. This is moreover compatible with base change $\frak{T}\rightarrow \frak{S}$ when $\frak{T}$ is also smooth and affine.
\end{lemma}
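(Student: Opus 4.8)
The plan is to compare both sides with relative de Rham (or rather relative Monsky–Washnitzer / rigid) cohomology of a smooth compactification, using the constructions recalled in \S2--\S3. First I would reduce to the projective case in the strongest possible form: since $f$ is smooth and projective, Zariski-locally on $X$ (and after shrinking, compatibly with base change on $\frak{S}$, which stays smooth affine) we may factor $f$ through a closed immersion $X\hookrightarrow \P^N_S$, and lift this to a closed immersion $X\hookrightarrow \P^N_{\frak{S}}=:\overline{\frak{X}}$ of formal $\cur{V}$-schemes, with $\frak{X}$ an open formal subscheme of $\overline{\frak{X}}$ smooth over $\frak{S}$ in a neighbourhood of $X$ (here one uses smoothness of $f$ to lift locally, and glues; the constant relative dimension $d$ makes $\P^N_{\frak{S}}\to\frak{S}$ have a well-defined relative dimension). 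This puts us in the `properly $d$-realisable' setting relative to $\frak{S}$, with a divisor $T\subset\overline{\frak{X}}_0$ such that $X=\overline{\frak{X}}_0\setminus T$, and lets us use the explicit descriptions of $\mathrm{sp}_{(X,\overline X),+}$ in the two bulleted cases of \S2.

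Next I would compute the left-hand side. By Proposition~\ref{derham} (applied to the smooth formal lift $\frak{X}\to\frak{S}$) and the remark following Proposition~\ref{convcon}, the realisation $E_{\frak{X}}$ of $E\in\mathrm{Isoc}(X/K)$ is a coherent $\cur{O}_{\frak{X},\Q}$-module with integrable connection relative to $\frak{S}$, and
$\mathbf{R}^qf_{\frak{S},\mathrm{conv}*}E\cong\mathbf{R}^qf_*(E_{\frak{X}}\otimes\Omega^*_{\frak{X}/\frak{S}})$ as $\cur{O}_{\frak{S},\Q}$-modules with Gauss--Manin connection. For the right-hand side, I would unwind the $\cur{D}$-module push-forward: by the description of $\mathrm{sp}_{(X,\overline X),+}$ in case~\ref{sp1}/\ref{sp2} of \S2, $\tilde E$ is the $\cur{D}^\dagger$-module on $\overline{\frak{X}}$ attached to the integrable connection $E_{\frak{X}}$ (extended overconvergently along $T$), supported on $X$, and the push-forward $f_+\tilde E$ is computed by the transfer bimodule $\cur{D}^\dagger_{\overline{\frak{X}}\to\overline{\frak{S}},\Q}$. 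The standard identification of the transfer bimodule with the relative de Rham complex (the Spencer-type resolution, as in \S4.3 of \cite{Ber02}; see also Caro's comparison results) gives
$f_+\tilde E\cong \mathbf{R}f_*\bigl(E_{\frak{X}}\otimes\Omega^*_{\frak{X}/\frak{S}}[\,\text{shift}\,]\bigr)$, and tracking the normalisation conventions for $\cur{D}$-module push-forward produces exactly the shift by the relative dimension $d$, so that $\cur{H}^{q-d}(f_+\tilde E)\cong\mathbf{R}^qf_*(E_{\frak{X}}\otimes\Omega^*_{\frak{X}/\frak{S}})$. Comparing with the previous paragraph yields the asserted isomorphism; the fact that it respects the connection follows because both sides carry the Gauss--Manin connection and the comparison morphism is constructed functorially from the de Rham complex.

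Finally, compatibility with base change $\frak{T}\to\frak{S}$ (for $\frak{T}$ smooth affine) I would get from functoriality of each of the ingredients: Proposition~\ref{derham}(2) is compatible with base change on the base formal scheme (the relative de Rham complex pulls back), and the $\cur{D}$-module push-forward satisfies base change by the projection-formula / flat-base-change properties of the transfer bimodule $\cur{D}^\dagger_{\overline{\frak{X}}\to\overline{\frak{S}},\Q}$ (one may also invoke Theorem~\ref{cfc}, i.e.\ Conjecture C(F)c, which includes the base change statement for $f_+$). Since all the identifications above are natural in $\frak{S}$, they glue to a base-change-compatible isomorphism; the global statement (dropping the local factorisation through $\P^N_{\frak{S}}$) then follows by Zariski descent on $X$, using that $\mathbf{R}^qf_{\frak{S},\mathrm{conv}*}$, $f_+$, and the comparison maps are all local on $X$.

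The main obstacle I anticipate is the precise identification of the $\cur{D}$-module transfer bimodule with the relative de Rham complex together with the correct homological shift and sign conventions: matching Berthelot--Caro's normalisation of $f_+$ (which is set up for not-necessarily-smooth, not-necessarily-equidimensional situations) against the naive relative de Rham complex requires care, and is exactly where the hypothesis of \emph{constant} relative dimension $d$ and the smoothness of $\frak{X}\to\frak{S}$ are used. A secondary subtlety is ensuring that the open formal subscheme $\frak{X}$ can be chosen smooth over $\frak{S}$ (not just over $\cur{V}$) in a neighbourhood of $X$ compatibly with base change, which is where shrinking on $\frak{S}$ and the affineness hypotheses come in.
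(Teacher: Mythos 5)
Your proposal follows essentially the same route as the paper's proof: embed $X$ into a projective space over $\frak{S}$, localise so that $X$ lifts to a closed immersion $\frak{X}\hookrightarrow\frak{P}$ of smooth formal schemes, identify $\mathbf{R}^qf_{\frak{S},\mathrm{conv}*}E$ with relative de Rham cohomology of $\frak{X}/\frak{S}$ via Proposition \ref{derham}, and identify $f_+\tilde{E}$ with the same complex shifted by $d$ via the de Rham/Spencer description of the push-forward ((4.3.6) of \cite{Ber02}), using the Berthelot--Kashiwara compatibility (4.3.6.2) to pass from the ambient projective space down to $\frak{X}$. The one imprecision is in your description of the setup: the relevant pair here is $(X,X)$, so the first bulleted description of $\mathrm{sp}_{(X,X),+}$ applies and no divisor $T$ with $X=\overline{\frak{X}}_0\setminus T$ enters (the divisor in the properly $d$-realisable structure is at infinity, disjoint from $X$), and the local lift $\frak{X}$ must be a \emph{closed} smooth formal subscheme of an open of $\P^N_{\frak{S}}$ rather than an open formal subscheme --- but this does not affect the substance of the argument.
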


\begin{remark}
\begin{enumerate}\item The hypotheses imply that $(S,S)$ and $(X,X)$ are both properly $d$-realisable pairs, hence we do indeed have a push-forward functor
$$ f_+: D^b_\mathrm{surcoh}(\cur{D}_{X/K})\rightarrow D^b_\mathrm{surcoh}(\cur{D}_{S/K}).
$$
\item We have used Lemma \ref{annoying} to identify $D^b_\mathrm{surcoh}(\cur{D}_{S/K})$ with $D^b_\mathrm{surcoh}(\cur{D}^\dagger_{\frak{S},\Q})$, we may therefore consider $\cur{H}^{q-d}(f_+\tilde{E})$ as an $\cur{O}_{\frak{S},\Q}$-module with integrable connection.
\end{enumerate} 
\end{remark}

\begin{proof} Choose closed embeddings $\frak{S}\hookrightarrow \widehat{\A}^n_\cur{V}$ and $X\rightarrow \P^m_S$. Then we may identify $\tilde{E}$ with a certain $\cur{D}$-module on $\frak{P}:=\widehat{\P}^m_{\widehat{\A}^n_\cur{V}}$, supported on $X$, and the push-forward $\tilde{E}$ is given in terms of the functor
$$ g_+: D^b_\mathrm{coh}(\cur{D}^\dagger_{\frak{P},\Q}) \rightarrow D^b(\cur{D}^\dagger_{\widehat{\A}^n_{\cur{V}},\Q})
$$
where $g:\frak{P}\rightarrow \widehat{\A}^n_\cur{V}$ is the projection. Now, the formation of $g_+\tilde{E}\in D^b(\cur{D}^\dagger_{\widehat{\A}^n_{\cur{V}},\Q})$ is local on $\frak{P}$, and the formation of $\mathbf{R}^qf_{\frak{S},\mathrm{conv}*}E$ is local on $X$, therefore we may replace $\frak{P}$ by an open formal subscheme, and $X$ by its intersection with this subscheme, such that $X\hookrightarrow \frak{P}$ lifts to a closed immersion $\frak{X}\hookrightarrow \frak{P}$ of smooth formal $\cur{V}$-schemes. 

But now by the compatibility of the Berthelot-Kashiwara equivalence with push-forwards (which is nothing more than (4.3.6.2) of \cite{Ber02}), we may replace the morphism $g:\frak{P}\rightarrow \widehat{\A}^n_\cur{V}$ by the induced morphism $g:\frak{X}\rightarrow \frak{S}$, and using the concrete description of $\mathrm{sp}_{(X,X),+}$ on page \pageref{sp1}, the claim follows immediately from Proposition \ref{derham} and (4.3.6) of \cite{Ber02}.
\end{proof}

\begin{remark} Note that compatibility with base change means that when $\frak{S}$ is equipped with a lift of the absolute Frobenius on $S$, we can promote the above isomorphism to an isomorphism 
$$ \mathbf{R}^qf_{\frak{S},\mathrm{conv}*}E \cong \cur{H}^{q-d}(f_+\tilde{E})(d)
$$
of (realisations of) convergent $F$-isocrystals.
\end{remark}

Hence we get Conjecture OF in the projective case as follows.

\begin{corollary} \label{ofp} In the situation of Conjecture OF assume that $X\rightarrow S$ is projective. Then Conjecture OF holds.
\end{corollary}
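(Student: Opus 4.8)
The plan is to bootstrap Caro's Theorem~\ref{cfc} on arithmetic $\cur{D}$-modules to Ogus' convergent pushforward, using Lemma~\ref{compconv} as a dictionary between the two. Since the conclusion of Conjecture OF is local on $\overline{S}$ — and since $F\textrm{-}\mathrm{Isoc}^\dagger$ satisfies Zariski descent on $\overline{S}$ while the restriction functor $F\textrm{-}\mathrm{Isoc}^\dagger((S,\overline{S})/K)\rightarrow F\textrm{-}\mathrm{Isoc}(S/K)$ is fully faithful (Caro--Kedlaya) — I would first reduce to the situation where $(S,\overline{S})$ and $(X,\overline{X})$ are properly $d$-realisable, $S$ is smooth and affine (so that $X=\overline{X}\times_{\overline{S}}S$ is smooth over $k$ as well), and $S$ lifts to a smooth affine formal $\cur{V}$-scheme $\frak{S}$ carrying a lift of absolute Frobenius.

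Next I would run the $\cur{D}$-module side. Set $\tilde{E}:=\mathrm{sp}_{(X,\overline{X}),+}(E)\in F\textrm{-}D^b_\mathrm{isoc}(\cur{D}^\dagger_{(X,\overline{X})/K})$. By Caro's Theorem~\ref{cfc} (Conjecture C(F)c) we have $f_+\tilde{E}\in F\textrm{-}D^b_\mathrm{isoc}(\cur{D}^\dagger_{(S,\overline{S})/K})$, so each cohomology sheaf $\cur{H}^{q-d}(f_+\tilde{E})(d)$ is an overcoherent $F$-isocrystal on $(S,\overline{S})/K$; since $S$ is smooth we may therefore set
$$ \cur{G}^q:=\mathrm{sp}_{(S,\overline{S}),+}^{-1}\left(\cur{H}^{q-d}(f_+\tilde{E})(d)\right)\in F\textrm{-}\mathrm{Isoc}^\dagger((S,\overline{S})/K). $$
Conjecture OF, in this local situation, will follow once I show that the restriction of $\cur{G}^q$ to $F\textrm{-}\mathrm{Isoc}(S/K)$ is isomorphic to $\mathbf{R}^qf_{\mathrm{conv}*}E$.

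To prove this I would compare realisations on the smooth lift $\frak{S}$. On one side, Proposition~\ref{pfconv} identifies $(\mathbf{R}^qf_{\mathrm{conv}*}E)_\frak{S}$ with $\mathbf{R}^qf_{\frak{S},\mathrm{conv}*}(E|_X)$ together with its canonical connection and Frobenius, and the Frobenius-equivariant form of Lemma~\ref{compconv} rewrites the latter as $\cur{H}^{q-d}(f_+\widetilde{E|_X})(d)$, where $\widetilde{E|_X}\in F\textrm{-}D^b_\mathrm{isoc}(\cur{D}_{X/K})$ denotes the $\cur{D}$-module attached to the convergent isocrystal $E|_X$ on $X$. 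On the other side, restriction along the open immersion $S\hookrightarrow\overline{S}$ commutes both with $f_+$ (a special case of the base-change clause of Conjecture C(F)c) and with $\mathrm{sp}_+$, and the restriction of $\tilde{E}=\mathrm{sp}_{(X,\overline{X}),+}(E)$ from $(X,\overline{X})$ to $(X,X)$ is $\mathrm{sp}_{(X,X),+}(E|_X)=\widetilde{E|_X}$; hence
$$ \cur{H}^{q-d}(f_+\tilde{E})\big|_{(S,S)}\cong \cur{H}^{q-d}\big(f_+\widetilde{E|_X}\big). $$
Unwinding the explicit description of $\mathrm{sp}_{(S,S),+}$ on $\frak{S}$ (page~\pageref{sp1}) and the identification of Lemma~\ref{annoying}, the left-hand side, after the Tate twist $(d)$, is exactly $(\cur{G}^q|_S)_\frak{S}$ as an $\cur{O}_{\frak{S},\Q}$-module with integrable connection and Frobenius, so it agrees with $(\mathbf{R}^qf_{\mathrm{conv}*}E)_\frak{S}$. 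Since a convergent $F$-isocrystal on $S$ is determined by its realisation on the smooth lift $\frak{S}$ (Proposition~\ref{convcon} together with its analogue with Frobenius structures), the two convergent $F$-isocrystals coincide, which proves the local statement; gluing the $\cur{G}^q$ over a Zariski cover of $\overline{S}$ via full faithfulness of the restriction functor then yields the general case.

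I do not expect any single step to be a serious obstacle: Theorem~\ref{cfc} and Lemma~\ref{compconv} carry the real content, and what remains is essentially organisational. The delicate point is making sure that Ogus' globally defined convergent pushforward agrees, locally, with the $\cur{D}$-module pushforward, and that passage to the open $S\subset\overline{S}$ is compatible with both $f_+$ and $\mathrm{sp}_+$, so that the locally constructed overconvergent isocrystal $\cur{G}^q$ really does restrict to $\mathbf{R}^qf_{\mathrm{conv}*}E$. The base-change compatibilities already present in Conjecture C(F)c and in Lemma~\ref{compconv} are exactly what make this bookkeeping go through; the genuine limitations are that the argument lives inside the projective case (so that Lemma~\ref{compconv} applies) and needs $S$ smooth (so that $\cur{G}^q$ exists as an honest overconvergent isocrystal rather than merely as an overcoherent $\cur{D}$-module).
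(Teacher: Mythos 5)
Your overall strategy coincides with the paper's: apply $\mathrm{sp}_{(X,\overline{X}),+}$, invoke Caro's Theorem \ref{cfc} to land in $F\textrm{-}D^b_\mathrm{isoc}(\cur{D}^\dagger_{(S,\overline{S})/K})$, and then identify the realisation of $\mathbf{R}^qf_{\mathrm{conv}*}E$ on a smooth affine Frobenius-equipped lift $\frak{S}$ with $\cur{H}^{q-d}(f_+\tilde{E})(d)$ via Proposition \ref{pfconv} and the Frobenius-equivariant form of Lemma \ref{compconv}, concluding by full faithfulness of the restriction to convergent $F$-isocrystals. The local comparison you spell out (restriction along $S\hookrightarrow\overline{S}$ commuting with $f_+$ and with $\mathrm{sp}_+$) is exactly what the paper leaves implicit in the phrase ``comes from an object in $F\textrm{-}D^b_\mathrm{isoc}(\cur{D}^\dagger_{(S,\overline{S})/K})$'', and that part is fine.

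The gap is in your opening reduction. In Conjecture OF the base $S$ is an arbitrary $k$-variety, and no amount of localising on $\overline{S}$ or appealing to Zariski descent will make it smooth: smoothness is not acquired by passing to an open cover. Yet everything downstream requires it — Proposition \ref{pfconv} and Lemma \ref{compconv} need a \emph{smooth} affine formal lift $\frak{S}$ of $S$, and the equivalence $\mathrm{sp}_{(S,\overline{S}),+}$ you use to produce $\cur{G}^q$ is only available for $S$ smooth. You tacitly concede this at the end (``needs $S$ smooth''), but then the argument proves a strictly weaker statement than the corollary. The paper closes this gap with two extra dévissage steps: first, Chow's lemma to blow up $\overline{X}$ outside $X$ so that $\overline{X}\rightarrow\overline{S}$ becomes projective (this does not change $\mathrm{Isoc}^\dagger((X,\overline{X})/K)$ and is also needed to make $(X,\overline{X})$ properly $d$-realisable); second, and crucially, a de Jong alteration $\overline{S}'\rightarrow\overline{S}$ with $\overline{S}'$ smooth, combined with Caro's descent theorem for overconvergence (Th\'{e}or\`{e}me 2.1.3 of \cite{Car11}) and base change for $\mathbf{R}^qf_{\mathrm{conv}*}E$, which reduces overconvergence of the push-forward over $S$ to overconvergence of its pullback over $S'$. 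Only after that alteration is it legitimate to localise and assume the smooth affine lift with Frobenius that your comparison argument runs on. You need to add this reduction (or an equivalent one) for the proof to cover the statement as written.
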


\begin{proof} We may assume that $X\rightarrow S$ has constant relative dimension. Thanks to Chow's lemma, we may blow up $\overline{X}$ outside of $X$ to obtain a projective morphism $\overline{X}\rightarrow \overline{S}$, this does not change the category $\mathrm{Isoc}^\dagger((X,\overline{X})/K)$ and we may therefore assume that $\overline{X}\rightarrow \overline{S}$ is projective. By choosing a convenient alteration $\overline{S}'\rightarrow \overline{S}$ and using Th\'{e}or\`{e}me 2.1.3 of \cite{Car11}, together with base change for $\mathbf{R}^qf_{\mathrm{conv}*}E$, we may assume that $\overline{S}$ is smooth. The question is also local on $\overline{S}$, which we may therefore assume to have a smooth affine lift $\overline{\frak{S}}$ over $\cur{V}$, and that $\overline{\frak{S}}$ is equipped with a lift of the absolute Frobenius of $\overline{S}$. Let $\frak{S}$ denote the open subscheme of $\overline{\frak{S}}$ corresponding to $S$. 

Then question is also local on $S$, hence (after further localising on $\overline{\frak{S}}$) we may assume that there exists a locally closed immersion $\overline{\frak{S}}\rightarrow \widehat{\P}^N_\cur{V}$ and a divisor $D\subset \P^N_k$ such that $S=\overline{S}\setminus D$. Hence $(S,\overline{S})$ is properly $d$-realisable. Since $\overline{X}\rightarrow \overline{S}$ is projective, and 
$$ \xymatrix{ X \ar[r] \ar[d] & \overline{X} \ar[d] \\ S\ar[r] & \overline{S} }
$$
is Cartesian, it follows that the pair $(X,\overline{X})$ is also properly $d$-realisable. Hence for any $\tilde{E}\in F\textrm{-}D^b_\mathrm{isoc}(\cur{D}^\dagger_{(X,\overline{X})/K})$, we have $f_+\tilde{E}\in F\textrm{-}D^b_\mathrm{isoc}(\cur{D}^\dagger_{(S,\overline{S})/K})$. Therefore, using Lemma \ref{compconv} and the proceeding remark, together with Proposition \ref{pfconv}, the realisation $\mathbf{R}^qf_{\frak{S},\mathrm{conv}*}E$ of the convergent $F$-isocrystal $\mathbf{R}^qf_{\mathrm{conv}*}E$ comes from an object in $F\textrm{-}D^b_\mathrm{isoc}(\cur{D}^\dagger_{(S,\overline{S})/K})$, and is thus overconvergent.
\end{proof}

We now turn to a version of Conjecture B1(F).

\begin{lemma} Let $(S,\overline{S},\frak{S})$ be a $\cur{V}$-frame such that $\frak{S}$ is smooth and affine, $\overline{S}=\frak{S}_0$, and $S=\overline{S}\setminus \tilde{D}$ for some divisor $\tilde{D}$ inside some projective embedding $\frak{S}\hookrightarrow \P^N_\cur{V}$. Let $(X,\overline{X})\rightarrow (S,\overline{S})$ be a Cartesian morphism of pairs, with $\overline{X}\rightarrow \overline{S}$ smooth and projective. Let $E\in \mathrm{Isoc}^\dagger((X,\overline{X})/K)$ with associated arithmetic $\cur{D}$-module $\tilde{E}\in D^b_\mathrm{isoc}(\cur{D}^\dagger_{(X,\overline{X})/K})$. Then there is a canonical isomorphism
$$ \mathbf{R}^qf_{\frak{S},\rig*}E \cong \mathrm{sp}^*\cur{H}^{q-d}(f_+\tilde{E})
$$
of $j_S^\dagger\cur{O}_{\frak{S}_K}$-modules with integrable connection.
\end{lemma}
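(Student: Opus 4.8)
The plan is to reduce the overconvergent statement to the convergent statement of Lemma~\ref{compconv} by exploiting the description of $\mathrm{sp}_{(X,\overline{X}),+}$ in the second bullet point on page~\pageref{sp2}. Since $\frak{S}$ is smooth and $\tilde D$ is a divisor of $\overline{S}=\frak{S}_0$, the functor $\mathrm{sp}_*:\frak{S}_K\to\frak{S}$ identifies $\mathrm{Isoc}^\dagger((S,\overline{S})/K)$ with a full subcategory of the coherent $\cur{O}_{\frak{S},\Q}(^\dagger D)$-modules with integrable connection, and under this identification $\cur{H}^{q-d}(f_+\tilde E)$, which lives in $D^b_{\mathrm{surcoh}}(\cur{D}^\dagger_{\frak{S}}(^\dagger D)_\Q)$ via Lemma~\ref{annoying}, is simply the integrable connection obtained by applying $\mathrm{sp}^*$. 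So the content of the lemma is really the identity
$$ \mathbf{R}^qf_{\frak{S},\rig*}E \cong \mathrm{sp}^*\cur{H}^{q-d}(f_+\tilde E) $$
of integrable connections on $]S[_\frak{S}=\frak{S}_K$ (note $\frak{S}_K=]\overline{S}[_\frak{S}$ and $j_S^\dagger$ restricts everything appropriately along $D$), and the right-hand side can be computed after pushing $\cur{H}^{q-d}(f_+\tilde E)$, viewed on $\frak{S}$ with its overconvergent singularities along $D$, down from the rigid generic fibre.

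First I would set up the geometry exactly as in the proof of Lemma~\ref{compconv}: choose the projective embedding $\frak{S}\hookrightarrow\P^N_\cur{V}$ in which $\tilde D$ is cut out, choose a closed $\overline X\hookrightarrow\widehat{\P}^m_{\frak{S}}$, and let $\frak{P}$ be (an open of) $\widehat{\P}^m_{\frak{S}}$, so that $\tilde E$ becomes a $\cur{D}$-module on $\frak{P}$ with overconvergent singularities along the pullback $T$ of $D$, supported on $\overline X$, and $f_+\tilde E=g_+\tilde E$ for $g:\frak{P}\to\frak{S}$ the structure map. Both $g_+$ and the rigid push-forward $\mathbf{R}^qf_{\frak{S},\rig*}$ are local on $\overline X$, so I can pass to an open where $\overline X\hookrightarrow\frak{P}$ lifts to a closed immersion $\overline{\frak{X}}\hookrightarrow\frak{P}$ of smooth formal $\cur{V}$-schemes; by compatibility of Berthelot--Kashiwara with push-forward ((4.3.6.2) of \cite{Ber02}) I replace $g:\frak{P}\to\frak{S}$ by the induced $\bar g:\overline{\frak{X}}\to\frak{S}$. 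Now I am in precisely the situation of the second bullet on page~\pageref{sp2}: $\tilde E$ corresponds to a coherent $\cur{O}_{\overline{\frak{X}},\Q}(^\dagger T)$-module with integrable connection, namely the realisation $E_{\overline{\frak{X}}}$ of the overconvergent isocrystal $E$ on $(X,\overline X,\overline{\frak{X}})$, and $g_+$ on such objects is, by Th\'{e}or\`{e}me 4.4.5 of \cite{Ber96a}, computed by the de Rham complex $E_{\overline{\frak{X}}}\otimes\Omega^\bullet_{\overline{\frak{X}}/\frak{S}}$ tensored up to the overconvergent differential operators — whose cohomology after $\mathrm{sp}^*$ is exactly $\mathbf{R}\bar g_*(\mathrm{sp}^*E_{\overline{\frak{X}}}\otimes\Omega^\bullet_{]\overline X[_{\overline{\frak{X}}}/]\overline S[_\frak{S}})=\mathbf{R}f_{\frak{S},\rig*}E$, up to the degree shift by $d=$ relative dimension coming from the normalisation of $g_+$.

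The main obstacle is the bookkeeping around the divisor $D$ and the overconvergence functor $j^\dagger_S$: one must check carefully that $\mathrm{sp}^*$ applied to an $\cur{O}_{\frak{S},\Q}(^\dagger D)$-module with connection really does yield a $j_S^\dagger\cur{O}_{]\overline S[_\frak{S}}$-module with connection compatibly with the de Rham complex computation, and that the identification of $g_+$ with the relative de Rham complex in Th\'{e}or\`{e}me 4.4.5 of \cite{Ber96a} is compatible with the one in (4.3.6) of \cite{Ber02} used in Lemma~\ref{compconv} — essentially this is the statement that the $\cur{O}(^\dagger D)$-linear and $\cur{O}$-linear de Rham complexes agree after applying $(^\dagger D)$, which follows since $\Omega^\bullet_{\overline{\frak{X}}/\frak{S}}$ is a complex of locally free $\cur{O}_{\overline{\frak{X}}}$-modules and $(^\dagger T)$ is exact on coherent $\cur{D}^\dagger$-modules. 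Once this compatibility is in hand the isomorphism of connections is forced, and the integrable-connection structure matches because on both sides it is the Gauss--Manin connection; the whole argument is then a mild upgrade of Lemma~\ref{compconv} from the proper case $S=\overline S$ to the case where $S$ is the complement of a divisor.
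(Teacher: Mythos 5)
Your setup---embedding $\overline{X}$ into $\widehat{\P}^m_{\frak{S}}$, localising on the ambient formal scheme so that $\overline{X}\rightarrow\overline{S}$ lifts to a smooth morphism $\frak{X}\rightarrow\frak{S}$, invoking the compatibility of Berthelot--Kashiwara with push-forwards, and using the explicit description of $\mathrm{sp}_{(X,\overline{X}),+}$ to identify $\tilde{E}$ with $\mathrm{sp}_*E$---matches the paper's proof exactly, as does the use of (4.3.6) of \cite{Ber02} to identify $\cur{H}^{q-d}(f_+\tilde{E})$ with $\mathbf{R}^qf_*(\tilde{E}\otimes\Omega^*_{\frak{X}/\frak{S}})$. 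The gap is in the sentence asserting that the cohomology of this de Rham complex ``after $\mathrm{sp}^*$'' is ``exactly'' $\mathbf{R}\bar{g}_*(\mathrm{sp}^*E_{\overline{\frak{X}}}\otimes\Omega^\bullet_{]\overline{X}[/]\overline{S}[})$. What you need there is that the base change morphism
$$ \mathrm{sp}^*\mathbf{R}^qf_*\cur{F}\longrightarrow\mathbf{R}^qf_{K*}\,\mathrm{sp}^*\cur{F} $$
is an isomorphism for coherent $\cur{O}_{\frak{X}}(^\dagger T)_\Q$-modules $\cur{F}$, i.e.\ that $\mathrm{sp}^*$ commutes with the derived push-forward; you assert this rather than prove it. This is the genuine content of the lemma beyond Lemma \ref{compconv}: the left-hand side is a Zariski push-forward on the formal scheme $\frak{X}$, the right-hand side a push-forward on the rigid space $\frak{X}_K$ (in effect on a cofinal system of strict neighbourhoods of $]X[_{\frak{X}}$), and after localising on $\frak{P}$ the morphism $f:\frak{X}\rightarrow\frak{S}$ is no longer proper, so no proper base change or GAGA-type theorem applies off the shelf. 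The paper devotes the entire second half of its proof to precisely this point: it uses Proposition 4.4.5 of \cite{Ber96a} to write $\cur{F}$ as a filtered colimit of coherent $\cur{B}_{\frak{X}}(T,r)$-modules, commutes $\mathrm{sp}^*$, $\mathbf{R}f_*$ and $\mathbf{R}f_{K*}$ with filtered colimits ($f$ and $f_K$ being quasi-compact), replaces $\frak{X}$ by $\mathrm{Spf}(\cur{B}_{\frak{X}}(T,r))$ to reduce to coherent $\cur{O}_{\frak{X}}$-modules, and finally localises to the affine case and $q=0$, where the claim is a direct calculation.

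By contrast, the ``main obstacle'' you single out---the compatibility of the $\cur{O}(^\dagger D)$-linear and $\cur{O}$-linear de Rham complexes---is comparatively harmless bookkeeping and is not where the difficulty sits. Your conclusion that ``once this compatibility is in hand the isomorphism of connections is forced'' is therefore not justified: the comparison between formal and rigid-analytic higher direct images described above is still owed, and without it the argument does not close.
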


\begin{remark} \begin{enumerate} \item Note again that by Lemma \ref{annoying}, we may view $\cur{H}^{q-d}(f_+\tilde{E})$ as a coherent $\cur{D}^\dagger_{\frak{S},\Q}(^\dagger D)$-module (where $D=\tilde{D}\cap \frak{S}$), and hence as an $\cur{O}_{\frak{S}}(^\dagger D)_\Q$-module with integrable connection. Using the morphism of ringed spaces
$$ \mathrm{sp}: (\frak{S}_K,j_S^\dagger\cur{O}_{\frak{S}_K}) \rightarrow (\frak{S},\cur{O}_\frak{S}(^\dagger D)_\Q)
$$
we may therefore view $\mathrm{sp}^*\cur{H}^{q-d}(f_+\tilde{E})$ as a $j^\dagger_S\cur{O}_{\frak{S}_K}$-module with integrable connection, and the statement of the lemma makes sense.
\item Note that the hypotheses imply that both pairs $(S,\overline{S})$ and $(X,\overline{X})$ are properly $d$-realisable, and hence we are in the situation where Theorem \ref{cfc} holds.
\end{enumerate}
\end{remark}

\begin{proof} Exactly as in the proof of Lemma \ref{compconv}, we may embed $\overline{X}$ into a smooth and proper formal $\frak{S}$ scheme $\frak{P}$, and then localise on $\frak{P}$ to assume that we have a smooth morphism $f:\frak{X}\rightarrow \frak{S}$ lifting $f:\overline{X}\rightarrow \overline{S}$ (although $\overline{X}$ will no longer be proper). Let $D=\tilde{D}\cap \frak{S}$ and $T=f^{-1}D$. We may thus, using the explicit description of $\mathrm{sp}_{(X,\overline{X}),+}$ on page \pageref{sp2}, make the identifications
$$ \tilde{E}\cong \mathrm{sp}_*E, \;\; E \cong \mathrm{sp}^*\tilde{E}
$$
where $\mathrm{sp}:\frak{X}_K\rightarrow \frak{X}$ is the specialisation map. Here again $\mathrm{sp}^*$ refers to module pullback via the morphism
$$
(\frak{X}_K,j_X^\dagger\cur{O}_{\frak{X}_K}) \rightarrow (\frak{X},\cur{O}_{\frak{X}}(^\dagger T)_\Q)
$$
of ringed spaces. Hence using (4.3.6) of \cite{Ber02} again we get a canonical isomorphism
$$ \cur{H}^{q-d}(f_+\tilde{E}) \cong \mathbf{R}^qf_*( \tilde{E} \otimes \Omega^*_{\frak{X}/\frak{S}})
$$
of $\cur{O}_{\frak{S}}(^\dagger D)_\Q$-modules with integrable connection. By using the spectral sequence for a complex, and the identification
$$ \mathrm{sp}^*(\tilde{E}\otimes_{\cur{O}_{\frak{X},\Q}} \Omega^*_{\frak{X}/\frak{S}}) \cong E \otimes_{\cur{O}_{\frak{X}_K}} \Omega^*_{\frak{X}_K/\frak{S}_K},
$$
it therefore suffices to show that for any coherent $\cur{O}_{\frak{S}}(^\dagger D)_\Q$-module $\tilde{E}$, the base change morphism
$$ \mathrm{sp}^*\mathbf{R}^qf_*\tilde{E} \rightarrow \mathbf{R}^qf_{K*} \mathrm{sp}^*\tilde{E} 
$$
is an isomorphism. Actually, since overconvergent isocrystals extend to some strict neighbourhood of $]X[_\frak{X}$, we may by Proposition 4.4.5 of \cite{Ber96a} assume that there exists some $r$ such that $\tilde{E}$ comes from a coherent $\cur{B}_\frak{X}(T,r_0)_\Q$-module for some $r_0\geq0$, i.e. we have
$$ \tilde{E}\cong (\mathrm{colim}_{r\geq r_0} E_r)_\Q
$$
for coherent $\cur{B}_\frak{X}(T,r)$-modules $E_r$. (These $\cur{B}_\frak{X}(T,r)$ are essentially formal models for the ring of functions on a certain cofinal system of neighbourhoods of $]X[_\frak{X}$ inside $\frak{X}_K$, for more details see \S4 of \cite{Ber96a}.) Since $\mathrm{sp}^*,\mathbf{R}f_*$ and $\mathbf{R}f_{K*}$ commute with filtered direct limits ($f$ and $f_K$ are both quasi-compact), we can therefore reduce to the case of a coherent $\cur{B}_\frak{X}(T,r)$-module $E$. But now we may replace $\frak{X}$ by the relative spectrum $\mathrm{Spf}(\cur{B}_\frak{X}(T,r))$, it therefore suffices to treat the case of a coherent $\cur{O}_\frak{X}$-module. By further localising on $\frak{X}$ we may assume it to be affine, whence it suffices to treat the case $q=0$. This then follows by direct calculation.
\end{proof}

Of course, as with Lemma \ref{compconv}, there exists a version with Frobenius, and we easily arrive at the following.

\begin{corollary} \label{bp} In the situation of Conjecture B1(F), assume that $\frak{S}$ is smooth, $\overline{S}=\frak{S}_0$ and that the induced morphism $\overline{X}\rightarrow \overline{S}$ is smooth and projective. Then Conjecture B1(F) holds.
\end{corollary}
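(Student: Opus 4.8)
The plan is to reduce to the hypotheses of the preceding Lemma and then feed the result into Caro's Theorem~\ref{cfc}, much as in the proof of Corollary~\ref{ofp}, only with fewer reduction steps since here $\frak{S}$ is already assumed smooth and $\overline{X}\to\overline{S}$ already assumed smooth and projective. First I would note that it is enough to produce the overconvergent isocrystal $\mathbf{R}^qf_{\rig*}E$ Zariski-locally on $\overline{S}$ and on $S$: the realisation functor $\mathrm{Isoc}^\dagger((S,\overline{S})/K)\to\mathrm{MIC}((S,\overline{S},\frak{S})/K)$ is fully faithful, so uniqueness is automatic and ``lying in its essential image'' is a local condition since $(F\textrm{-})\mathrm{Isoc}^\dagger$ is local on both $S$ and $\overline{S}$, and this is compatible with the restriction behaviour of $\mathbf{R}^qf_{\frak{S},\rig*}E$ under shrinking the base frame. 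Passing to connected components I would assume $\overline{X}\to\overline{S}$ has constant relative dimension $d$; localising on $\overline{S}$, assume $\frak{S}$ smooth and affine. For $s\in S$, since $\overline{S}$ is affine I can pick $g$ on $\overline{S}$ vanishing on $\overline{S}\setminus S$ but not at $s$; replacing $S$ by $D(g)\subseteq S$ and $X,\overline{X}$ accordingly — the square remains Cartesian and $\overline{X}\to\overline{S}$ remains smooth and projective — and using locality on $S$, I reduce to $S=\overline{S}\setminus\tilde D$ with $\tilde D=V(g)$; adjoining $g$ as an extra coordinate to a closed embedding $\frak{S}\hookrightarrow\widehat{\A}^n_\cur{V}$ and composing with $\widehat{\A}^{n+1}_\cur{V}\hookrightarrow\widehat{\P}^{n+1}_\cur{V}$, I may take $\tilde D$ to be the trace of a coordinate hyperplane. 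This puts us exactly in the situation of the preceding Lemma; in particular $\overline{S}=\frak{S}_0$, hence $\overline{X}$ and $X$, are smooth over $k$, and by the remark following that Lemma both $(S,\overline{S})$ and $(X,\overline{X})$ are properly $d$-realisable.

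With this in place, I would let $\tilde E=\mathrm{sp}_{(X,\overline{X}),+}E\in D^b_\mathrm{isoc}(\cur{D}^\dagger_{(X,\overline{X})/K})$ be the arithmetic $\cur{D}$-module attached to $E$. Theorem~\ref{cfc} gives $f_+\tilde E\in D^b_\mathrm{isoc}(\cur{D}^\dagger_{(S,\overline{S})/K})$, so $\cur{H}^{q-d}(f_+\tilde E)$ is an overcoherent isocrystal on $(S,\overline{S})/K$, and I set $\mathbf{R}^qf_{\rig*}E:=\mathrm{sp}_{(S,\overline{S}),+}^{-1}\cur{H}^{q-d}(f_+\tilde E)\in\mathrm{Isoc}^\dagger((S,\overline{S})/K)$. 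By the explicit description of $\mathrm{sp}_{(S,\overline{S}),+}$ on page~\pageref{sp2}, whose quasi-inverse is the ringed-space pullback $\mathrm{sp}^*$, the realisation of $\mathbf{R}^qf_{\rig*}E$ on $(S,\overline{S},\frak{S})$ is $\mathrm{sp}^*\cur{H}^{q-d}(f_+\tilde E)$, which the preceding Lemma identifies canonically with $\mathbf{R}^qf_{\frak{S},\rig*}E$ as a $j_S^\dagger\cur{O}_{]\overline{S}[_\frak{S}}$-module with integrable connection. So $\mathbf{R}^qf_{\frak{S},\rig*}E$ is coherent, its connection overconvergent, and it arises from $\mathbf{R}^qf_{\rig*}E$, uniquely by full faithfulness. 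The canonicity of the Lemma's isomorphism and the compatibility of $f_+$ with extraordinary pullback along open immersions (the base-change clause of Theorem~\ref{cfc}) let the local objects glue to a global $\mathbf{R}^qf_{\rig*}E\in\mathrm{Isoc}^\dagger((S,\overline{S})/K)$; and when $\overline{S}$ is proper, dependence only on $f:X\to S$ and $E$ is automatic, since then $\mathrm{Isoc}^\dagger((S,\overline{S})/K)$ depends only on $S$. The case with Frobenius structures runs identically, using the Frobenius version of the preceding Lemma and the Frobenius case of Theorem~\ref{cfc}.

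Granting the preceding Lemma and Theorem~\ref{cfc}, the substance of the argument is formal, so the real work — and the place I would expect to have to be most careful — is the reduction: arranging simultaneously that $\frak{S}$ is affine, that $S$ is a hyperplane-section complement in $\overline{S}$, and that both pairs are properly $d$-realisable, while making sure that ``lying in the essential image of the realisation functor'' really is local on $S$ and $\overline{S}$ and that the locally constructed isocrystals genuinely glue, so that these local reductions are legitimate.
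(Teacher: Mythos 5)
Your proposal is correct and follows essentially the same route as the paper: the paper's own proof is simply ``entirely similar to the proof of Corollary \ref{ofp}'', i.e.\ localise to reach the hypotheses of the preceding comparison lemma, apply Caro's Theorem \ref{cfc} to get $f_+\tilde{E}\in D^b_\mathrm{isoc}$, and use the lemma plus full faithfulness of the realisation functor to descend and glue. Your write-up just makes explicit the reduction and gluing steps that the paper leaves implicit.
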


\begin{proof} Entirely similar to the proof of Corollary \ref{ofp}.
\end{proof}

\begin{remark} Note that by using Th\'{e}or\`{e}me 4.4.2 of \cite{Car15a} and Proposition 4.1.8 of \cite{Car09a} we also get a base change statement for morphisms $(T,\overline{T},\frak{T}) \rightarrow (S,\overline{S},\frak{S})$ where  $(T,\overline{T},\frak{T})$ also satisfies the hypotheses of the corollary.
\end{remark}

\section*{Acknowledgements}

The author would like to thank Ambrus P\'{a}l, whose interest in the current status of Berthelot's conjecture (in particular, the requirement of a result along the lines of Corollary \ref{ofp} to be used in \cite{Pal15b}) lead to the writing of this article. The author was supported by an HIMR fellowship.

\bibliographystyle{mysty}
\bibliography{/Users/Chris/Dropbox/LaTeX/lib.bib}

\providecommand{\bysame}{\leavevmode\hbox to3em{\hrulefill}\thinspace}
\providecommand{\MR}{\relax\ifhmode\unskip\space\fi MR }
\providecommand{\MRhref}[2]{%
  \href{http://www.ams.org/mathscinet-getitem?mr=#1}{#2}
}
\providecommand{\href}[2]{#2}
\begin{thebibliography}{Ber96b}

\bibitem[Ber86]{Ber86}
P.~Berthelot, \emph{G\'{e}om\'{e}trie rigide et cohomologie des
  vari\'{e}t\'{e}s alg\'{e}briques de caract\'{e}ristique {$p$}}, Mem. Soc.
  Math. France (1986), no.~23, 7--32.

\bibitem[Ber96a]{Ber96b}
\bysame, \emph{Cohomologie rigide et cohomologie ridige {\`a} supports propres,
  premi{\`e}re partie}, preprint (1996).

\bibitem[Ber96b]{Ber96a}
\bysame, \emph{$\cur{D}$-modules arithm{\'e}tique {I}. {O}p{\'e}rateurs
  diff{\'e}rentiels de niveau fini}, Ann. Sci. Ecole. Norm. Sup. \textbf{29}
  (1996), 185--272.

\bibitem[Ber02]{Ber02}
\bysame, \emph{Introduction {\`a} la th{\'e}orie arithm{\'e}tique des
  {$\mathscr{D}$}-modules}, Cohomologies {$p$}-adiques et applications
  arithm{\'e}tiques, II, no. 279, Asterisque, 2002, pp.~1--80.

\bibitem[Car04]{Car04}
D.~Caro, \emph{$\cur{D}$-modules arithm\'{e}tiques surcoh\'{e}rents.
  {A}pplications aux fonctions {L}}, Ann. Inst. Fourier, Grenoble \textbf{54}
  (2004), no.~6, 1943--1996.

\bibitem[Car09]{Car09a}
\bysame, \emph{$\cur{D}$-modules arithm\'{e}tiques associ\'{e}s aux isocristaux
  surconvergents. {C}as lisse}, Bulletin de la S.M.F. \textbf{137} (2009),
  no.~4, 453--543.

\bibitem[Car11]{Car11}
\bysame, \emph{Pleine fid\'{e}lit\'{e} sans structure de {F}robenius et
  isocristaux partiellement surconvergents}, Math. Ann. \textbf{349} (2011),
  no.~4, 747--805.

\bibitem[Car15]{Car15a}
\bysame, \emph{Sur la pr\'eservation de la surconvergence par l'image directe
  d'un morphisme propre et lisse}, Ann. Sci. \'Ec. Norm. Sup\'er. (4)
  \textbf{48} (2015), no.~1, 131--169.

\bibitem[CT03]{CT03}
B.~Chiarellotto and N.~Tsuzuki, \emph{Cohomological {D}escent of {R}igid
  {C}ohomology for {E}tale {C}overings}, Rend. Sem. Mat. Univ. Padova
  \textbf{109} (2003), 63--215.

\bibitem[CT14]{CT14}
\bysame, \emph{Clemens-{S}chmid exact sequence in characteristic {$p$}}, Math.
  Ann. \textbf{358} (2014), no.~3-4, 971--1004.

\bibitem[ES15]{ES15b}
H.~Esnault and A.~Shiho, \emph{Chern classes of crystals}, preprint (2015),
  arXiv:math/1511.06874.

\bibitem[{\'{E}}te02]{Ete02}
J.-Y. {\'{E}}tesse, \emph{Descente \'etale des {$F$}-isocristaux surconvergents
  et rationalit\'e des fonctions {$L$} de sch\'emas ab\'eliens}, Ann. Sci.
  \'Ecole Norm. Sup. (4) \textbf{35} (2002), no.~4, 575--603.

\bibitem[{\'{E}}te12]{Ete12}
\bysame, \emph{Images directes {I}: {E}spaces rigides analytiques et images
  directes}, J. Th\'eor. Nombres Bordeaux \textbf{24} (2012), no.~1, 101--151.

\bibitem[LS07]{LS07}
B.~Le~Stum, \emph{Rigid cohomology}, Cambridge Tracts in Mathematics, vol. 172,
  Cambridge University Press, 2007.

\bibitem[MT04]{MT04}
S.~Matsuda and F.~Trihan, \emph{Image directe sup\'erieure et unipotence}, J.
  Reine Angew. Math. \textbf{469} (2004), 47--54.

\bibitem[Ogu84]{Ogu84}
A.~Ogus, \emph{{$F$}-isocrystals and de {R}ham cohomology {II} - {C}onvergent
  isocrystals}, Duke Math. J. \textbf{51} (1984), no.~4, 765--850.

\bibitem[P{\'{a}}l15]{Pal15b}
A.~P{\'{a}}l, \emph{The {$p$}-adic monodromy group of abelian varieties over
  global function fields in charcateristic {$p$}}, preprint (2015),
  arXiv:math/1512.03587.

\bibitem[Shi08]{Shi08a}
A.~Shiho, \emph{Relative log convergent cohomology and relative rigid
  cohomology {I}}, preprint (2008), arXiv:math/0707.1742v2.

\bibitem[Tsu03]{Tsu03}
N.~Tsuzuki, \emph{On base change theorem and coherence in rigid cohomology},
  Doc. Math. (2003), no.~Extra Vol., 891--918 (electronic), Kazuya Kato's
  fiftieth birthday.

\end{thebibliography}
  
\end{document}